\DeclareMathOperator*{\tend}{\longrightarrow}
\DeclareMathOperator*{\D}{\rm{div}}
\DeclareMathOperator*{\wtend}{\rightharpoonup}
\theoremstyle{definition}
\newtheorem{defi}{Definition}[section]
\newtheorem{rmk}[defi]{Remark}
\theoremstyle{plane}
\newtheorem{thm}[defi]{Theorem}
\newtheorem{prop}[defi]{Proposition}
\newtheorem{cor}[defi]{Corollary}
\newtheorem{lemma}[defi]{Lemma}
\newcommand{\tbf}{\textbf}
\newcommand{\tsl}{\textsl}
\newcommand{\mbb}{\mathbb}
\newcommand{\mc}{\mathcal}
\newcommand{\mf}{\mathfrak}
\newcommand{\veps}{\varepsilon}
\newcommand{\eps}{\varepsilon}
\newcommand{\what}{\widehat}
\newcommand{\vphi}{\varphi}
\newcommand{\ra}{\rightarrow}
\newcommand{\g}{\gamma}
\renewcommand{\k}{\kappa}
\newcommand{\s}{\sigma}
\renewcommand{\t}{\tau}
\newcommand{\z}{\zeta}
\newcommand{\de}{\delta}
\newcommand{\R}{\mathbb{R}}
\newcommand{\Q}{\mathbb{Q}}
\newcommand{\N}{\mathbb{N}}
\newcommand{\Z}{\mathbb{Z}}
\newcommand{\T}{\mathbb{T}}
\renewcommand{\P}{\mathbb{P}}
\renewcommand{\div}{{\rm div}\,}
\newcommand{\curl}{{\rm curl}\,}
\newcommand{\Id}{{\rm Id}\,}
\newcommand{\Supp}{{\rm Supp}\,}
\newcommand{\du}{\delta u_\varepsilon}
\newcommand{\dr}{\delta r_\varepsilon}
\newcommand{\db}{\delta b_\varepsilon}
\newcommand{\dx}{ \, {\rm d} x}
\newcommand{\dt}{ \, {\rm d} t}
\newcommand{\B}{B^s_{p, r}}
\newcommand{\al}{\alpha}
\newcommand{\bt}{\beta}
\def\d{\partial}
\def\div{{\rm div}\,}
\begin{document}

\newcommand{\dimitri}[1]{\textcolor{red}{[***DC: #1 ***]}}
\newcommand{\fra}[1]{\textcolor{blue}{[***FF: #1 ***]}}

\title{\textsc{\Large{\textbf{Rigorous derivation and well-posedness of a \\
quasi-homogeneous ideal MHD system}}}}

\author{\normalsize\textsl{Dimitri Cobb}$\,^1\qquad$ and $\qquad$
\textsl{Francesco Fanelli}$\,^{2}$ \vspace{.5cm} \\
\footnotesize{$\,^{1,} \,^2\;$ \textsc{Universit\'e de Lyon, Universit\'e Claude Bernard Lyon 1}}  \vspace{.1cm} \\
{\footnotesize \it Institut Camille Jordan -- UMR 5208}  \vspace{.1cm}\\
{\footnotesize 43 blvd. du 11 novembre 1918, F-69622 Villeurbanne cedex, FRANCE} \vspace{.2cm} \\
\footnotesize{$\,^{1}\;$\ttfamily{cobb@math.univ-lyon1.fr}}, $\;$
\footnotesize{$\,^{2}\;$\ttfamily{fanelli@math.univ-lyon1.fr}}
\vspace{.2cm}
}

\date\today

\maketitle

\subsubsection*{Abstract}
{\footnotesize The goal of this paper is twofold. On the one hand, we introduce a quasi-homogeneous version of the classical ideal MHD system and study its well-posedness
in critical Besov spaces $B^s_{p,r}(\R^d)$, $d\geq2$, with $1<p<+\infty$ and under the Lipschitz condition $s>1+d/p$ and $r\in[1,+\infty]$, or $s=1+d/p$ and $r=1$.
A key ingredient is the reformulation of the system \tsl{via} the so-called Els\"asser variables.
On the other hand, we give a rigorous justification of quasi-homogeneous MHD models, both in the ideal and in the dissipative cases: when $d=2$, we will derive them
from a non-homogeneous incompressible MHD system with Coriolis force, in the regime of low Rossby number and for small density variations around a constant state.
Our method of proof relies on a relative entropy inequality for the primitive system, and yields
precise rates of convergence, depending on the size of the initial data, on the order of the Rossby number and on the regularity of the viscosity and resistivity coefficients.
}

\paragraph*{\small 2010 Mathematics Subject Classification:}{\footnotesize 35Q35 
(primary);
76W05, 
35B40, 
76B03, 
35L60 
(secondary).}

\paragraph*{\small Keywords: }{\footnotesize quasi-homogeneous ideal MHD; Els\"asser variables; critical regularity; singular perturbation; low Rossby number; relative entropy inequality.}


\section{Introduction} \label{s:intro}

Magnetohydrodynamic equations are used to model conducting fluids which are subject to a self-generated magnetic field. This can be used to describe the dynamics of molten metal,
certain types of plasmas or electrolytes. More precisely, MHD equations are an accurate description of a fluid provided that:
\begin{enumerate}[(i)]
\item the fluid is non-relativistic, \tsl{i.e.} the characteristic speed of the fluid is small when compared to the speed of light; in this regime, the electrostatic approximation is valid;
\item the fluid is highly collisional, so that the hydrodynamic approximation holds.
\end{enumerate}
We refer to Section 2.6 of \cite{Bellan} for additional details, as well as for a (formal) derivation of the MHD system from the Vlasov equations.

The main focus of this article is the investigation of some mathematical questions related to the following \emph{quasi-homogeneous incompressible ideal MHD system}: 
\begin{equation}\label{i_eq:MHD-I}
\begin{cases}
\partial_t R + \D \big( RU \big) = 0\\[1ex]
\partial_t U + \D(U \otimes U - B \otimes B) + R \mathfrak{C}U + \nabla \left( \Pi + \dfrac{1}{2} |B|^2 \right) = 0\\[1ex]
\partial_t B + \D(U \otimes B - B \otimes U) = 0\\[1ex]
\D(U) = \div(B) = 0.
\end{cases}
\end{equation}
The previous system is set in $\R_+\times\R^d$, $d\geq2$, but the whole study can be performed also in the case of the flat torus $\T^d$ with minor modifications.
The unknowns consist of the velocity field $U$, the hydrodynamic pressure $\Pi$ and the magnetic field $B$ of the fluid, as well as a scalar function $R$ which plays the role of a density perturbation function.
In the above, $\mathfrak{C} \in\mc  M_d(\mathbb{R})$ is a constant $d \times d$ matrix.

The MHD system \eqref{i_eq:MHD-I} is appropriate for ideal incompressible magnetofluids which are subject to forces that depend linearly on $U$
and on small variations $R$ in the fluid density. This type of system is especially relevant for fluids evolving in a rotating frame of coordinates, such as geophysical or stellar fluids.
As a matter of fact, one of the main goals of this paper is to show how quasi-homogeneous MHD systems (both in the dissipative and ideal cases) arise naturally in the study of fast rotating fluids.
In particular, by using a relative entropy method, we prove that (regular enough) solutions of \eqref{i_eq:MHD-I} are in fact limits of solutions of a fully non-homogeneous MHD system undergoing fast rotation,
and we give an explicit speed of convergence. 
However, such an asymptotic result rests on a well-posedness theory for our quasi-homogeneous ideal MHD system: this is the second main goal of the present article.
We will study existence and uniqueness questions about solutions to \eqref{i_eq:MHD-I} in the framework of \emph{critical} Besov spaces, embedded in the set of globally Lipschitz functions.

Let us comment more in detail about both issues, well-posedness and rigorous derivation of equations \eqref{i_eq:MHD-I}, in Subsections \ref{ss:i_ideal} and \ref{ss:i_rig} respectively.

\subsection{Remarks concerning the ideal MHD} \label{ss:i_ideal}

While it is tempting to consider system \eqref{i_eq:MHD-I} a mere variation of the Euler equations, ideal MHD has a number of intriguing features that are entirely its own,
such as additional symmetries and conserved quantities (magnetic and cross helicities, for example).
Even in the case $R\equiv0$, the magnetic field is not a trivial addition to the problem, as is shown by the existence
of interesting and physically relevant static solutions (with $U \equiv 0$), which present their own mathematical challenges (see \textsl{e.g.} Section 9.8.3 of \cite{Bellan}
concerning the Grad-Shafranov equation). 

Even from a mathematical perspective, the added complexity is not simply computational, as can be seen by the fact that global existence of regular solutions remains unknown for ideal MHD,
even in two dimensions of space. This is, of course, in stark contrast with the situation of $2$-D homogeneous (\tsl{i.e.} with $R\equiv0$) non-conducting (\tsl{i.e.} with $B \equiv 0$) fluids.

\subsubsection{Previous well-posedness results}

Despite these difficulties, a number of results have been obtained concerning local in time well-posedness of the classical ideal MHD, \tsl{i.e.} system \eqref{i_eq:MHD-I} with $R\equiv0$. 

Firstly, well-posedness has been established in \cite{Sch} in Sobolev spaces $W^{m, 2} = H^m$ when the fluid is confined to a bounded domain with perfectly conducting boundary,
provided that the regularity exponent meets the condition $m > 1+d/2$. Notice that such a constraint on the regularity index is classical in hyperbolic problems.
That result was later extended in \cite{Secc} to all Lebesgue exponents $p > 1$,
as long as the regularity exponent of the Sobolev space $W^{m, p}$ satisfies $m > 1 + d/p$.
More recently, in order to reach the critical exponent $m_* = 1 + d/p$, well-posedness has been proved in the framework of critical Besov spaces \cite{MY}, Triebel-Lizorkin spaces \cite{C-M-Z} as well as Besov-Morrey spaces \cite{XZ}.

For these local solutions, several blow-up criteria have been obtained, much in the spirit of that of Beale-Kato-Majda \cite{BKM} for the Euler equations. More precisely, as proved in \cite{CKS}
assuming $R = 0$ in \eqref{i_eq:MHD-I}, a regular solution $(U, B)$ of the ideal MHD system defined for times $0 \leq t < T$ can be extended beyond $T$ if and only if
\begin{equation*}
\int_0^T \Big\{ \| \curl (U) \|_{L^\infty} + \| \curl(B) \|_{L^\infty} \Big\} \dt < +\infty\,.
\end{equation*}
Several improvements have been achieved in this direction: \cite{LZ} allows the $L^\infty$ norms to be replaced by the Besov norms $\dot{B}^0_{\infty, \infty}$, and this result was in turn improved by \cite{CCM}. The $L^\infty$ norms can also be replaced by the Triebel-Lizorkin norms $\dot{F}_{\infty, \infty}^0$, as shown in \cite{C-M-Z}

Finally, we point out that some results exist in the direction of paradoxical solutions. These solutions have finite energy (they are in fact $H^\beta$, for some $\beta > 0$)
but do not preserve magnetic helicity \cite{BBV}. We also refer to \cite{FLS} for the existence of bounded solutions that violate conservation of energy and cross helicity. Generally speaking, this means that ``reasonable'' solutions must be of some regularity.

\subsubsection{The Els\"asser variables}

The local well-posedness results in \cite{Sch}, \cite{Secc}, \cite{MY}, \cite{C-M-Z} and \cite{XZ} rely on the fact that the ideal MHD equations can be symmetrised by the following change of variables:
\begin{equation*}
\al = U + B \qquad \text{and} \qquad \bt = U - B.
\end{equation*}
These quantities are called \textit{Els\"asser variables}, and have been used by physicists\footnote{The Els\"asser variables, often called ``characteristic variables'',
are usually denoted by $z_\pm$ is the physical literature, as well as in some mathematical works.}
since the 1950s, mainly to study the propagation of magnetohydrodynamic waves in the linearised equations. Their main advantage is that they solve a much more symmetric system, namely
\begin{equation}\label{i:Elsasser}
\begin{cases}
\partial_t \al + (\bt \cdot \nabla) \al + \dfrac{1}{2}R \mathfrak{C} (\al + \bt) = - \nabla\pi\\[1ex]
\partial_t \bt + (\al \cdot \nabla) \bt + \dfrac{1}{2}R \mathfrak{C} (\al + \bt) = - \nabla\pi\\[1ex]
\D(\al) = \D(\bt) = 0\,,
\end{cases}
\end{equation}
where, here and throughout this paper, we have defined the magnetohydrodynamic pressure
\begin{equation} \label{eq:p-P}
\pi\,:=\,\Pi + \dfrac{1}{2} |B|^2\,.
\end{equation}
In addition to being more symmetric, the equations in \eqref{i:Elsasser} are transport equations, and this fact is instrumental for proving local well-posedness, although the Els\"asser variables do not appear explicitly in \cite{MY} and \cite{XZ} (instead, these papers feature compositions by the flows of $\al$ and $\bt$).

All we have done to obtain \eqref{i:Elsasser} is a simple change of variables; however, several interesting remarks can be made concerning the Els\"asser formulation.
As explained in \cite{Sch} and below (and much more thoroughly in \cite{Cobb-F_Els}), there is no need to make the \textsl{a priori} assumption that the gradient terms appearing in the right-hand
sides of \eqref{i:Elsasser} are equal (see \eqref{eq:MHDab} below). Instead, they can be assumed to be \emph{independent} Lagrange multipliers associated with the two (independent)
divergence-free constraints $\D(\al) = \D(\bt) = 0$. The fact that they end up being equal can be seen as an \textsl{a fortiori} consequence of the structure of the equations. 

In this article, we make intensive use of this fact. By treating the gradient terms in \eqref{i:Elsasser} as independent, we are able to project \eqref{i:Elsasser} on the space of
divergence-free functions with the Leray projector $\mathbb{P} = \text{Id}+ \nabla (- \Delta)^{-1}\D$. This  highlights the fact that, fundamentally, \eqref{i_eq:MHD-I} is a system of transport equations. 
We refer to Subsection \ref{ss:Elsasser} below for details.

The identification of the underlying transport structure \eqref{i:Elsasser} allows us to leave the strict setting of quasi-linear hyperbolic systems and deal with Lebesgue exponents $p \neq 2$.
Owing to this, we perform our study of the well-posedness of equations \eqref{i_eq:MHD-I} in the framework of Besov spaces $\B(\R^d)$, whose regularity indices satisfy the Lipschitz condition
\begin{equation}\label{i_eq:Lip}
s>1+\frac{d}{p}\quad \text{ and }\quad r\in [1,+\infty]\,,\qquad\qquad\text{ or }\qquad\qquad s=1+\frac{d}{p} \quad \text{ and }\quad r=1\,.
\end{equation}
However, using the Leray projector introduces problems if we want to handle the case where the Lebesgue exponent is $p = +\infty$ or $p = 1$, as $\mathbb{P}$ is not bounded on $L^1$,
and is even ill-defined as a Fourier multiplier on $L^\infty$.
Therefore, in all that follows, we restrict our attention to the case
\begin{equation}\label{i_eq:p}
1 < p < +\infty\,.
\end{equation}

The case $p=+\infty$ is left aside, and will be the matter of a forthcoming paper \cite{Cobb-F_Els}. There are several reasons for that.
First of all, as already said, the Leray projector $\P$ is not well-defined in $L^\infty$-type spaces; therefore, treating the endpoint case $p=+\infty$ requires a different
approach than the one used here. In addition, in absence of some global integrability conditions, it is not clear for us that the original formulation \eqref{i_eq:MHD-I} and the Els\"asser formulation \eqref{i:Elsasser} are equivalent. In the same way, without assuming some integrability for the solutions, the equivalence between each of those formulations and the corresponding system obtained after application of the operator $\P$ also seems questionable. In \cite{Cobb-F_Els} we will explore that issue in great detail, and show sufficient conditions in order for those equivalences to hold.

In passing, we mention also that, when $p=+\infty$ and in the case of space dimension $d=2$, it is possible to show an improved lower bound on the lifespan of the solutions
to the quasi-homogeneous ideal MHD system \eqref{i_eq:MHD-I}, so also for the classical ideal MHD system (when $R\equiv0$). This improved lower bound implies an ``asymptotically global''
well-posedness result: roughly speaking, if both the initial data $R_0$ and $B_0$ are of order $\veps>0$, then the lifespan $T_\veps$ of the corresponding solution satisfies
\begin{equation*}
T_\veps\, \tend_{\veps \rightarrow 0^+}\, +\infty\,.
\end{equation*}
Notice that this property does not follow from standard lower bounds coming from classical hyperbolic theory.

To conclude this part, we remark that, in a purely $L^\infty$ framework (that is, without any additional integrability condition), uniqueness of solutions to \eqref{i_eq:MHD-I}
is \emph{not} true in general, while uniqueness in the same setting do hold after projection onto the space of divergence-free vector fields (see \tsl{e.g.} \cite{MY}).
This issue has to be related to the failure of the equivalence between the two formulations, which we have mentioned above.

\subsection{Quasi-homogeneous MHD and fast rotating fluids} \label{ss:i_rig}

The second part of this article is dedicated to the rigorous derivation of the quasi-homogeneous ideal MHD system \eqref{i_eq:MHD-I}. The convergence will be shown
in the case of space dimension $d=2$, from a non-homogeneous fast rotating problem, namely
\begin{equation}\label{i:MHDnHom}
\begin{cases}
\partial_t \rho + \D(\rho u) = 0\\[1ex]
\partial_t (\rho \, u) + \D (\rho \, u \otimes u - b \otimes b) + \nabla \left( \dfrac{1}{\veps}  \Pi + \dfrac{1}{2} |b|^2 \right) + \dfrac{1}{\veps} \rho u^\perp = 
h(\veps) \D \big( \nu(\rho) \nabla u \big)\\[1ex]
\partial_t b + \D (u \otimes b - b \otimes u) = h(\veps) \curl \big( \mu(\rho) \curl (b) \big)\\[1ex]
\D(u) = \D(b) = 0\,.
\end{cases}
\end{equation}
This problem is set in the plane $\mathbb{R}^2$, or in the flat torus $\T^2$. The quantities $h(\veps)\nu(\rho)$ and $h(\veps)\mu(\rho)$ are the viscosity and resistivity of the fluid.
The fluid evolves in a rotating frame of coordinates, whence the presence of a Coriolis force term
\begin{equation}\label{i:eqCor}
F_C = \frac{1}{\veps} \rho u^\perp = \frac{1}{Ro} \rho u^\perp 
\end{equation}
associated to the rotation speed $1/Ro$, where $Ro$ is the Rossby number of the fluid. This type of study is especially relevant for geophysical or stellar fluids, which lie at the surface
of a rotating celestial body. In these problems, the Coriolis force dominates the dynamics, this being reflected by the fact that the Rossby number is very low, namely $Ro = \veps \ll 1$.

In the regime of fast rotations, we see from \eqref{i:eqCor} that any non-homogeneity will be amplified by the Coriolis force. We therefore expect that even slight perturbations in the fluid
density will have an impact on the dynamics. Let us explain how this happens.
The only other force that is able to compensate the effect of fast rotation is, at the geophysical scale, the pressure force, which must therefore scale as $1/\veps$:
\begin{equation*}
\frac{1}{\veps} \rho u^\perp \approx \frac{1}{\veps} \nabla \Pi.
\end{equation*}
Assume now that the fluid is quasi-homogeneous, meaning that density is nearly constant, that is $\rho = 1 + \veps R$. Then, the Coriolis force reads
\begin{equation*}
F_C = \frac{1}{\veps} u^\perp + R u^\perp.
\end{equation*}
Because the fluid is incompressible, the dominant term $\veps^{-1} u^\perp$ in this equation is in fact the gradient of some function. Therefore, it does not appear in the weak form
of the incompressible MHD equations, and the Coriolis force only ends up contributing $Ru^\perp$ to the dynamics. 

\medskip

Notice that forcing terms of the form $RU^\perp$ are in fact covered by equations \eqref{i_eq:MHD-I}, provided that the matrix $\mathfrak{C}$ is given by
\begin{equation}\label{eq:MatrixC}
\mathfrak{C} = \left(
\begin{array}{cc}
0 & -1\\
1 & 0
\end{array}
\right).
\end{equation}
Therefore, from the rough discussion above, we expect some kind of convergence theorem, from solutions of \eqref{i:MHDnHom} to those of \eqref{i_eq:MHD-I}, supplemented with $\mathfrak{C}$
given by \eqref{eq:MatrixC}. For this to work, the fact that \eqref{i_eq:MHD-I} is an ideal MHD system makes it necessary to assume that the viscosity and resistivity both vanish in the limit $\veps \rightarrow 0^+$, or in other words $h(\veps) \rightarrow 0^+$.

Our goal, in the second part of this article, will be to give rigorous grounds to the previous heuristic argument. A rigorous convergence theorem has already been shown in \cite{Cobb-F}
in the case where the viscosity and resistivity remain non-degenerate $h(\veps) \equiv 1$, with a method based on compensated compactness. In that case there is weak convergence (in a suitable topology)
\begin{equation}\label{i:Conv}
\left( \frac{1}{\veps}(1 - \rho_\veps), u_\veps, b_\veps \right) \quad \wtend \quad (R, U, B), \qquad \qquad \text{for } \veps \rightarrow 0^+,
\end{equation}
of the solutions of \eqref{i:MHDnHom} to those of the \emph{dissipative} quasi-homogeneous system
\begin{equation}\label{i:MHDvisc}
\begin{cases}
\partial_t R + \D \big( RU \big) = 0\\[1ex]
\partial_t U + \D(U \otimes U - B \otimes B) + R U^\perp + \nabla \left( \Pi + \dfrac{1}{2} |B|^2 \right) = \nu(1) \Delta U\\[1ex]
\partial_t B + \D(U \otimes B - B \otimes U) = \mu(1) \Delta B\\[1ex]
\D(U) = \div(B) = 0.
\end{cases}
\end{equation}
However, since the convergence \eqref{i:Conv} is weak, there is no explicit speed of convergence. In addition, as the method used in \cite{Cobb-F} uses bounds for the solutions of \eqref{i:MHDnHom} in the energy space $L^\infty(L^2) \cap L^2 (\dot{H}^1)$, the case of vanishing viscosity and resistivity $h(\veps) \rightarrow 0^+$ is not handled.

In this paper, we use a \emph{relative entropy} inequality (see \cite{FJN} for more on this method) for problem \eqref{i:MHDnHom} to prove quantitative convergence results of the solutions
of \eqref{i:MHDnHom} in both cases: to those of \eqref{i:MHDvisc} when $h(\veps) \equiv 1$, and those of \eqref{i_eq:MHD-I} when $h(\veps) \rightarrow 0^+$.
What this amounts to is a structure theorem: solutions of \eqref{i:MHDnHom} are the sum of a (unique regular) solution of \eqref{i_eq:MHD-I}, or \eqref{i:MHDvisc},
plus a remainder whose $L^2$ norm has limit zero as $\veps \rightarrow 0^+$.

\subsection{Overview of the paper}
Let us conclude this introduction with a short overview of the paper.

We will start, in Section \ref{WP}, by studying the well-posedness of the quasi-homogeneous system \eqref{i_eq:MHD-I} in critical Besov spaces $\B(\R^d)$,
under conditions \eqref{i_eq:Lip} and \eqref{i_eq:p}. In this context, our main results concern the local existence and uniqueness of solutions, and a continuation criterion.
As anticipated in Subsection \ref{ss:i_ideal}, a fundamental step of the proof will consist in recasting the problem in Els\"asser variables.

In Section \ref{s:res-as}, we state our main asymptotic results, concerning the rigorous derivation of the quasi-homogeneous MHD systems \eqref{i_eq:MHD-I} and \eqref{i:MHDvisc}, together with a precise
rate of convergence of the solutions.
The proof of those results will be the matter of the forthcoming sections. More precisely,
Section \ref{s:entropy} is devoted to the proof of a general relative entropy inequality for the primitive system \eqref{i:MHDnHom},
which we use in Section \ref{s:rotation} to prove our main convergence results.

An appendix containing a few tools from Littlewood-Paley analysis and transport equations will end the article.

\paragraph{Notations and conventions.}

Before starting, let us introduce some useful notation we use throughout this text.

The space domain will be denoted by $\Omega\subset\R^d$, where $d\geq 2$.
All derivatives are (weak) derivatives, and the symbols $\nabla$, $\D$ and $\Delta$ are, unless specified otherwise, relative to the space variables. When $d=2$, we set $\nabla^\perp=(-\d_2,\d_1)$
and, for a vector field $v\in\R^2$, $\curl v\,=\,\nabla\times v\,=\,\d_1v^2-\d_2v^1$.
Given a subset $U\subset\Omega$ or $U\subset\R_+\times\Omega$,
we note $\mathcal{D}(U)$ the space of compactly supported $C^\infty$ functions on $U$.
If $f$ is a tempered distribution, we note $\mathcal{F}[f] = \what{f}$ the Fourier transform of $f$ with respect to the space variables.

For $n\geq 1$, we denote by $\mc M_n(\R)$ the set of $n\times n$ matrices with entries in $\R$.

For $1 \leq p \leq +\infty$, we will note $L^p(\Omega) = L^p$ when there is no ambiguity regarding the domain of definition of the functions. Likewise, we omit the dependency on $\Omega$ in functional spaces when no mistake can be made.
If $X$ is a Fr\'echet space of functions, we note $L^p(X) = L^p(\mathbb{R}_+ ; X)$. For any finite $T > 0$, we note $L^p_T(X) = L^p([0, T] ; X)$ and $L^p_T = L^p([0, T];\R)$.
When $X$ is Banach, we will denote by $C_w([0,T];X)$ the space of functions which are continuous in time with values in $X$ endowed with its weak topology.

Let $\big(f_\veps\big)_{\varepsilon > 0}$ be a sequence of functions in a normed space $X$. If this sequence is bounded in $X$,  we use the notation $\big(f_\veps\big)_{\veps > 0} \subset X$.
If $X$ is a topological linear space, whose (topological) dual is $X'$, we note $\langle \, \cdot \, | \, \cdot \, \rangle_{X' \times X}$ the duality brackets.

Any constant will be generically noted $C$, and, whenever deemed useful, we will specify the dependencies by writing $C = C(a_1, a_2, a_3, ...)$. We agree to write $A \lesssim B$ whenever there is an irrelevant constant $C > 0$ such that $A \leq C B$. 


\subsection*{Acknowledgements}

{\small
The work of the second author has been partially supported by the LABEX MILYON (ANR-10-LABX-0070) of Universit\'e de Lyon, within the program ``Investissement d'Avenir''
(ANR-11-IDEX-0007),  and by the projects BORDS (ANR-16-CE40-0027-01) and SingFlows (ANR-18-CE40-0027), all operated by the French National Research Agency (ANR).
}


\section{Well-posedness of the quasi-homogeneous ideal MHD system} \label{WP}

In this section, we state and prove our main results concerning the well-posedness theory of problem \eqref{i_eq:MHD-I} in critical Besov spaces $B^s_{p,r}$, under conditions \eqref{i_eq:Lip}
and \eqref{i_eq:p} on the indices.

\subsection{Statement of the well-posedness results} \label{ss:WP-res}

Here we state our main well-posedness results concerning problem \eqref{i_eq:MHD-I}. The first theorem is about
local in time existence and uniqueness of solutions for initial data in all Besov spaces contained in the space of Lipschitz functions,
so that the case of the critical spaces $B^{d/p + 1}_{p, 1}$ is covered by this theorem. Recall that $1<p<+\infty$ throughout this paper.

\begin{thm}\label{th:BesovWP}
Let $(s,p,r)\in\R\times\,]1,+\infty[\,\times[1,+\infty]$ such that $s \geq 1 + d/p$, with $r = 1$ if $s = 1 + d/p$. Let $(R_0,U_0, B_0,)$ be a set of initial data which lie in $\B(\R^d)$.

Then, there exists a time $T > 0$ such that problem \eqref{i_eq:MHD-I} has a unique solution $(R, U, B)$ in the class $C^0\big([0,T];\B(\R^d;\R^{1+2d})\big)$ if $r<+\infty$, in the class
$C^0_w\big([0,T];B^s_{p,\infty}(\R^d;\R^{1+2d})\big)$ if $r=+\infty$. Moreover, $\d_tR$, $\d_tU$ and $\d_tB$ belong to the space $C^0\big([0,T];B^{s-1}_{p,r}(\R^d;\R^d)\big)$, while
$\nabla\Pi$ belongs to $C^0\big([0,T];\B(\R^d)\big)$ (with the usual modification when $r=+\infty$).
Finally, if $T^*>0$ denotes the lifespan of this solution, then one has
\begin{equation*}
T^* \geq \frac{C}{\big\| (R_0, U_0, B_0) \big\|_{\B}}\,,
\end{equation*}
where $C > 0$ depends only on the dimension $d$ and on $(s, p, r)$.
\end{thm}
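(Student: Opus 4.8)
The plan is to use the Els\"asser formulation \eqref{i:Elsasser} to recast \eqref{i_eq:MHD-I} as a coupled system of \emph{transport equations}, and then to run the classical Littlewood--Paley scheme for such systems in the spaces $\B$. First I would pass to the Els\"asser variables $\al=U+B$ and $\bt=U-B$: summing and subtracting the momentum and magnetic equations in \eqref{i_eq:MHD-I} and using $\D U=\D B=0$ gives \eqref{i:Elsasser}, in which $R$ obeys the pure transport equation $\d_tR+\tfrac12(\al+\bt)\cdot\nabla R=0$. Treating the two gradient terms in \eqref{i:Elsasser} as \emph{independent} Lagrange multipliers enforcing $\D\al=\D\bt=0$ (as explained after \eqref{i:Elsasser}), I would apply the Leray projector $\P=\Id+\nabla(-\Delta)^{-1}\D$ --- an order-zero Fourier multiplier, hence bounded on $\B$ exactly because of \eqref{i_eq:p} --- to obtain the reduced system
\begin{equation*}
\begin{cases}
\d_tR+\tfrac12(\al+\bt)\cdot\nabla R=0,\\
\d_t\al+\P\big[(\bt\cdot\nabla)\al\big]=-\tfrac12\,\P\big[R\,\mathfrak{C}(\al+\bt)\big],\\
\d_t\bt+\P\big[(\al\cdot\nabla)\bt\big]=-\tfrac12\,\P\big[R\,\mathfrak{C}(\al+\bt)\big],
\end{cases}
\end{equation*}
which preserves the constraints $\D\al=\D\bt=0$ and is convenient for the iterative scheme; equivalently, keeping the pressure, the last two equations are transport equations along $\bt$ and $\al$ with source $-\nabla\pi-\tfrac12 R\mathfrak{C}(\al+\bt)$, which is the form convenient for the a priori estimates, where --- taking the divergence of \eqref{i:Elsasser} and using $\D\al=\D\bt=0$ --- $-\Delta\pi=\mathrm{tr}(\nabla\al\cdot\nabla\bt)+\tfrac12\D(R\mathfrak{C}(\al+\bt))$. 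I would record the equivalence between \eqref{i_eq:MHD-I}, \eqref{i:Elsasser} and this projected system, in particular the \emph{a posteriori} coincidence of the two Lagrange multipliers of \eqref{i:Elsasser}, referring to Subsection \ref{ss:Elsasser}.

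The heart of the matter is the a priori estimate. For a smooth solution, set $E(t):=\|R(t)\|_\B+\|\al(t)\|_\B+\|\bt(t)\|_\B$. Applying the transport estimates of the Appendix to the three equations --- with advecting fields $U=\tfrac12(\al+\bt)$, $\bt$ and $\al$, all Lipschitz by the embedding $\B\hookrightarrow W^{1,\infty}$ guaranteed by \eqref{i_eq:Lip} --- together with the tame product law ($\B$ being an algebra under \eqref{i_eq:Lip}) to bound $\|R\mathfrak{C}(\al+\bt)\|_\B\lesssim\|R\|_\B(\|\al\|_\B+\|\bt\|_\B)$, and the elliptic estimate
\begin{equation*}
\|\nabla\pi\|_\B\,\lesssim\,\|\nabla\al\|_{B^{s-1}_{p,r}}\,\|\nabla\bt\|_{B^{s-1}_{p,r}}+\|R\|_\B\big(\|\al\|_\B+\|\bt\|_\B\big)\,\lesssim\,E(t)^2
\end{equation*}
--- where it is essential to have first rewritten $\D[(\bt\cdot\nabla)\al]$ as the product of first-order derivatives $\mathrm{tr}(\nabla\al\cdot\nabla\bt)\in B^{s-1}_{p,r}$, so that $\nabla(-\Delta)^{-1}$ restores one full derivative --- one arrives at
\begin{equation*}
E(t)\,\leq\,E(0)+C\int_0^tE(\tau)^2\,{\rm d}\tau.
\end{equation*}
A comparison (Riccati) argument then yields $E(t)\leq 2E(0)$ for every $t\leq T:=c/E(0)$, which is exactly the lifespan bound $T^*\geq C/\|(R_0,U_0,B_0)\|_\B$ of the statement.

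To construct a solution I would set up a classical iterative scheme (of Friedrichs type), defining $(R^{n+1},\al^{n+1},\bt^{n+1})$ as the solution of the \emph{linear} transport equations obtained by freezing, at step $n$, the advecting fields and the source terms --- and, if needed, truncating frequencies so that the iterates are smooth; the a priori estimate above, run uniformly in $n$, bounds the sequence in $C^0([0,T];\B)$ on a common interval $[0,T]$, while the same estimate for consecutive differences, carried out with a \emph{loss of one derivative} (i.e. in $B^{s-1}_{p,r}$, admissible under \eqref{i_eq:Lip}), shows it is Cauchy in $C^0([0,T];B^{s-1}_{p,r})$. Passing to the limit and interpolating --- using Fatou's lemma for the top regularity index --- produces a solution in $C^0([0,T];\B)$, to be replaced by $C^0_w([0,T];B^s_{p,\infty})$ when $r=+\infty$ (where strong time continuity at the top index genuinely fails). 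Uniqueness follows the same pattern: the difference of two solutions with the same data solves transport equations whose source terms lose one derivative (the pressure difference being again handled via the divergence-free conditions), hence obeys a linear Gr\"onwall inequality in $B^{s-1}_{p,r}$ --- which is precisely why $r=1$ must be imposed at the endpoint $s=1+d/p$ --- and therefore vanishes. Inserting the solution back into the equations gives $\d_tR,\d_t\al,\d_t\bt\in C^0([0,T];B^{s-1}_{p,r})$, and the elliptic equation for $\pi$ (with the above rewriting) gives $\nabla\pi\in C^0([0,T];\B)$, whence the remaining regularity assertions for $\d_tU$, $\d_tB$ and $\nabla\Pi$.

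Once the system is in transport form the Littlewood--Paley part is routine; the genuinely delicate points are the treatment of the pressure and the Leray projector --- proving the equivalence of the three formulations (notably the \emph{a posteriori} identification of the two Lagrange multipliers) and controlling the order-zero singular integrals $\P$ and $\nabla(-\Delta)^{-1}\D$ in $\B$, which is exactly why the range \eqref{i_eq:p} (no $p=1$, no $p=+\infty$) cannot be relaxed --- and the critical case $s=1+d/p$, $r=1$, where the product, commutator and transport estimates must be used in their sharp endpoint form.
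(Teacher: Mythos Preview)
Your proposal is correct and follows essentially the same route as the paper: pass to Els\"asser variables, use the Leray projector (bounded on $\B$ precisely because $1<p<+\infty$) to exhibit the transport structure, derive the quadratic a priori inequality $E(t)\le E(0)+C\int_0^t E^2$, and recover the pressure via the elliptic equation exploiting the cancellation $\D[(\bt\cdot\nabla)\al]=\mathrm{tr}(\nabla\al\cdot\nabla\bt)$. The only notable difference is organisational: the paper packages the pressure contribution as the commutator $[\bt\cdot\nabla,\P]\al$ and bounds it by a dedicated paraproduct lemma (then localises with $\Delta_j$), whereas you keep $\nabla\pi$ as an explicit source and invoke the transport estimate directly; and the paper outsources the actual construction and uniqueness to \cite{MY} rather than spelling out the iterative scheme you sketch.
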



We now state a continuation/blow-up criterion for the solutions found in the previous theorem, in the spirit of the classical Beale-Kato-Majda criterion
for solutions to the $3$-D incompressible Euler equations \cite{BKM}. As one can expect from the structure of the system, the criterion
rests on the control of the norm $L^1_T(\rm Lip)$ of the vector fields $U$ and $B$.

\begin{thm} \label{th:cont-crit}
Let $(s, p, r)$ be as in Theorem \ref{th:BesovWP} and let $(R, U, B) \in C^0\big([0,T[\,;\B(\R^d;\R^{1+2d})\big)$, with $C^0$ replaced by $C^0_w$ if $r = +\infty$, be a solution of \eqref{i_eq:MHD-I}. Assume that 
\begin{equation*}
\int_0^T \Big( \big\| \nabla U(t) \big\|_{L^\infty} + \big\| \nabla B(t) \big\|_{L^\infty} \Big) \dt < +\infty.
\end{equation*}
Then $(R, U, B)$ can be continued beyond $T$ into a solution of \eqref{i_eq:MHD-I} with the same regularity.
\end{thm}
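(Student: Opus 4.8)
The plan is to argue by contradiction: assume $T^* = T < +\infty$ is the maximal lifespan while $\int_0^T(\|\nabla U\|_{L^\infty} + \|\nabla B\|_{L^\infty})\,\dt < +\infty$, and derive a uniform-in-$t$ bound on $\|(R,U,B)(t)\|_{\B}$ for $t < T$; combined with the lower bound on the lifespan from Theorem \ref{th:BesovWP} (applied with data at a time close to $T$), this allows continuation past $T$, contradicting maximality. The natural vehicle is the Elsässer formulation \eqref{i:Elsasser}, or rather its Leray-projected form, since that is where the transport structure that drives the well-posedness proof lives. Writing $\alpha = U+B$, $\beta = U-B$, the variables $\alpha$, $\beta$, $R$ solve transport equations with transport fields $\beta$, $\alpha$, $U$ respectively, plus zero-order terms ($R\mathfrak{C}(\alpha+\beta)$ and pressure-type terms). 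Note $\|\nabla\alpha\|_{L^\infty} + \|\nabla\beta\|_{L^\infty} \sim \|\nabla U\|_{L^\infty} + \|\nabla B\|_{L^\infty}$, so the hypothesis exactly controls the Lipschitz norms of all relevant transport fields.

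The key steps, in order: First, recall the logarithmic/linear transport estimate in $\B$ (from the appendix): for $\d_t f + v\cdot\nabla f = g$ with $\div v = 0$, one has $\|f(t)\|_{\B} \lesssim \|f_0\|_{\B} + \int_0^t\|g\|_{\B} + \int_0^t V'(\tau)\|f(\tau)\|_{\B}\,d\tau$ where $V(t) = \int_0^t\|\nabla v\|_{L^\infty}$ (in the borderline case $s = 1+d/p$, $r=1$, the estimate has the same linear form because we are above the threshold where the genuine logarithmic loss appears — or one absorbs it using that $\|\nabla v\|_{L^\infty} \lesssim \|v\|_{\B}$ only at the end; care is needed here). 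Second, apply this to $R$ with $g=0$, to $\alpha$ with $g = -\mathbb{P}(R\mathfrak{C}(\alpha+\beta))$, and to $\beta$ similarly. Third, estimate the source terms: since $s > d/p$ (so $\B$ is an algebra) and $\mathbb{P}$ is bounded on $\B$ (here $1<p<+\infty$ is used), $\|\mathbb{P}(R\mathfrak{C}(\alpha+\beta))\|_{\B} \lesssim \|R\|_{\B}(\|\alpha\|_{\B} + \|\beta\|_{\B})$. Fourth, set $X(t) = \|R(t)\|_{\B} + \|\alpha(t)\|_{\B} + \|\beta(t)\|_{\B}$ and collect: $X(t) \lesssim X(0) + \int_0^t\big(\|\nabla U\|_{L^\infty} + \|\nabla B\|_{L^\infty} + \|R\|_{\B}\big)X(\tau)\,d\tau$. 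The term $\|R\|_{\B}$ inside the integral is the one subtlety — it is itself part of $X$, giving a quadratic term. Fifth, handle this: observe that $R$ is merely transported by the divergence-free field $U$, so $\|R(t)\|_{\B} \leq \|R_0\|_{\B}\exp\big(C\int_0^t\|\nabla U\|_{L^\infty}\big) =: \mathcal{R}(t)$, which is finite and bounded on $[0,T]$ by hypothesis. Substituting, $X(t) \lesssim X(0) + \int_0^t\big(\|\nabla U\|_{L^\infty} + \|\nabla B\|_{L^\infty} + \mathcal{R}(\tau)\big)X(\tau)\,d\tau$, and Grönwall's lemma gives $\sup_{[0,T)}X(t) < +\infty$. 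Sixth, translate back: $\|(R,U,B)(t)\|_{\B} \lesssim X(t)$ is bounded on $[0,T)$, hence $(R,U,B)(t)$ converges in $B^{s'}_{p,r}$ for $s' < s$ (and weakly-$*$ in $\B$) to a limit at time $T$ lying in $\B$; restarting the local existence theorem of Theorem \ref{th:BesovWP} from that time yields the continuation with the same regularity, and the pressure and time-derivative regularities follow from the equations as in the existence proof.

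The main obstacle I expect is the borderline case $s = 1 + d/p$, $r = 1$: the standard transport estimate in $B^{d/p+1}_{p,1}$ is clean (no logarithmic loss, because the loss occurs at $s=0$-type indices, not here), but one must be careful that all the product and commutator estimates close at this single critical index without spending regularity — this is where the algebra property $s \geq d/p$ and the sharp form of the transport estimate (genuinely \emph{linear} in $\|f\|_{\B}$, with the $\|\nabla v\|_{L^\infty}$ weight and not a $\|v\|_{\B}$ weight) are essential. A secondary technical point is ensuring the equivalence between \eqref{i_eq:MHD-I}, the Elsässer system \eqref{i:Elsasser}, and its projected version on $[0,T)$ for the solution at hand — but since $1<p<+\infty$ and the solution is in $\B \subset W^{1,\infty}$ with enough integrability, this equivalence is exactly what was already established in the existence part of Section \ref{WP}, and may be quoted.
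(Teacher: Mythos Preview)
Your overall architecture---pass to the Els\"asser variables, exploit the transport structure, derive a differential inequality for $X(t)=\|(R,\al,\bt)(t)\|_{\B}$, and close by Gr\"onwall---is correct and is exactly the route the paper takes. The gap is in your step~5.

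The claim that pure transport of $R$ by $U$ yields $\|R(t)\|_{\B}\le\|R_0\|_{\B}\exp\big(C\int_0^t\|\nabla U\|_{L^\infty}\big)$ is not justified in the regime $s\ge 1+d/p$. The commutator estimate (Lemma~\ref{l:CommBCD}) reads
\[
2^{js}\big\|[U\cdot\nabla,\Delta_j]R\big\|_{L^p}\;\lesssim\;c_j\Big(\|\nabla U\|_{L^\infty}\|R\|_{\B}+\|\nabla U\|_{B^{s-1}_{p,r}}\|\nabla R\|_{L^\infty}\Big),
\]
and the second term carries $\|\nabla U\|_{B^{s-1}_{p,r}}\sim\|U\|_{\B}$, which is \emph{not} controlled by the hypothesis. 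Even after bounding $\|\nabla R\|_{L^\infty}$ separately (which you can do), the estimate for $\|R\|_{\B}$ still contains $\int_0^t\|U\|_{\B}$, so it cannot be decoupled from $X$. Plugging this back leaves you with a genuinely quadratic inequality $X\lesssim X_0+\int_0^t(W+X)X$ (with $W$ integrable), which Gr\"onwall does not close.

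The paper fixes this not by bounding $\|R\|_{\B}$ independently, but by sharpening the estimate on the \emph{source term} $R\,\mathfrak C(\al+\bt)$. The naive algebra/tame bound $\|R\,\mathfrak C(\al+\bt)\|_{\B}\lesssim\|R\|_{\B}\|\al+\bt\|_{\B}$ (your step~3) is what produces the quadratic term; instead, a paraproduct decomposition together with a frequency splitting of $\mc T_{\mathfrak C(\al+\bt)}R$ gives
\[
\|R\,\mathfrak C(\al+\bt)\|_{\B}\;\lesssim\;\|R\|_{L^\infty}\|\al+\bt\|_{\B}+\big(\|\al+\bt\|_{L^p}+\|\nabla(\al+\bt)\|_{L^\infty}\big)\|R\|_{\B}.
\]
Now every product has one factor that is a \emph{low-order} norm controllable independently of $X$: $\|R\|_{L^\infty}$ and $\|\nabla R\|_{L^\infty}$ via the transport equation for $R$, $\|(\al,\bt)\|_{L^p}$ via $L^p$ estimates on \eqref{eq:MHDabAPriori}, and $\|\nabla(\al+\bt)\|_{L^\infty}$ by hypothesis. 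The resulting inequality is then linear in $X$ with integrable coefficients, and Gr\"onwall applies. Your steps~1--4 and~6 are otherwise in good shape.
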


The rest of this section is devoted to the proof of the previous statements.
We will proceed in several steps. The first step (see Subsection \ref{ss:Elsasser}) is to pass to the Els\"asser formulation of the equations and to establish the equivalence of this formulation with the
original one in our functional framework.
Then, the results will be recasted in Els\"asser variables. In Subsection \ref{ss:abAprioriPFinite}, we will mainly focus on the proof of \tsl{a priori} estimates
for regular solutions to equations \eqref{i_eq:MHD-I}, and sketch only the other steps of the proof of the local well-posedness result. Finally, in Subsection \ref{ss:ContCritPFinite}
we present the proof of the continuation criterion, \tsl{i.e.} Theorem \ref{th:cont-crit}.


\subsection{Passing to the Els\"asser formulation} \label{ss:Elsasser}

Let us introduce the so-called \emph{Els\"asser variables}, defined by the transformation
\[ 
\alpha = U + B \qquad \text{ and } \qquad \beta = U - B.
\] 
In the new set of unknowns $(R,\alpha,\beta)$, the quasi-homogeneous ideal MHD system \eqref{i_eq:MHD-I} can be recasted in the following form:
\begin{equation}\label{eq:MHDab}
\begin{cases}
\partial_t R + \D \left( \dfrac{1}{2} R(\al + \bt) \right) = 0 \\[1ex]
\partial_t \alpha + (\beta \cdot \nabla) \alpha + \dfrac{1}{2}  R \mathfrak{C} (\al + \bt) + \nabla \pi_1 = 0\\[1ex]
\partial_t \beta + (\alpha \cdot \nabla) \beta + \dfrac{1}{2}  R \mathfrak{C} (\al + \bt) + \nabla \pi_2  = 0\\[1ex]
\D(\alpha) = \D(\beta) = 0\,,
\end{cases}
\end{equation}
where $\pi_1$ and $\pi_2$ are (possibly distinct) scalar ``pressure'' functions. In fact, as explained in the sequel, in our framework we must have $\nabla \pi_1 = \nabla \pi_2$.

More precisely, it has been proved in \cite{Cobb-F_Els} that, under some (very mild) integrability assumptions on the set of unknowns $(R,U,B)$ and $(R,\alpha,\beta)$, the two formulations \eqref{i_eq:MHD-I}
and \eqref{eq:MHDab} are completely equivalent. It is worth mentioning that the result of \cite{Cobb-F_Els} requires very weak regularity on the solutions (it is in fact stated
for weak solutions of the two systems), so it works for a very large class of data. In our framework, the result of \cite{Cobb-F_Els} can be rephrased as follows.

\begin{thm}\label{th:symm}
Let $(s,p,r)\in\R\times[1,+\infty]\times[1,+\infty]$ satisfy conditions \eqref{i_eq:Lip} and \eqref{i_eq:p}. Let $T>0$.
\begin{enumerate}[(i)]
 \item Consider $(R_0,U_0,B_0)\in \left(B^{s}_{p,r}\right)^3$ a set of initial data, with $U_0$ and $B_0$ being divergence-free, and let $(R,U,B)\in \left(L^\infty_T(B^s_{p,r})\right)^3$ be a strong
solution to \eqref{i_eq:MHD-I} related to that initial datum, for some pressure $\Pi$. Define $(\al_0, \bt_0) = (U_0+B_0, U_0-B_0)$ and $(\alpha,\beta)=(U+B,U-B)$. \\
Then $(R,\alpha,\beta)\in\left(L^\infty_T(B^s_{p,r})\right)^3$ is a strong solution to system \eqref{eq:MHDab} related to the initial datum $(R_0,\alpha_0,\beta_0)$,
for suitable ``pressure'' functions $\pi_1$ and $\pi_2$ such that $\nabla\pi_1=\nabla\pi_2=\nabla\pi$, where $\pi$ is defined in \eqref{eq:p-P}.
\item {Conversely}, consider $(R_0,\alpha_0,\beta_0)\in \left(B^{s}_{p,r}\right)^3$, with both $\alpha_0$ and $\beta_0$ being divergence-free, and let
$(R,\alpha,\beta)\in \left(L^\infty_T(B^s_{p,r})\right)^3$ be a related strong solution to \eqref{eq:MHDab}, for suitable ``pressures'' $\pi_1$ and $\pi_2$.
Define $(U_0,B_0)=\big(\frac{\alpha_0+\beta_0}{2},\frac{\alpha_0-\beta_0}{2}\big)$ and $(U,B)=\big(\frac{\alpha+\beta}{2},\frac{\alpha-\beta}{2}\big)$. \\
Then one has $\nabla\pi_1=\nabla\pi_2$, and $(R,U,B)\in \left(L^\infty_T(B^s_{p,r})\right)^3$ is a strong solution to system \eqref{i_eq:MHD-I} related to the initial datum $(R_0,U_0,B_0)$,
for a suitable pressure function $\Pi$ such that $\nabla\Pi\,=\,\nabla\big(\pi_1-|B|^2/2\big)$.
\end{enumerate}

In addition, in that case the Els\"asser variables $(R, \al, \bt)$ solve the following system
\begin{equation}\label{eq:EquivEq}
\begin{cases}
\partial_t R + \dfrac{1}{2}\D \big( R (\al + \bt) \big) = 0 \\[1ex]
\partial_t \al + \mathbb{P} \D (\bt \otimes \al) + \dfrac{1}{2} \mathbb{P} \big( R \mathfrak{C} (\al + \bt) \big) = 0\\[1ex]
\partial_t \bt + \mathbb{P} \D (\al \otimes \bt) + \dfrac{1}{2} \mathbb{P} \big( R \mathfrak{C} (\al + \bt) \big) = 0
\end{cases}
\end{equation}
almost everywhere in $[0,T]\times\R^d$, where $\P$ denotes the Leray projector onto the space of divergence-free vector fields.
\end{thm}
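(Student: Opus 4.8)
The statement combines a linear change of unknowns with a regularity-preservation argument and, in the converse direction, with an \emph{a posteriori} identification of the two Lagrange multipliers. The plan is to treat the two implications separately, and then read off \eqref{eq:EquivEq} by projection.

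For item (i), the computation is purely algebraic. Writing the momentum equation of \eqref{i_eq:MHD-I} with $\pi = \Pi + \frac12|B|^2$ and then adding it to, respectively subtracting from it, the induction equation, one uses the constraints $\D(U) = \D(B) = 0$ to replace each $\D(V\otimes W)$ by $(V\cdot\nabla)W$; grouping terms gives $(U\cdot\nabla)(U+B) - (B\cdot\nabla)(U+B) = (\bt\cdot\nabla)\al$ and $(U\cdot\nabla)(U-B) + (B\cdot\nabla)(U-B) = (\al\cdot\nabla)\bt$, while $R\mathfrak{C}U = \frac12 R\mathfrak{C}(\al+\bt)$ and $RU = \frac12 R(\al+\bt)$. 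This is precisely \eqref{eq:MHDab} with $\pi_1 = \pi_2 = \pi$, so in particular $\nabla\pi_1 = \nabla\pi_2 = \nabla\pi$. Since $\B$ with $s \geq 1 + d/p$ embeds in the Lipschitz class and the product is continuous from $\B \times B^{s-1}_{p,r}$ to $B^{s-1}_{p,r}$ (see the Appendix), all the terms above belong to $L^\infty_T(B^{s-1}_{p,r})$, the manipulations are licit, and $(R,\al,\bt)$ inherits the regularity of $(R,U,B)$. Then applying the Leray projector $\P = \Id + \nabla(-\Delta)^{-1}\D$, which for $1 < p < +\infty$ is a bounded multiplier on $B^{s-1}_{p,r}$ annihilating gradients of functions whose gradient lies in that space, and observing that $\P\d_t\al = \d_t\al$ because $\D(\al) = 0$, one obtains \eqref{eq:EquivEq}; here the restriction $p < +\infty$ is essential, since only then does $B^{s-1}_{p,r}$ contain no nonzero harmonic (i.e. polynomial) tempered distribution.

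For item (ii) the core point is to prove $\nabla\pi_1 = \nabla\pi_2$ for a solution of \eqref{eq:MHDab} with \emph{a priori} independent multipliers. I would take the divergence of the $\al$- and $\bt$-equations: the terms $\d_t\al$, $\d_t\bt$ drop out by $\D(\al) = \D(\bt) = 0$, leaving $\Delta\pi_1 = -\D\big((\bt\cdot\nabla)\al\big) - \frac12\D\big(R\mathfrak{C}(\al+\bt)\big)$ and the analogous identity for $\pi_2$ with $(\al\cdot\nabla)\bt$ in place of $(\bt\cdot\nabla)\al$. The elementary but crucial identity is $\D\big((\bt\cdot\nabla)\al\big) = \sum_{i,j}\d_i\bt_j\,\d_j\al_i = \D\big((\al\cdot\nabla)\bt\big)$, again via the divergence-free constraints; hence $\Delta(\pi_1 - \pi_2) = 0$, and since $\nabla(\pi_1 - \pi_2) \in L^\infty_T(B^{s-1}_{p,r})$ with $p < +\infty$, a Liouville-type argument forces $\nabla\pi_1 = \nabla\pi_2 =: \nabla\pi$ (equivalently, $\nabla\pi_1 - \nabla\pi_2 = (\P-\Id)\D(\bt\otimes\al - \al\otimes\bt)$ vanishes because $\D\D(\bt\otimes\al) = \D\D(\al\otimes\bt)$). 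With this in hand the algebra of item (i) runs in reverse: setting $U = \frac12(\al+\bt)$, $B = \frac12(\al-\bt)$ and adding, respectively subtracting, the two equations of \eqref{eq:MHDab}, one recovers the momentum and induction equations of \eqref{i_eq:MHD-I}, with $\Pi$ fixed by $\nabla\Pi = \nabla\big(\pi - \frac12|B|^2\big)$, while the continuity equation and the constraints $\D(U) = \D(B) = 0$ are immediate; the system \eqref{eq:EquivEq} then follows by projecting as above.

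The main obstacle is exactly this pressure identification in item (ii): everything hinges on the Lagrange multipliers $\nabla\pi_1, \nabla\pi_2$ living in a functional space that rules out nonconstant harmonic fields, which is precisely what fails in a purely $L^\infty$ framework (cf. the discussion in the Introduction) and the reason for imposing $1 < p < +\infty$. Since the equivalence of the three formulations under minimal integrability assumptions is established in full generality in \cite{Cobb-F_Els}, in the present Besov setting it suffices to check that any solution in $\big(L^\infty_T(\B)\big)^3$ meets the integrability hypotheses of that reference; the computations sketched above are exactly this verification, so the statement follows.
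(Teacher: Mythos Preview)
The paper does not actually prove this theorem in the text: it is stated as a rephrasing, in the present Besov framework, of a general equivalence result from the companion paper \cite{Cobb-F_Els}, and no further argument is given. Your proposal therefore supplies strictly more than the paper does, namely an outline of what the proof in \cite{Cobb-F_Els} presumably contains. The sketch is correct: item~(i) is the purely algebraic rewriting via the identities $(U\cdot\nabla)(U\pm B)\mp(B\cdot\nabla)(U\pm B)=(\bt\cdot\nabla)\al$ resp.\ $(\al\cdot\nabla)\bt$; item~(ii) hinges on the symmetry $\D\big((\bt\cdot\nabla)\al\big)=\sum_{i,j}\d_i\bt_j\,\d_j\al_i=\D\big((\al\cdot\nabla)\bt\big)$, which forces $\Delta(\pi_1-\pi_2)=0$, together with the Liouville-type fact that $B^{s-1}_{p,r}(\R^d)$ contains no nonzero harmonic (hence polynomial) function when $p<+\infty$; and \eqref{eq:EquivEq} follows by applying $\P$.

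One small point worth tightening: when you write ``$\P\d_t\al=\d_t\al$ because $\D(\al)=0$'' and ``$\P\nabla\pi_1=0$'', you are implicitly assuming that $\d_t\al$ and $\nabla\pi_1$ each lie in a space on which $\P$ acts. A~priori you only know that their \emph{sum} $\d_t\al+\nabla\pi_1=-\D(\bt\otimes\al)-\tfrac12 R\mathfrak C(\al+\bt)$ belongs to $L^\infty_T(B^{s-1}_{p,r})$. The clean way around this is to apply $\P$ and $\Q={\rm Id}-\P$ to that sum: since the sum is in $B^{s-1}_{p,r}$ with $1<p<+\infty$, both projections are well-defined and yield $\d_t\al=\P[\ldots]$ and $\nabla\pi_1=\Q[\ldots]$ separately, each in $L^\infty_T(B^{s-1}_{p,r})$; this simultaneously justifies the manipulation and gives the regularity of the pressure gradients needed for the Liouville step. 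This is precisely the sort of verification the paper delegates to \cite{Cobb-F_Els}, and your final paragraph already points in that direction.
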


Recall that the Leray projector $\mathbb{P}$, which is the $L^2$-orthogonal projection on the subspace of divergence-free functions, is defined as a Fourier multiplier by
\begin{equation*}
\forall f \in L^2(\mathbb{R}^d ; \mathbb{R}^d), \qquad \what{(\mathbb{P}f)_j}(\xi) = \sum_{k=1}^d\left( 1 - \frac{\xi_j \xi_k}{|\xi|^2} \right) \what{f_k}(\xi)\,.
\end{equation*}
In other words, $\mathbb{P} = I + \nabla (-\Delta)^{-1}\D$, in the sense of pseudo-differential operators.
We refer to Remark \ref{r:Leray} in the Appendix for continuity properties of the Leray projector in Lebesgue and Besov spaces.


Finally, by introducing commutators of operators, we see that \eqref{eq:EquivEq} is in fact a system of transport equations: 
more precisely, we can recast it in the form
\begin{equation}\label{eq:MHDabAPriori}
\begin{cases}
\partial_t R + \dfrac{1}{2}\,\D \big( R(\al + \bt) \big) = 0 \\[1ex]
\partial_t \alpha + (\beta \cdot \nabla) \alpha + \dfrac{1}{2} \mathbb{P} \big( R \mathfrak{C} (\al + \bt) \big) = \big[ \beta \cdot \nabla, \, \mathbb{P} \big]\alpha \\[1ex]
\partial_t \beta + (\alpha \cdot \nabla) \beta + \dfrac{1}{2} \mathbb{P} \big( R \mathfrak{C} (\al + \bt) \big) = \big[ \alpha \cdot \nabla, \, \mathbb{P} \big]\beta\,,
\end{cases}
\end{equation}
which we equip with a regular initial datum $(R_0, \al_0, \bt_0)$, with $\D(\al_0)=\D(\bt_0)=0$. 

Here the transport operators are to be understood in the weak sense: thanks to the divergence-free conditions,
\begin{equation*}
(\bt \cdot \nabla) \al = \D (\bt \otimes \al) \qquad\qquad \text{ and } \qquad\qquad \big[ \bt \cdot \nabla, \mathbb{P} \big] \al = (I - \mathbb{P}) \D (\bt \otimes \al).
\end{equation*}
However, since we will solve the system only in the case where $\al$ and $\bt$ are Lipschitz functions, we can work with either definition (weak or strong) of the transport operators.

System \eqref{eq:MHDabAPriori} is the final (equivalent, in our framework) formulation of the original system \eqref{i_eq:MHD-I}, which we are going to work with in our proof.

\subsection{\tsl{A priori} estimates for the Els\"asser variables}\label{ss:abAprioriPFinite}

In this subsection, we show \tsl{a priori} estimates in Besov spaces $B^s_{p,r}$, under conditions 
\eqref{i_eq:Lip} and \eqref{i_eq:p} on the indices, for smooth solutions to system \eqref{eq:MHDabAPriori}.

\begin{prop} \label{p:a-priori}
Let $(s,p,r)\in\R\times[1,+\infty]\times[1,+\infty]$ satisfy conditions \eqref{i_eq:Lip} and \eqref{i_eq:p}.
Let $(R,\al, \bt)$ be regular solutions of system \eqref{eq:MHDabAPriori}, related to smooth initial data
$(R_0,\al_0, \bt_0)$, with $\al_0$ and $\bt_0$ being divergence-free.

Then there exist two constants $C_1, C_2 > 0$, which depend on the dimension and $(s, p, r)$, as well as a time $T^*>0$, which depends on the quantities above and on $\big\|\big(R_0,\al_0, \bt_0\big)\big\|_{\B}$, such that
\begin{equation} \label{est:B-bound}
\sup_{t\in[0,T^*]}\big\|\big(R(t),\al(t), \bt(t)\big)\big\|_{\B}\, \leq\, C_1\, \big\|\big(R_0,\al_0,\bt_0\big)\big\|_{\B}\,,
\end{equation}
together with the lower bound
\begin{equation*}
T^*\, \geq\, \frac{C_2}{\big\|\big(R_0,\al_0, \bt_0\big)\big\|_{\B}}\,.
\end{equation*}
\end{prop}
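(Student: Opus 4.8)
The plan is to prove the a priori estimate \eqref{est:B-bound} by a standard energy argument in Besov spaces applied to the transport-type system \eqref{eq:MHDabAPriori}, using the classical transport estimates recalled in the Appendix. The key observation is that all three equations are transport equations (for $R$ along the velocity field $\frac12(\al+\bt)$, for $\al$ along $\bt$, for $\bt$ along $\al$) with right-hand sides that are of the same order as the unknowns. So first I would apply the $\B$-transport estimate to each equation separately: for $R$, the drift is $\frac12(\al+\bt)$ whose Lipschitz norm is controlled by $\|(\al,\bt)\|_{\B}$ (since $\B\hookrightarrow W^{1,\infty}$ under \eqref{i_eq:Lip}), and there is no source term; for $\al$, the drift is $\bt$ and the source is $-\frac12\mathbb{P}(R\mathfrak{C}(\al+\bt)) + [\bt\cdot\nabla,\mathbb{P}]\al$; symmetrically for $\bt$.

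The main work is to estimate the two source terms in $\B$. The term $\frac12\mathbb{P}(R\mathfrak{C}(\al+\bt))$ is handled by boundedness of the Leray projector on $\B$ (Remark \ref{r:Leray}) together with the fact that $\B$ is an algebra under \eqref{i_eq:Lip}, giving a bound $\lesssim \|R\|_{\B}\,\|(\al,\bt)\|_{\B}$. The commutator term $[\bt\cdot\nabla,\mathbb{P}]\al = (I-\mathbb{P})\D(\bt\otimes\al)$ needs a commutator estimate: since $\D(\bt\otimes\al) = (\bt\cdot\nabla)\al$ because $\div\bt=0$, one writes $[\bt\cdot\nabla,\mathbb{P}]\al$ and invokes the standard commutator estimate between a Lipschitz vector field and a Fourier multiplier of order zero (of Calderón–Zygmund type), yielding $\|[\bt\cdot\nabla,\mathbb{P}]\al\|_{\B} \lesssim \|\nabla\bt\|_{L^\infty}\|\al\|_{\B} + \|\nabla\bt\|_{\B\cap L^\infty}\|\al\|_{L^\infty}$, hence $\lesssim \|\bt\|_{\B}\|\al\|_{\B}$ under \eqref{i_eq:Lip}. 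I would collect these into the single quantity $E(t) := \|(R(t),\al(t),\bt(t))\|_{\B}$ and arrive, after also bounding the $\|\nabla\al\|_{L^\infty}, \|\nabla\bt\|_{L^\infty}$ factors coming from the transport estimates by $E(t)$, at a differential inequality of Riccati type
\begin{equation*}
E(t) \leq E(0) + C\int_0^t E(\tau)^2\,\d\tau \qquad\text{(or the corresponding form with }\tfrac{d}{dt}E \leq C E^2\text{)}.
\end{equation*}
When $r=+\infty$ the transport estimate loses a logarithm / must be applied in the Chemin–Lerner / integrated form, but the same Riccati inequality results; I would remark on this rather than redo it.

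Integrating this Riccati inequality gives $E(t) \leq E(0)/(1 - C\,t\,E(0))$, so choosing $T^* := 1/(2C\,E(0)) = C_2/\|(R_0,\al_0,\bt_0)\|_{\B}$ yields $\sup_{[0,T^*]} E(t) \leq 2E(0)$, which is exactly \eqref{est:B-bound} with $C_1 = 2$ and the stated lower bound on $T^*$. One should also check that the divergence-free conditions $\D\al = \D\bt = 0$ are propagated in time — this follows by applying $\D$ to the $\al$ and $\bt$ equations and noting that $\D\mathbb{P} = 0$ and $\D((\bt\cdot\nabla)\al) = \D(\D(\bt\otimes\al))$ combine so that $\D\al$ solves a transport equation with zero data — and that the pressure-type gradients never actually entered \eqref{eq:MHDabAPriori}, which is the whole point of having projected the system.

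I expect the main obstacle to be the clean bookkeeping of the commutator estimate $\|[\,v\cdot\nabla,\mathbb{P}\,]w\|_{\B}$ at the critical index $s = 1+d/p$ with $r=1$: one must make sure the commutator gains exactly one derivative so that the right-hand side is bounded by $\|v\|_{\B}\|w\|_{\B}$ without any loss, and that the borderline Besov product/algebra and composition estimates are applied with the correct endpoints. Everything else — the transport estimates, the Leray projector bounds, closing the Riccati loop — is routine and can be quoted from the Appendix; the only genuinely delicate point is ensuring the commutator and product estimates are sharp at the critical regularity, which is precisely why the Els\"asser/Leray reformulation was set up in the first place.
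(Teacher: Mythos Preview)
Your proposal is correct and follows essentially the same route as the paper: transport estimates on each equation, the commutator bound $\|[\bt\cdot\nabla,\mathbb{P}]\al\|_{\B}\lesssim\|\al\|_{\B}\|\bt\|_{\B}$ (which the paper isolates as a separate lemma proved via Bony decomposition rather than quoting as ``standard''), the algebra property for the rotation term, and then closing a Riccati inequality for $E(t)$. The only cosmetic difference is that the paper applies $\Delta_j$ explicitly and takes $\ell^r$ norms via Minkowski to obtain the clean integral form $cE(t)\leq E(0)+\int_0^t E^2$, whereas quoting Theorem~\ref{th:transport} black-box would give the version with an exponential prefactor $e^{C\int E}$; both close to the same lifespan bound.
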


Before proving the previous proposition, we recall that, since $p\in\,]1,+\infty[\,$, the Leray projector $\P$ is well defined and acts continuously on
$L^p$ and on $B^s_{p,r}$, for any $s\in \mathbb{R}$ and $r \in [1, +\infty]$. See also Remark \ref{r:Leray} in the Appendix about this point.

We also recall that, under conditions \eqref{i_eq:Lip} and \eqref{i_eq:p}, the space $\B$ is contained in the space $W^{1, \infty}$ of globally Lipschitz functions, and that both $\B$ and $B^{s-1}_{p, r}$
are Banach algebras (see Corollary \ref{c:tame} in the Appendix).

The first step in the proof of the previous statement is to find a $\B$-bound for the commutators in the right-hand side of \eqref{eq:MHDabAPriori}: this is the scope of the next lemma.

\begin{lemma}\label{l:commB}
Let $(s,p,r)\in\R\times[1,+\infty]\times[1,+\infty]$ satisfy conditions \eqref{i_eq:Lip} and \eqref{i_eq:p}. For all divergence-free vector fields $\al, \bt \in \B$, we have
\begin{equation*}
\left\|  \big[ \bt \cdot \nabla , \mathbb{P} \big] \al  \right\|_{\B}\, \lesssim\,
\| \nabla \al \|_{L^\infty} \| \bt \|_{\B} + \|\al\|_{\B} \| \nabla \bt \|_{L^\infty}\,.
\end{equation*}
\end{lemma}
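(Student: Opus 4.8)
The commutator $[\bt\cdot\nabla,\mathbb{P}]\al$ has a gain of one derivative hidden in it, coming from the fact that the top-order piece of $\bt\cdot\nabla$ commutes with the Fourier multiplier $\mathbb{P}$ up to lower-order terms; this is the principle we want to exploit. The natural strategy is a Littlewood-Paley (Bony) paraproduct decomposition. Write $\bt\cdot\nabla\al = T_{\bt^k}\d_k\al + T_{\d_k\al}\bt^k + R(\bt^k,\d_k\al)$ (summation over $k$) and similarly decompose $\mathbb{P}(\bt\cdot\nabla\al)$; then the commutator splits into three corresponding pieces. One could equivalently use the identity $[\bt\cdot\nabla,\mathbb{P}]\al=(I-\mathbb{P})\D(\bt\otimes\al)$ noted in the excerpt, which is often the cleanest starting point since $I-\mathbb{P}$ is also an order-zero Fourier multiplier and $\D(\bt\otimes\al)=\bt\cdot\nabla\al$ by the divergence-free hypothesis, but I would keep the commutator form because the cancellation I need lives there.

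The heart of the matter is the paraproduct term $[T_{\bt\cdot\nabla},\mathbb{P}]\al$ (i.e. $\sum_j [S_{j-1}\bt\cdot\nabla,\Delta_j\mathbb{P}]\Delta_j\al$ in dyadic blocks, roughly): on the $j$-th block, $S_{j-1}\bt$ is spectrally localized at frequencies $\lesssim 2^j$ while $\mathbb{P}$ acts at frequency $2^j$, and a first-order Taylor expansion of the symbol of $\mathbb{P}$ around the center of the $j$-th annulus produces the derivative on $S_{j-1}\bt$ and leaves a $2^{-j}$ gain that compensates the $\nabla$. This is exactly the classical commutator lemma for $[T_a,$ (smooth order-one homogeneous multiplier)$]$, and it yields a bound by $\|\nabla\bt\|_{L^\infty}\|\al\|_{\B}$ after summing in $\ell^r$. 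The two remaining pieces — $T_{\d_k\al}\bt^k$ type and the remainder $R(\bt,\nabla\al)$ type, together with $\mathbb{P}$ applied to them — carry no cancellation but also need none: $\mathbb{P}$ is bounded on $\B$ for $1<p<+\infty$ (recalled in the excerpt, Remark \ref{r:Leray}), and standard paraproduct/remainder estimates bound $\|T_{\d\al}\bt\|_{\B}\lesssim\|\nabla\al\|_{L^\infty}\|\bt\|_{\B}$ and $\|R(\bt,\nabla\al)\|_{\B}\lesssim\|\bt\|_{\B}\|\nabla\al\|_{L^\infty}$ (or, by symmetry of the remainder, $\|\al\|_{\B}\|\nabla\bt\|_{L^\infty}$), using $s>0$ — which holds since $s\ge 1+d/p$ — so that the remainder sum converges. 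Collecting the three contributions gives precisely the asserted estimate.

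I expect the only genuine obstacle to be bookkeeping rather than conceptual: one must be careful that the derivative produced by the symbol expansion lands on $\bt$ (giving the $\|\nabla\bt\|_{L^\infty}$ factor) and not on $\al$, and that the endpoint regularity $s=1+d/p$ with $r=1$ is handled — but there the argument is if anything easier, since $r=1$ makes the $\ell^r$ summations unconditionally convergent and $B^{1+d/p}_{p,1}\hookrightarrow W^{1,\infty}$ supplies the $L^\infty$ norms of the gradients. A clean way to organize everything is to invoke a standard commutator estimate from the Littlewood–Paley toolbox (the paper has an appendix for exactly such tools) in the form: for a matrix-valued Fourier multiplier $m(\xi)$ homogeneous of degree $0$ and smooth away from the origin, and $v$ a vector field, $\|[v\cdot\nabla,m(D)]w\|_{\B}\lesssim \|\nabla v\|_{L^\infty}\|w\|_{\B}+\|v\|_{\B}\|\nabla w\|_{L^\infty}$ under \eqref{i_eq:Lip}–\eqref{i_eq:p}; applying it with $m=$ the symbol of $\mathbb{P}$, $v=\bt$, $w=\al$ closes the proof immediately. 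If such a lemma is not already quotable from the appendix, I would prove this specialized version inline by the dyadic computation sketched above.
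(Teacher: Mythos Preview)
Your proposal is correct and follows essentially the same route as the paper: Bony-decompose the commutator, use the paraproduct--multiplier commutator lemma (Lemma~\ref{l:ParaComm} with $\k(D)=\mathbb{P}$) for the $[\mathcal{T}_{\bt_k}\d_k,\mathbb{P}]\al$ piece, and bound the remaining paraproduct and remainder terms by Proposition~\ref{p:op} together with the boundedness of $\mathbb{P}$ on $\B$ and the fact that $\mathbb{P}\al=\al$. Your anticipation that the needed tools are in the appendix is exactly right.
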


\begin{proof}[Proof of Lemma \ref{l:commB}]
The estimate is based on the Bony decomposition (see Section \ref{s:NHPC} in the Appendix) for the two products appearing in the commutator, which yields
\begin{equation*}
\big[ \bt \cdot \nabla , \mathbb{P} \big] \al = \big[ \mathcal{T}_{\beta_k} \partial_k , \mathbb{P} \big]  \al + \mathcal{T}_{\partial_k \mathbb{P} \al} (\beta_k) - \mathbb{P} \mathcal{T}_{\partial_k \al} (\beta_k) + \mathcal{R} \big( \beta_k, \partial_k \mathbb{P} \al \big) -  \mathbb{P} \mathcal{R} \big( \beta_k, \partial_k \al \big),
\end{equation*}
where an implicit summation is made over the repeated index $k = 1, ..., d$. To begin with, Lemma \ref{l:ParaComm} allows us to handle the first summand:
\begin{equation*}
\left\| \big[ \mathcal{T}_{\beta_k}  , \mathbb{P} \big] \partial_k \al  \right\|_{\B} \lesssim \| \nabla \al \|_{B^{s-1}_{p, r}}   \| \nabla \bt \|_{L^\infty} \lesssim \| \al \|_{\B}  \| \nabla \bt \|_{L^\infty} .
\end{equation*}
Next, by using Proposition \ref{p:op}, 
we get
\begin{align*}
&\| \mathcal{T}_{\partial_k \mathbb{P} \al}(\bt_k)\|_{\B} = \| \mathcal{T}_{\partial_k \al}(\bt_k)\|_{\B} \lesssim \|\nabla \al\|_{L^\infty} \|\bt\|_{\B} \\ 
&\big\| \mathbb{P} \mathcal{T}_{\partial_k \al} (\beta_k)  \big\|_{\B} \lesssim \| \nabla \al \|_{L^\infty} \| \beta \|_{\B}\,,
\end{align*}
where the first inequality holds because $\mathbb{P}\al = \al$ (the vector field $\al$ is divergence free by assumption) and the second one holds because $\P$ is bounded on $\B$ (since $1<p<+\infty$).
Owing to those same properties, another application of Proposition \ref{p:op} gives
\begin{align*}
\big\| \mathcal{R} ( \beta_k, \partial_k \mathbb{P} \al ) \big\|_{B^{s}_{p, r}} &\lesssim \| \nabla \al \|_{B^0_{\infty, \infty}}  \| \bt \|_{\B} \lesssim \| \nabla \al \|_{L^\infty} \| \bt \|_{\B} \\
\big\| \mathbb{P} \mathcal{R} (\bt_k, \partial_k \al) \big\|_{\B} &\lesssim \| \nabla \al \|_{L^\infty} \| \bt \|_{\B}\,,
\end{align*}
where we also used the embedding $L^\infty  \hookrightarrow B^0_{\infty, \infty}$.
Note that we always have $s-1>0$ under our assumptions, because $1 < p < +\infty$.

The lemma is thus proved.
\end{proof}

We are finally ready to prove Proposition \ref{p:a-priori}. 

\begin{proof}[Proof of Proposition \ref{p:a-priori}]
We fix an index $j \geq -1$ and apply the dyadic block $\Delta_j$ to the three equations of system \eqref{eq:MHDabAPriori}. Recalling that $\al$ and $\bt$ are divergence-free, we get
\begin{equation*}
\begin{cases}
\partial_t \Delta_j R + \frac{1}{2} (\al + \bt) \cdot \nabla \Delta_j R = \frac{1}{2} \big[ (\al + \bt) \cdot \nabla, \Delta_j \big] R \\
\partial_t \Delta_j \al + (\bt \cdot \nabla ) \Delta_j \al = \big[ \bt \cdot \nabla, \, \Delta_j\big] \al + \Delta_j \big[ \beta \cdot \nabla, \, \mathbb{P} \big]\al - \frac{1}{2} \Delta_j \mathbb{P} \big( R \mathfrak{C} (\al + \bt) \big) \\
\partial_t \Delta_j \bt + (\al \cdot \nabla ) \Delta_j \bt = \big[ \al \cdot \nabla, \, \Delta_j\big] \bt + \Delta_j \big[ \al \cdot \nabla, \, \mathbb{P} \big]\bt - \frac{1}{2} \Delta_j \mathbb{P} \big( R \mathfrak{C} (\al + \bt) \big)\,.
\end{cases}
\end{equation*}
Now, Lemma \ref{l:CommBCD} gives the following bounds: there exists a sequence $\big(c_j(t)\big)_{j\geq-1}$ in the unit ball of $\ell^r$ such that
\begin{equation*}
\begin{split}
&
2^{sj} \Big( \left\| \big[ \bt \cdot \nabla, \, \Delta_j\big] \al \right\|_{L^p} + \left\| \big[ \al \cdot \nabla, \, \Delta_j\big] \bt \right\|_{L^p} + \left\| \big[ (\al + \bt) \cdot \nabla, \Delta_j \big] R \right\|_{L^p} \Big)\\
& \qquad\qquad\qquad\qquad\qquad\qquad\qquad\qquad
\lesssim c_j(t) \left( \|\al\|_{\B} \|\bt\|_{\B} + \|R\|_{\B} \|\al + \bt \|_{\B} \right)\,.
 \end{split}
\end{equation*}
On the other hand, Lemma \ref{l:commB} and the embedding $\B \hookrightarrow W^{1, \infty}$ give a similar inequality: there exists a (different) sequence
$\big(c_j(t)\big)_j$ in the unit ball of $\ell^r$ such that
\begin{equation*}
\left\| \Delta_j \big[ \bt \cdot \nabla , \mathbb{P} \big] \al  \right\|_{L^p} + \left\| \Delta_j \big[ \al \cdot \nabla , \mathbb{P} \big] \bt  \right\|_{L^p} \lesssim c_j(t) \|\al\|_{\B} \| \bt \|_{\B}\,.
\end{equation*}
Finally, all that remains is the rotation term $\frac{1}{2}R \mathfrak{C}(\al + \bt)$: using the fact that $\B$ is a Banach algebra and that $\mathbb{P}$ is bounded on $\B$, we get
\begin{equation*}
\left\| \mathbb{P} \big( R \mathfrak{C}(\al + \bt) \big) \right\|_{\B} \lesssim \|R\|_{\B} \|\al + \bt \|_{\B}\,.
\end{equation*}

Therefore, basic $L^p$ estimates for transport equations with divergence-free vector fields give
\begin{align}
&2^{js} \big\| \Delta_j \big(R(t),\al(t), \bt(t)\big) \big\|_{L^p} \label{eq:ContInLp} \\
&\qquad\qquad\qquad\qquad
\lesssim 2^{js} \big\|\Delta_j \big(R_0,\al_0, \bt_0, R_0\big)\big\|_{L^p} +
\int_0^t c_j(\tau) \big\| \big(R(\tau),\al(\tau), \bt(\tau)\big) \big\|_{\B}^2\, {\rm d}\tau\,. \nonumber
\end{align}
Taking the $\ell^r$ norm, we find, for all times $t> 0$, the inequality
\begin{equation*}
\big\|  \big(R(t),\al(t), \bt(t)\big) \big\|_{L^\infty_t(\B)} \lesssim \|(\al_0, \bt_0, R_0)\|_{\B} + \left\| \int_0^t c_j(\tau) \big\| \big(R(\tau)\,\al(\tau), \bt(\tau)\big) \big\|_{\B}^2 {\rm d} \tau \right\|_{\ell^r}\,.
\end{equation*}
Using the Minkowski inequality (see Proposition 1.3 in \cite{BCD}) to slip the $\ell^r$ norm inside the integral, we finally deduce the following bound:
\begin{equation} \label{est:bound-Besov}
c\, \big\|  \big(R(t),\al(t), \bt(t)\big) \big\|_{\B} \leq \|(R_0,\al_0, \bt_0)\|_{\B} + \int_0^t \big\| \big(R(\t),\al(\tau), \bt(\tau)\big) \big\|_{\B}^2 {\rm d} \tau\,.
\end{equation}
for some constant $c > 0$ depending only on $(d, s, p, r)$.

From this inequality, it is easy to obtain an estimate like \eqref{est:B-bound}, in some interval $[0,T^*]$. For this, set $E(t) = \big\|  \big(R(t),\al(t), \bt(t)\big) \big\|_{\B}$ and define the time $T^*$ by 
\begin{equation*}
T^* = \sup \left\{ T > 0\; \Big|\quad  \int_0^T E(t)^2 \dt < E(0) \right\}\,.
\end{equation*}
Then, from \eqref{est:bound-Besov} we get estimate \eqref{est:B-bound} in $[0,T^*]$:
\begin{equation*}
\forall\,t\in[0,T^*]\,,\qquad\qquad 
E(t)\, \leq\, \frac{2}{c}\,E(0)\,.
\end{equation*}
In turn, this latter inequality provides also a lower bound for $T^*$. Indeed, for all $t \leq T^*$ we have
\begin{equation*}
\int_0^t E(\tau) {\rm d} \tau\, \leq\, \left( \frac{2}{c}\,E(0)\right)^2\,t\,,
\end{equation*}
so that, by definition of $T^*$, we must have $T^* \geq c^2 / \big(4\,E(0)\big)$.

This ends the proof of the proposition.
\end{proof}

We conclude this subsection by studying the regularity of the pressure term in system \eqref{eq:MHDab}. The next result will be fundamental in order to establish
regularity of the hydrodynamic pressure, namely $\nabla\Pi$ in system \eqref{i_eq:MHD-I}.

\begin{prop}\label{p:pressure}
Consider $(d, s, p, r)$, $(R_0, \al_0, \bt_0)$ and $(R,\al,\bt)$ exactly as in Proposition \ref{p:a-priori}. Let $T^*>0$ be defined as in that same proposition.

Then, the pressure terms $\pi_1$ and $\pi_2$ are such that $\nabla\pi_1\,=\,\nabla\pi_2\,\in L^\infty_{T^*}(\B)$, with the estimate
\begin{equation}
\| \nabla \pi_1 \|_{L^\infty_{T^*}(\B)}\, \lesssim\, \big\| (R_0, \al_0, \bt_0) \big\|_{\B}^2\,.
\end{equation}
\end{prop}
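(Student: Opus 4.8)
The plan is to derive an elliptic equation for the pressure by applying the divergence operator to the momentum equations of system \eqref{eq:MHDab}, and then to use elliptic regularity together with the Banach-algebra and transport estimates already established. First I would take the $\alpha$-equation in \eqref{eq:MHDab}, namely $\partial_t\alpha+(\beta\cdot\nabla)\alpha+\tfrac12 R\mathfrak{C}(\al+\bt)+\nabla\pi_1=0$, and apply $\D$ to it. Since $\D(\alpha)=0$ and $\D(\partial_t\alpha)=0$, the time derivative disappears and we obtain the relation
\begin{equation*}
-\Delta\pi_1\,=\,\D\big((\bt\cdot\nabla)\al\big)+\tfrac12\,\D\big(R\mathfrak{C}(\al+\bt)\big)\,.
\end{equation*}
Because $\D(\bt)=0$, the first term equals $\D\D(\bt\otimes\al)=\sum_{j,k}\partial_j\partial_k(\beta_j\alpha_k)$, which is a sum of two derivatives of a product of two functions in $\B$. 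Doing the same with the $\beta$-equation gives $-\Delta\pi_2=\D\D(\al\otimes\bt)+\tfrac12\D(R\mathfrak{C}(\al+\bt))$; since $\D\D(\bt\otimes\al)=\D\D(\al\otimes\bt)$ by symmetry of the double divergence, the two right-hand sides coincide, so $\Delta\pi_1=\Delta\pi_2$, and by the equivalence result (Theorem \ref{th:symm}, already invoking decay/integrability in our functional class) we conclude $\nabla\pi_1=\nabla\pi_2$. This takes care of the identification part of the statement.

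For the quantitative estimate, I would write $\nabla\pi_1=-\nabla(-\Delta)^{-1}\big[\D\D(\bt\otimes\al)+\tfrac12\D(R\mathfrak{C}(\al+\bt))\big]$. The first piece is $\nabla(-\Delta)^{-1}\D\D(\bt\otimes\al)$, a zeroth-order Fourier multiplier (a Riesz-type operator) applied to $\bt\otimes\al$; since $1<p<+\infty$, such operators are bounded on every $\B$ (this is exactly the content of Remark \ref{r:Leray} in the appendix on the Leray projector and related multipliers), so this term is $\lesssim\|\bt\otimes\al\|_{\B}\lesssim\|\al\|_{\B}\|\bt\|_{\B}$ using that $\B$ is a Banach algebra. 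The second piece is $\nabla(-\Delta)^{-1}\D$ applied to $R\mathfrak{C}(\al+\bt)$, i.e. essentially the Leray projector (minus identity) acting on $R\mathfrak{C}(\al+\bt)$; again boundedness of $\P$ on $\B$ and the algebra property give $\lesssim\|R\|_{\B}\|\al+\bt\|_{\B}$. Combining, for every $t\in[0,T^*]$,
\begin{equation*}
\|\nabla\pi_1(t)\|_{\B}\,\lesssim\,\|\al(t)\|_{\B}\|\bt(t)\|_{\B}+\|R(t)\|_{\B}\big(\|\al(t)\|_{\B}+\|\bt(t)\|_{\B}\big)\,\lesssim\,\big\|\big(R(t),\al(t),\bt(t)\big)\big\|_{\B}^2\,.
\end{equation*}
Taking the supremum over $t\in[0,T^*]$ and inserting the bound \eqref{est:B-bound} from Proposition \ref{p:a-priori} yields $\|\nabla\pi_1\|_{L^\infty_{T^*}(\B)}\lesssim C_1^2\|(R_0,\al_0,\bt_0)\|_{\B}^2$, which is the claimed inequality.

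I do not expect a genuine obstacle here: the proof is essentially bookkeeping once one recognizes that $\nabla(-\Delta)^{-1}\D\D$ and $\nabla(-\Delta)^{-1}\D$ are order-zero multipliers bounded on $\B$ for $1<p<+\infty$. The one point requiring a little care is the justification that $\nabla\pi_1$ (and $\nabla\pi_2$) are genuinely recovered by this elliptic formula rather than differing by a harmonic polynomial — this is where one uses that the solution lives in $L^\infty_{T^*}(\B)$, so that $\pi_1$, being determined up to such ambiguity, has a gradient that is itself a tempered distribution with the stated Besov regularity; the same integrability underlies the equivalence statement of Theorem \ref{th:symm} that we quote for $\nabla\pi_1=\nabla\pi_2$. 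A secondary minor point is checking that $\D\D(\bt\otimes\al)=\D\D(\al\otimes\bt)$, which is immediate since both equal $\sum_{j,k}\partial_j\partial_k(\alpha_j\beta_k)$. Beyond that, every inequality invoked is either the Banach-algebra property of $\B$ (Corollary \ref{c:tame}) or the continuity of Leray-type projectors on $\B$ (Remark \ref{r:Leray}), both available in the appendix.
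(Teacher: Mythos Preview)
There is a genuine gap in the order counting. The operator $\nabla(-\Delta)^{-1}\D\D$ that you apply to $\bt\otimes\al$ is \emph{not} of order zero: its symbol is $-i\xi_\ell\,\xi_j\xi_k/|\xi|^2$, which is homogeneous of degree $1$. What is zeroth-order is $(-\Delta)^{-1}\D\D$ (a double Riesz transform), but you still have the extra gradient in front. Consequently, your argument only yields $\|\nabla\pi_1\|_{B^{s-1}_{p,r}}\lesssim\|\bt\otimes\al\|_{\B}$, not the claimed $\B$ bound. Your treatment of the rotation piece, where $\nabla(-\Delta)^{-1}\D$ genuinely is order zero, is fine; the problem is solely with the bilinear term.

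The missing ingredient is the algebraic cancellation coming from the divergence-free constraints: since $\D(\al)=\D(\bt)=0$, one has $\partial_j\partial_k(\bt_j\al_k)=\partial_k\bt_j\,\partial_j\al_k$. This rewrites the troublesome second-order expression as a product of first derivatives, and because $B^{s-1}_{p,r}$ is itself a Banach algebra under the Lipschitz condition, it gives $\|\Delta\pi_1\|_{B^{s-1}_{p,r}}\lesssim\|\nabla\al\|_{B^{s-1}_{p,r}}\|\nabla\bt\|_{B^{s-1}_{p,r}}+\|R\|_{\B}\|\al+\bt\|_{\B}$. One then combines this high-frequency estimate with the easy low-frequency bound $\|\nabla\pi_1\|_{B^{s-1}_{p,r}}\lesssim\|(R,\al,\bt)\|_{\B}^2$ (which is exactly what your argument does produce) via $\|\nabla\pi_1\|_{\B}\lesssim\|\Delta_{-1}\nabla\pi_1\|_{\B}+\|({\rm Id}-\Delta_{-1})\nabla\pi_1\|_{\B}\lesssim\|\nabla\pi_1\|_{B^{s-1}_{p,r}}+\|\Delta\pi_1\|_{B^{s-1}_{p,r}}$. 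This is how the paper closes the estimate; without the cancellation you are one derivative short.
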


\begin{proof}
We already know from Theorem \ref{th:symm} that, in our functional framework, one has $\nabla\pi_1=\nabla\pi_2$. So, it is enough to establish Besov bounds for that quantity.

System \eqref{eq:MHDab} may be rewritten as
\begin{equation*}
\partial_t \al + \D (\al \otimes \bt) + \frac{1}{2} R \mathfrak{C} (\al + \bt) = - \nabla \pi_1\,.
\end{equation*}
By applying the projector $\Q\,:=\,{\rm Id} - \mathbb{P}$ to this equation, we immediately have a $B^{s-1}_{p, r}$ estimate for $\nabla \pi_1$: since $B^{s-1}_{p, r}$ is a Banach algebra (by condition \eqref{i_eq:Lip} above and Corollary \ref{c:tame} below) on which the operator $\Q$ acts continuously, it follows that
\begin{equation}\label{eq:PressureLFestimate}
\| \nabla \pi_1 \|_{L^\infty_T(B^{s-1}_{p, r})} \lesssim \big\| R \mathfrak{C} (\al + \bt) \big\|_{L^\infty_T(\B)} + \| \al \otimes \bt \|_{L^\infty_T(\B)} \lesssim \big\| (R, \al, \bt) \big\|_{L^\infty_T(\B)}^2\,.
\end{equation}

The core of this proof is therefore finding $\B$ regularity. This can be done with the help of algebraic cancellations induced by the fact that $\al$ and $\bt$ are divergence free.
By taking the divergence of equation \eqref{eq:MHDab}, we see that the Laplacian $\Delta \pi_1$ reads
\begin{equation*}
- \Delta \pi_1 = \frac{1}{2} \D \big( R \mathfrak{C} (\al + \bt) \big) + \partial_i \partial_j\left(\al_i \bt_j\right)\,,
\end{equation*}
where there is an implicit sum on both of the repeated indices $i, j \in \{ 1, ..., d \}$. We will estimate $\Delta \pi_1$ in the space $B^{s-1}_{p, r}$. Owing to the fact that both $\al$ and $\bt$ are divergence free, we have
\begin{equation*}
\partial_i \partial_j (\al_i \bt_j) = \partial_i \al_j \partial_j \bt_i\,,
\end{equation*}
so that, thanks to the tame estimates from Corollary \ref{c:tame}, we have
\begin{equation}\label{eq:PressureHFestimate}
\| \Delta \pi_1 \|_{B^{s-1}_{p, r}} \lesssim \| R \|_{\B} \| \al + \bt \|_{\B} + \| \nabla \al \|_{B^{s-1}_{p, r}}  \| \nabla \bt \|_{B^{s-1}_{p, r}}\,.
\end{equation}

To end the proof, it only remains to combine the low order estimate \eqref{eq:PressureLFestimate} with the high order one \eqref{eq:PressureHFestimate}. We have, by separating low and high frequencies in $\nabla \pi_1$,
\begin{equation*}
\| \nabla \pi_1 \|_{\B} \lesssim \| \Delta_{-1} \nabla \pi_1 \|_{\B} + \big\| ({\rm Id} - \Delta_{-1}) \nabla \pi_1 \big\|_{\B}\,,
\end{equation*}
and the Bernstein inequalities then give
\begin{equation*}
\| \nabla \pi_1 \|_{\B} \lesssim \| \nabla \pi_1 \|_{B^{s-1}_{p, r}} + \| \Delta \pi_1 \|_{B^{s-1}_{p, r}}\,,
\end{equation*}
which finally implies the result.
\end{proof}

\subsection{Proof of the continuation criterion}\label{ss:ContCritPFinite}

In this section, we seek to prove a continuation criterion for the solutions of \eqref{eq:MHDab}, or equivalently of \eqref{eq:MHDabAPriori}, in $\B$.
Recall that we always assume conditions \eqref{i_eq:Lip} and \eqref{i_eq:p} on the indices $(s,p,r)$.

\begin{prop}\label{p:ContFinite}
Let $(R_0,\al_0, \bt_0) \in \left(\B\right)^3$, with $\D(\al_0) = \D(\bt_0) = 0$. Given a time $T > 0$, let $(R,\al, \bt)$ be a solution of \eqref{eq:MHDabAPriori} on $[0,T[\,$, related
to that initial datum and belonging to $\left(L^\infty_t(\B)\right)^3$ for any $0\leq t<T$. Assume that
\begin{equation}\label{eq:ContCritPFinite}
\int_0^T \Big( \|\nabla \al \|_{L^\infty} + \|\nabla \bt \|_{L^\infty} \Big) \dt < +\infty\,.
\end{equation}
Then $(R,\al, \bt)$ can be continued beyond $T$ into a solution of \eqref{eq:MHDabAPriori} with the same regularity.
\end{prop}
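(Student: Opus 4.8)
The plan is to run a standard continuation argument based on the \emph{a priori} estimates from Proposition~\ref{p:a-priori}, but upgraded so that the bound on $[0,T[$ is controlled solely by the integral quantity $\int_0^T\big(\|\nabla\al\|_{L^\infty}+\|\nabla\bt\|_{L^\infty}\big)\dt$ rather than by the full $\B$-norm. The point is that once we know $(R,\al,\bt)$ stays bounded in $\B$ up to time $T$, i.e. $\sup_{t<T}\|(R(t),\al(t),\bt(t))\|_{\B}<+\infty$, we may pick any $t_0<T$ close to $T$, take $(R(t_0),\al(t_0),\bt(t_0))\in(\B)^3$ as a new initial datum, and apply the local existence part (Theorem~\ref{th:BesovWP} / Proposition~\ref{p:a-priori}) to extend the solution by a fixed increment; since the existence time only depends on the $\B$-norm of the data, which is uniformly bounded near $T$, this increment can be taken bounded below uniformly in $t_0$, and the solution is thus continued past $T$. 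So the whole matter reduces to the \emph{a priori} bound on $[0,T[$.

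**The a priori bound via a logarithmic estimate.** First I would revisit the estimates in the proof of Proposition~\ref{p:a-priori}, replacing all factors $\|\nabla\al\|_{L^\infty}$, $\|\nabla\bt\|_{L^\infty}$ by the genuine Lipschitz seminorms (which appear naturally in the commutator lemmas, Lemma~\ref{l:commB} and Lemma~\ref{l:CommBCD}), and keeping the $\B$-norm only where it is truly needed. The transport-equation estimates in Besov spaces at the critical-type regularity $s\geq 1+d/p$ come with the well-known logarithmic loss: for a divergence-free transport field $v$ with $v\in L^1_t(\B)$ one controls $\|f(t)\|_{\B}$ by $\|f_0\|_{\B}$ plus $\int_0^t\big(\|\nabla v\|_{L^\infty}+\text{(source)}\big)\|f\|_{\B}\,d\tau$ possibly with an extra $\log\big(e+\|f\|_{\B}/\|f_0\|_{\B}\big)$ factor (see the transport estimates recalled in the Appendix). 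Applying this to the three equations of \eqref{eq:MHDabAPriori}, with transport fields $\tfrac12(\al+\bt)$, $\bt$, $\al$ respectively, and estimating the source terms — the commutators $[\bt\cdot\nabla,\P]\al$, $[\al\cdot\nabla,\P]\bt$ by Lemma~\ref{l:commB}, and the rotation term $\tfrac12\P\big(R\mathfrak{C}(\al+\bt)\big)$ by the Banach-algebra property of $\B$ and boundedness of $\P$ — yields an inequality of the schematic form
\begin{equation*}
E(t)\,\leq\, C\,E(0)\,+\,C\int_0^t \Big(\|\nabla\al(\tau)\|_{L^\infty}+\|\nabla\bt(\tau)\|_{L^\infty}\Big)\,E(\tau)\,\log\!\Big(e+\frac{E(\tau)}{E(0)}\Big)\,d\tau,
\end{equation*}
where $E(t)=\|(R(t),\al(t),\bt(t))\|_{\B}$. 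An Osgood-type lemma (again available from the Appendix material on transport equations) then turns the finiteness of $\int_0^T(\|\nabla\al\|_{L^\infty}+\|\nabla\bt\|_{L^\infty})\dt$ into a finite bound for $\sup_{t<T}E(t)$.

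**Main obstacle.** The delicate point is the $R$-equation together with the rotation coupling. For $\al$ and $\bt$ the source term $\P(R\mathfrak{C}(\al+\bt))$ is quadratic and involves $R$ in its full $\B$-norm, so naively one only gets $E(t)\lesssim E(0)+\int_0^t E(\tau)^2\,d\tau$ — a genuine quadratic feedback, insufficient to close against a mere $L^1$-in-time Lipschitz bound. The fix is to treat the density differently: estimate $R$ first, using that its transport field $\tfrac12(\al+\bt)$ is Lipschitz with $L^1$-in-time control, which already gives a finite bound on $\sup_{t<T}\|R(t)\|_{\B}$ by the logarithmic transport estimate alone (the $R$-equation has no source). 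Once $\|R\|_{L^\infty_T(\B)}=:M_R<+\infty$ is secured, the rotation term in the $\al,\bt$ equations becomes \emph{linear} in $(\al,\bt)$, of size $\lesssim M_R\,\|\al+\bt\|_{\B}$, and the commutator sources are linear up to the Lipschitz factors; so the estimate for $(\al,\bt)$ closes via Gr\"onwall/Osgood with the data $\int_0^T(\|\nabla\al\|_{L^\infty}+\|\nabla\bt\|_{L^\infty})\dt<+\infty$ and the now-known constant $M_R$. I would therefore organize the proof as: (1)~bootstrap bound for $R$; (2)~bound for $(\al,\bt)$ using step (1); (3)~pressure regularity on $[0,T[$ via Proposition~\ref{p:pressure} applied on each $[0,t]$; (4)~the short continuation argument restarting from $t_0<T$. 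Step (2), and specifically arranging the logarithmic transport estimate so that it genuinely closes rather than producing a quadratic blow-up, is where the real work lies.
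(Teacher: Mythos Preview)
Your decoupling strategy in Step~(1) does not work as stated. The transport estimate for $R$ in $\B$ at regularity $s\geq 1+d/p$ \emph{cannot} be closed using only $\int_0^T\|\nabla(\al+\bt)\|_{L^\infty}\,dt$: the commutator bound of Lemma~\ref{l:CommBCD} reads
\[
2^{js}\big\|[\tfrac12(\al+\bt)\cdot\nabla,\Delta_j]R\big\|_{L^p}\;\lesssim\;c_j\Big(\|\nabla(\al+\bt)\|_{L^\infty}\|R\|_{\B}+\|\al+\bt\|_{\B}\,\|\nabla R\|_{L^\infty}\Big),
\]
and the second term carries a genuine factor $\|\al+\bt\|_{\B}$, so the $R$-equation remains coupled to $(\al,\bt)$ at the top-order Besov level. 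There is no ``logarithmic transport estimate'' that replaces this $\|\al+\bt\|_{\B}$ by a Lipschitz norm when $s\geq 1+d/p$; the logarithmic mechanism you allude to goes the other way (bounding $\|\nabla v\|_{L^\infty}$ by $\log\|v\|_{\B}$ times a BMO/curl-type norm), and is not what is needed here. Consequently the scheme ``bound $\|R\|_{L^\infty_T(\B)}$ first, then treat the rotation term as linear in $(\al,\bt)$'' breaks down at its first step.

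The paper's fix is different: instead of trying to propagate the full $\B$-norm of $R$ first, one secures \emph{lower-order} norms a priori --- $\|R\|_{L^\infty}$ (conserved), $\|\nabla R\|_{L^\infty}$ (exponential in the Lipschitz integral, by differentiating the transport equation), and $\|(\al,\bt)\|_{L^p}$ (a simple $L^p$ transport estimate) --- all of which close using only \eqref{eq:ContCritPFinite}. The second key point, which you have not identified, is a refined paraproduct estimate for the rotation term,
\[
\|R\,\mathfrak{C}(\al+\bt)\|_{\B}\;\lesssim\;\|R\|_{L^\infty}\,\|\al+\bt\|_{\B}\;+\;\big(\|\al+\bt\|_{L^p}+\|\nabla(\al+\bt)\|_{L^\infty}\big)\,\|R\|_{\B},
\]
obtained by splitting $\mc T_{\mathfrak{C}(\al+\bt)}R$ into low and high frequencies; this avoids the naive tame estimate (which would produce an uncontrolled $\|\al+\bt\|_{L^\infty}\|R\|_{\B}$) and renders every coefficient in the resulting coupled inequality for $E(t)=\|(R,\al,\bt)\|_{\B}$ either a Lipschitz norm or one of the pre-controlled low-order quantities. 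A single linear Gr\"onwall on $E$ --- no Osgood argument is needed --- then gives $\sup_{t<T}E(t)<+\infty$, and your Step~(4) finishes the proof.
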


\begin{rmk}
This theorem implies that the lifespan of solutions in $\B$ (with $(s, p, r)$ satisfying the Lipschitz condition, \tsl{i.e.} condition \eqref{i_eq:Lip} above) does not depend on the values of $(s, p, r)$,
thanks to the embeddings $\B \hookrightarrow B^{1 - d/p}_{\infty, r} \hookrightarrow B^1_{\infty, 1} \hookrightarrow W^{1, \infty}$.
\end{rmk}

\begin{proof}
By standard arguments, it is enough to show that a solution $(R,\al, \bt)$ remains bounded in $L^\infty_T(\B)$ as long as $T$ satisfies \eqref{eq:ContCritPFinite}.
This is mainly a matter of rewriting the inequalities of the previous proof in a more precise way.

Let $(R,\al, \bt)$ be a solution of \eqref{eq:MHDabAPriori} in $[0,T[\,$, related to an initial datum as in Proposition \ref{p:ContFinite}. We already know from Lemma \ref{l:CommBCD} that
\begin{multline}\label{eq:ContCommut2}
2^{js} \Big( \left\| \big[ \bt \cdot \nabla, \, \Delta_j\big] \al \right\|_{L^p} + \left\| \big[ \al \cdot \nabla, \, \Delta_j\big] \bt \right\|_{L^p} + \left\| \big[ (\al + \bt) \cdot \nabla, \Delta_j \big] R \right\|_{L^p} \Big)\\
\lesssim c_j(t) \left( \|\nabla \al\|_{L^\infty} \|\bt\|_{\B} + \|\al\|_{\B} \| \nabla \bt \|_{L^\infty}\right. \\
\qquad \qquad + \left.\|R\|_{\B} \|\nabla (\al + \bt)\|_{L^\infty} + \|\al + \bt\|_{\B} \|\nabla R\|_{L^\infty} \right)\,,
\end{multline}
for some sequence $\big(c_j(t)\big)_j$ belonging to the unit sphere of $\ell^r$. Similarly, from Lemma \ref{l:commB} we have
\begin{multline}\label{eq:ContCommut}
2^{js} \Big( \left\| \Delta_j \big[ \bt \cdot \nabla , \mathbb{P} \big] \al  \right\|_{L^p} + \left\| \Delta_j \big[ \al \cdot \nabla , \mathbb{P} \big] \bt  \right\|_{L^p} \Big) \\
\lesssim c_j(t) \left( \| \nabla \al \|_{L^\infty} \| \bt \|_{\B} + \|\al\|_{\B} \| \nabla \bt \|_{L^\infty}\right).
\end{multline}

Finally, we look at the rotation term $R \mathfrak{C}(\al + \bt)$. We use here the paraproduct decomposition \eqref{eq:bony}, but we need to do better than the tame estimates
of Corollary \ref{c:tame} if we want to avoid the appearance of an unpleasant $L^\infty$ norm on $\alpha$ and $\beta$.
We start by using Proposition \ref{p:op} to obtain
\begin{equation*}
\left\|\mc T_{R}\mf C(\al+\bt)\right\|_{\B}\,+\,\left\|\mc R\big(R,\mf C(\al+\bt)\big)\right\|_{\B}\,\lesssim\,\left\|R\right\|_{L^\infty}\,\left\|\al+\bt\right\|_{\B}\,,
\end{equation*}
where we have used also the embedding $L^\infty\hookrightarrow B^0_{\infty,\infty}$. Next, we write the remaining term as
$$
\mc T_{\mf C(\al+\bt)}R\,=\,\mc T_{\Delta_{-1}\mf C(\al+\bt)}R\,+\,\mc T_{(\Id-\Delta_{-1})\mf C(\al+\bt)}R\,.
$$
By Proposition \ref{p:op} again and Bernstein inequalities, we easily get
\[
\left\|\mc T_{\Delta_{-1}\mf C(\al+\bt)}R\right\|_{\B}\,\lesssim\,\left\|\Delta_{-1}(\al+\bt)\right\|_{L^\infty}\,\|R\|_{\B}\,\lesssim\,\left\|(\al+\bt)\right\|_{L^p}\,\|R\|_{\B}\,.
\]
Next, we write
$$
\mc T_{(\Id-\Delta_{-1})\mf C(\al+\bt)}R\,=\,\sum_{j}S_{j-1}(\Id-\Delta_{-1})\mf C(\al+\bt)\;\Delta_jR\,.
$$
Lemma 2.84 of \cite{BCD} states that it is enough to bound the $L^p$ norm of each term of the previous sum. At this point, we observe that, for each $j\in\N$, the (smooth) symbol of the operator
$S_j(\Id-\Delta_{-1})$ is supported away from the origin. Hence, we can apply the second Bernstein inequality to get
\begin{align*}
\left\|S_{j-1}(\Id-\Delta_{-1})\mf C(\al+\bt)\;\Delta_jR\right\|_{L^p}\,&\lesssim\,\left\|S_{j-1}(\Id-\Delta_{-1})\nabla\mf C(\al+\bt)\right\|_{L^\infty}\,\left\|\Delta_j R\right\|_{L^p} \\
&\lesssim\,\left\|\nabla(\al+\bt)\right\|_{L^\infty}\,\left\|\Delta_j R\right\|_{L^p}\,.
\end{align*}
From those bounds, we infer that
$$
\left\|\mc T_{\mf C(\al+\bt)}R\right\|_{\B}\,\lesssim\,\big(\left\|\al+\bt\right\|_{L^p}\,+\,
\left\|\nabla(\al+\bt)\right\|_{L^\infty}\big)\,\left\|R\right\|_{\B}\,,
$$
which finally implies
\begin{equation} \label{est:rotation}
\left\|R\, \mathfrak{C}(\al + \bt)\right\|_{\B}\,\lesssim\,
\left\|R\right\|_{L^\infty}\,\left\|\al+\bt\right\|_{\B}\,+\,
\big(\left\|\al+\bt\right\|_{L^p}\,+\,
\left\|\nabla(\al+\bt)\right\|_{L^\infty}\big)\,\left\|R\right\|_{\B}
\end{equation}

Putting estimates \eqref{eq:ContCommut2}, \eqref{eq:ContCommut} and \eqref{est:rotation} together, we are led to the following bound:
\begin{multline}\label{eq:ContGronPFinite2}
\big\|\big(R(t),\al(t), \bt(t)\big)\big\|_{\B} \\ \lesssim \big\| (R_0,\al_0, \bt_0) \big\|_{\B}
+\int^t_0\big(\|\nabla\al\|_{L^\infty}+\|\nabla\bt\|_{L^\infty}\big)\,\big\|\big(R,\al,\bt\big)\big\|_{\B} {\rm d}\t + \\
 + \int_0^t \left\{\left(\|R\|_{L^\infty}+\|\nabla R\|_{L^\infty} \right) \|\al + \bt\|_{\B} + \|R\|_{\B} \|\al + \bt\|_{L^p}\right\} {\rm d} \tau\,.
\end{multline}

We now seek for bounds for the $L^p$ and $L^\infty$ norms appearing in the last line of \eqref{eq:ContGronPFinite2}.
First of all, because $R$ solves a pure transport equation by the divergence-free vector field $\frac{1}{2}(\al + \bt)$, we have the bound
\begin{equation} \label{est:R_inf}
\|R(t)\|_{L^\infty} \leq \|R_0\|_{L^\infty} \leq \|R_0\|_{\B}\,,
\end{equation}
thanks to the embedding $\B \hookrightarrow B^{s-1}_{p, r} \hookrightarrow L^\infty$. Next, differentiating the equation for $R$, \tsl{i.e.} the first equation
of \eqref{eq:MHDabAPriori}, with respect to $x_j$, for all $j=1\ldots d$, we find
$$
\d_t\d_jR\,+\,\frac{1}{2}\,(\al+\bt)\cdot\nabla\d_jR\,=\,-\d_j(\al+\bt)\cdot\nabla R\,,
$$
from which we immediately deduce the estimate
\begin{equation} \label{est:DR_inf}
\left\|\nabla R(t)\right\|_{L^\infty}\,\leq\,\left\|R_0\right\|_{\B}\,\exp\left(\int^t_0\left(\|\nabla\al\|_{L^\infty}+\|\nabla\bt\|_{L^\infty}\right)\,{\rm d}\t\right)\,.
\end{equation}
Finally, after remarking that
$$
\left\|[\bt \cdot \nabla, \mathbb{P}]\al\right\|_{L^p}\,\leq\,\left\|\bt\cdot\nabla\al\right\|_{L^p}\,+\,\left\|\P\left(\bt\cdot\nabla\al\right)\right\|_{L^p}\,\leq\,C\,\left\|\bt\right\|_{L^p}\,\left\|\nabla \al\right\|_{L^\infty}
$$
and analogously for the other commutator term $[\al \cdot \nabla, \mathbb{P}]\bt$, standard $L^p$ estimates for transport equation with divergence-free vector fields yield
\begin{equation} \label{est:a-b_p}
\left\|\big(\al(t),\bt(t)\big)\right\|_{L^p}\,\leq\,\left\|\big(\al_0,\bt_0\big)\right\|_{L^p}\,+\,C\int^t_0\big(\|R\|_{L^\infty}+\left\|\nabla \al\right\|_{L^\infty}+\left\|\nabla \bt\right\|_{L^\infty}\big)\,\left\|\big(\al,\bt\big)\right\|_{L^p}\,{\rm d}\t\,.
\end{equation}

Therefore, thanks to assumption \eqref{eq:ContCritPFinite} and Gr\"onwall's lemma, we deduce from \eqref{est:R_inf}, \eqref{est:DR_inf} and \eqref{est:a-b_p} above that
$$
\sup_{t\in[0,T[}\Big(\|R(t)\|_{L^\infty}\,+\,\|\nabla R(t)\|_{L^\infty}\,+\,\left\|\big(\al(t),\bt(t)\big)\right\|_{L^p}\Big)\,\leq\,C\,.
$$
Plugging this bound in \eqref{eq:ContGronPFinite2} gives, for all $t\in[0,T[\,$, the inequality
\begin{equation*}
\big\|\big(R(t),\al(t), \bt(t)\big)\big\|_{\B}\,\leq\,\big\| (R_0,\al_0, \bt_0) \big\|_{\B}
+C\int^t_0\big(\|\nabla\al\|_{L^\infty}+\|\nabla\bt\|_{L^\infty}+1\big)\big\|\big(R,\al,\bt\big)\big\|_{\B} {\rm d}\t,
\end{equation*}
which finally guarantees the desired property:
$$
\sup_{t\in[0,T[}\big\|\big(R(t),\al(t), \bt(t)\big)\big\|_{\B}\,\leq\,C\,.
$$
As already said at the beginning of the proof, by standard arguments and uniqueness of solutions, this latter inequality implies the result.
\end{proof}

\subsection{End of the proof} \label{ss:end}

Let us sum up what we have obtained so far, and close the proof to Theorems \ref{th:BesovWP} and \ref{th:cont-crit}.

\paragraph{Uniqueness.}
When $R=0$, uniqueness of solutions to system \eqref{i_eq:MHD-I} in the class $\B$, under conditions \eqref{i_eq:Lip} and \eqref{i_eq:p} was shown in \cite{MY}. This was achieved by using (somehow implicitly)
Els\"asser variables. This is enough, in view of the equivalence established in Theorem \ref{th:symm} above. Our case $R\neq 0$ can be treated with simple modifications, so, for the sake of brevity, we will not give details here.

\paragraph{Existence.}
Also for this, we invoke the result of \cite{MY}, valid in the case $R=0$, in particular for what concerns the construction of smooth approximate solutions to \eqref{i_eq:MHD-I} and their convergence to a true solution of that problem. Therefore, we limit ourselves to show only \tsl{a priori} bounds for regular solutions $(R,U,B)$ in Besov norms: those estimates are given by Proposition \ref{p:a-priori} for solutions $(R,\alpha,\beta)$ to system \eqref{eq:MHDabAPriori}, and simple algebraic manipulations yield analogous bounds for $(R,U,B)$.

\paragraph{Besov regularity of the time derivatives and of the pressure term.}

The Besov regularity $L^\infty_{T}(B^{s-1}_{p, r})$ of the time derivatives $(\d_tR,\d_tU,\d_tB)$ follows by applying the Leray projector $\P$ to equations \eqref{i_eq:MHD-I}
and using the Besov regularity for $(R,U,B)$ and product rules in Besov spaces. 
On the other hand, Proposition \ref{p:pressure} describes the $L^\infty_T(\B)$ regularity on the pressure terms $\nabla \pi_1=\nabla\pi_2$. Now, owing to the second part of Theorem \ref{th:symm}
and the fact that $\B$ is an algebra, we deduce that also $\nabla\Pi\in L^\infty_T(\B)$.

\paragraph{Time regularity properties.}
Once Besov regularity has been established for the pressure terms in \eqref{eq:MHDab},
time regularity properties for $(R,\al,\bt)$ are direct consequences of the solution's Besov regularity and standard results on transport equations in Besov spaces
(see \tsl{e.g.} Theorem \ref{th:transport} in the Appendix). From this, one immediately infers time regularity also for $U$ and $B$, as stated in Theorem \ref{th:BesovWP}.
At this point, passing to the Els\"asser formulation again and repeating the analysis explained above,
one can bootstrap the time regularity of the time derivatives $(\d_tR,\d_tU,\d_tB)$ and of the pressure term, gaining time continuity for those quantities.

\paragraph{Lower bound on the lifespan of the solutions, and continuation criterion.}
These are straightforward consequences of the corresponding results established in Propositions \ref{p:a-priori} and \ref{p:ContFinite}, together with the equivalence of the systems \eqref{i_eq:MHD-I} and \eqref{eq:MHDabAPriori} provided by Theorem \ref{th:symm}.


\section{Asymptotic results} \label{s:res-as}

Here we show that, in space dimension $d=2$, the quasi-homogeneous system \eqref{i_eq:MHD-I} can be rigorously derived from a density-dependent incompressible MHD system with Coriolis force,
in a certain asymptotic regime.
For the sake of completeness, we establish also a corresponding result for the viscous and resistive case, supplementing in this way the statements of \cite{Cobb-F} with a quantitative estimates
and precise rates of convergence.

\subsection{The primitive system} \label{ss:model}

Assume the spatial domain $\Omega$ to be either the whole space $\R^2$ or the torus $\T^2$. In $\R_+\times\Omega$, we consider the following \emph{non-homogeneous
incompressible MHD system} with Coriolis force:
\begin{equation}\label{MHD1}
\begin{cases}
\d_t\rho+\div\big(\rho\,u\big)\,=\,0 \\[1ex]
\partial_t\big(\rho u\big)+\D\big(\rho u \otimes u\big)+\dfrac{1}{\veps}\nabla \Pi+\dfrac{1}{\veps} \rho u^\perp\,=\,h(\veps)\D\big(\nu(\rho) \nabla u\big)+
\D\big(b \otimes b\big)-\nabla \dfrac{|b|^2}{2} \\[1ex]
\partial_t b + \D\big(u \otimes b\big) -\div\big(b \otimes u\big)\, =\,h(\veps) \nabla^\perp\big(\mu (\rho) \nabla\times b\big) \\[1ex]
\div u\,=\,\div b\, =\, 0\,.
\end{cases}
\end{equation}

In the previous system, $\rho\geq0$ represents the density of the fluid, $u\in\R^2$ its velocity field and $\Pi\in\R$ its (hydrodynamic) pressure field.
The vector $b\in\R^2$ represents an external magnetic field, which interacts with the fluid flow.
The viscosity coefficient $\nu$ and the resistivity coefficient $\mu$ are assumed to be continuous and non-degenerate: more precisely, they satisfy 
\begin{equation} \label{hyp:nu-mu}
\nu\,,\;\mu\,\in\,C^0(\R_+)\,,\quad\qquad\mbox{ with, }\;\forall\;\rho\geq0\,,\quad \nu(\rho)\geq\nu_*>0\quad\mbox{ and }\quad\mu(\rho)\geq\mu_*>0\,,
\end{equation}
for some positive real numbers $\nu_*$ and $\mu_*$. The function $h$ satisfies either $h\equiv 1$ (see Theorem \ref{THSt}) or 
$h\in C^0([0,1])$, with $h(\veps) > 0$ for $\veps>0$ and $h(0)=0$ (see Theorem \ref{th:InvConv}).

On the other hand, the term $\rho\mf C(u)\,:=\,\rho\,u^\perp$ is related to the Coriolis force acting on the fluid;
its presence in the equations is relevant for large-scale flows, typically at the surface of a planetary or stellar body. In that case, the rotation speed of the body is typically high compared to the other kinetic parameters. In view of these considerations,
$\rho \mf C(u)$ is penalised by multiplication by $\eps^{-1}$, where $\eps>0$ is a small parameter corresponding to the so-called \emph{Rossby number}. As is customary in this kind of problem, the pressure term is also penalised by the same factor; we refer to \cite{CDGG}, \cite{F-G} for details.

\medbreak
For any $\eps>0$, we consider initial data $\big(\rho_{0,\eps},u_{0,\eps},b_{0,\eps}\big)$ to the previous system,
and a corresponding global in time finite energy weak solution
$\big(\rho_\eps,u_\eps,b_\eps\big)$, in the sense specified by Definition \ref{d:weak} below.
In \cite{Cobb-F}, we have performed the limit for $\eps\ra0^+$ in equations \eqref{MHD1}
for general \emph{ill-prepared} initial data, both in the quasi-homogeneous and in the fully non-homogeneous regimes.

Here, we will focus on the former setting, namely the \emph{quasi-homogeneous case},
where the density is supposed to be a small perturbation of a constant state, say $\bar{\rho} = 1$ for simplicity of exposition (see condition \eqref{def:in-dens} below). Our goal is twofold: first of all,
we want to supplement the result of \cite{Cobb-F} with quantitative rates of convergence, highlighting the structure of the solutions to \eqref{MHD1} for small values of $\eps>0$.
In addition, we also want to consider the regime of vanishing viscosity and magnetic resistivity $h(\veps) \rightarrow 0^+$, which allows us to recover the ideal system \eqref{i_eq:MHD-I} in the limit.
Notice that the techniques of \cite{Cobb-F} do not apply to the study of this latter case.

\subsection{Initial data, finite energy weak solutions} \label{ss:initial}

We supplement system \eqref{MHD1} with general ill-prepared initial data. Let us be more precise, and start by considering the density functions: for any $0<\eps\leq1$, we take
\begin{equation} \label{def:in-dens}
\rho_{0, \veps}\, =\, 1\, +\, \veps\, r_{0, \veps}\,, \qquad\qquad \text{ with } \qquad \big(r_{0, \veps}\big)_{\veps>0}\,\subset\, \big(L^2 \cap L^\infty\big)(\Omega)\,. 
\end{equation}
Without loss of generality we can suppose that there exist two constants $0<\rho_*\leq \rho^*$ such that, for all $\eps>0$, one has
\begin{equation} \label{hyp:no-vacuum}
0\,<\,\rho_*\,\leq\,\rho_{0,\veps}\,\leq\,\rho^*\,.
\end{equation}
Therefore, by the first equation in \eqref{MHD1}, vacuum can never appear.
For this reason, we can work directly with the initial velocity fields $u_{0,\eps}$ (which are therefore always well-defined). We assume 
\begin{equation*}
\big(u_{0, \veps}\big)_{\veps > 0}\,\subset\, L^2(\Omega)\quad \qquad \mbox{ and }\qquad \qquad \div u_{0,\veps}\,=\,0\,.
\end{equation*}
Finally, for the magnetic fields, we choose initial data such that
$$
\big(b_{0, \veps}\big)_{\veps > 0}\,\subset\, L^2(\Omega)\quad \qquad \mbox{ and }\qquad \qquad \div b_{0,\veps}\,=\,0\,.
$$
Before going on, we remark that the assumption $\rho_{0, \veps}\, =\, 1\, +\, \veps\, r_{0, \veps}$ simplifies the equations very much. Indeed, since $\div u_\eps=0$
for any $\eps>0$, at any later time we still have $\rho_\veps\, =\, 1\, +\, \veps\, r_\veps$, with $r_\veps$ solving a linear transport equation
\begin{equation*} \label{eq:r_e}
\partial_t r_\veps + \D(r_\veps u_\veps) = 0\,,\qquad\qquad\qquad \big(r_\veps\big)_{|t = 0} = r_{0, \veps}\,.
\end{equation*}

We are now ready to introduce the definition of finite energy weak solution to system \eqref{MHD1}.
Here below, the notation $C_w \big([0, T] ; L^2(\Omega)\big)$ stands for the space of time-dependent functions, taking values in $L^2$, which are continuous with respect to the weak topology of that space.
\begin{defi}\label{d:weak}
Let $T > 0$ and $\veps\in\,]0,1]$ be fixed. Let $\big(\rho_{0,\veps}, u_{0,\veps}, b_{0,\veps}\big)$ be an initial datum fulfilling the previous assumptions.
We say that $\big(\rho, u, b\big)$ is a
\emph{finite energy weak solution} to system \eqref{MHD1} in $[0, T] \times \Omega$, related to the previous initial datum, if the following conditions are verified:
\begin{enumerate}[(i)]
\item $\rho \in L^\infty\big([0, T] \times \Omega\big)$ and $\rho \in C\big([0,T];L^q_{\rm loc}(\Omega)\big)$ for all $1 \leq q < +\infty$;
\item $u \in L^\infty\big([0,T];L^2(\Omega)\big) \cap C_w \big([0, T] ; L^2(\Omega)\big)$, with $\nabla u \in L^2\big([0,T];L^2(\Omega)\big)$;
\item $b \in L^\infty\big([0,T];L^2(\Omega)\big) \cap C_w \big([0, T] ; L^2(\Omega)\big)$, with $\nabla b\in L^2\big([0,T];L^2(\Omega)\big)$;
\item the mass equation is satisfied in the weak sense: for any $\phi \in \mathcal{D}\big([0, T] \times \Omega\big)$, one has
\begin{equation*}
\int_0^T \int_\Omega \bigg\{  \rho \partial_t \phi + \rho u \cdot \nabla \phi  \bigg\} \text{d}x \text{d}t + \int_\Omega \rho_{0,\veps} \phi(0,\cdot) \text{d}x =\int_\Omega \rho(T) \phi (T,\cdot) \dx \,;
\end{equation*}
\item the divergence-free conditions $\D(u) = \D(b) = 0$ are satisfied in $\mathcal{D}'\big(\,]0, T[ \,\times \Omega\big)$;
\item the momentum and magnetic field equations are satisfied in the weak sense: for any $\psi \in \mathcal{D}\big([0, T]\times \Omega ; \mathbb{R}^2\big)$ such that $\D(\psi) = 0$,
one has
\begin{multline*}
\int_0^T \int_\Omega \left\{ \rho u \cdot \partial_t \psi + \big(\rho u \otimes u - b \otimes b\big) : \nabla \psi 
- \frac{1}{\veps} \rho u^\perp \cdot \psi - h(\veps) \nu(\rho) \nabla u :\nabla \psi  \right\} \text{d}x \text{d}t \\
+ \int_\Omega \rho_{0,\veps} u_{0,\veps}\cdot \psi(0,\cdot) \text{d}x = \int_\Omega \rho(T) u(T) \cdot \psi(T,\cdot) \dx \,,
\end{multline*}
and for all $\z \in \mathcal{D}\big([0, T] \times \Omega;\R^2\big)$ one has
\begin{multline*} 
\int_0^T \int_\Omega \bigg\{  b \cdot \partial_t \z + \big(u \otimes b - b \otimes u\big):\nabla \z + h(\veps) \mu(\rho) \big(\nabla\times b\big)\big(\nabla\times \z\big)\bigg\} \dx \text{d}t  \\
+\int_\Omega b_{0,\veps} \cdot \z(0,\cdot) \dx = \int_\Omega b(T) \cdot \z(T,\cdot) \dx \,;
\end{multline*}
\item for almost every $t \in [0, T]\,$, the following energy balance holds true:
\begin{multline*} 
\int_\Omega \Big( \rho(t) |u(t)|^2 + |b(t)|^2 \Big) \dx +  \int_0^t \int_\Omega h(\veps) \Big( \nu(\rho) |\nabla u|^2 +  \mu(\rho) |\nabla\times b|^2 \Big) \dx \text{d}\tau\, \\
\leq\, \int_\Omega \Big( \rho_{0,\veps}|u_{0,\veps}|^2 + |b_{0,\veps}|^2 \Big) \dx\,.
\end{multline*}
\end{enumerate}
The solution $\big(\rho, u,b\big)$ is said to be \emph{global} if the above conditions hold for all $T > 0$.
\end{defi}

Before going on, we still have a few definitions and assumptions to give.

First of all, owing to the previous uniform bounds for the initial data, we deduce that, up to an extraction, one has the weak convergence properties
\begin{equation}\label{EQcv}
u_{0, \veps} \wtend U_0 \quad \text{ in }\; L^2(\Omega)\,, \qquad r_{0, \veps} \wtend^* R_0 \quad \text{ in }\; \big(L^2 \cap L^\infty\big)(\Omega)\,,\qquad
 b_{0, \veps} \wtend B_0 \quad \text{ in }\; L^2(\Omega)\,,
\end{equation}
for suitable functions $U_0$, $R_0$ and $B_0$ belonging to the respective functional spaces.

In order to derive quantitative estimates for solutions to \eqref{MHD1}, we need more precise assumptions on the viscosity and resistivity coefficients than the ones in \eqref{hyp:nu-mu} above.
We start by a definition (see \tsl{e.g.} Section 2.10 of \cite{BCD} for details).

\begin{defi} \label{d:mod-cont}
A \emph{modulus of continuity} is a continuous non-decreasing function $\s:[0,1]\,\longrightarrow\,\R_+$ such that $\s(0)=0$.

Given a modulus of continuity $\s$, the space $\mc{C}_\s(\R)$ is defined as the set of real-valued functions $a\in L^\infty(\R)$
such that
$$
|a|_{\mc{C}_\mu}\,:=\, \sup_{x \,\in \, \mathbb{R}} \,\, \sup_{|y| \,\in\,]0,1]}\frac{\left|a(x+y)\,-\,a(x)\right|}{\s(|y|)}\,<\,+\infty\,.
$$
We also define $\|a\|_{\mc{C}_\s}\,:=\,\|a\|_{L^\infty}\,+\,|a|_{\mc{C}_\s}$.

\end{defi}

In view of the previous definition, beside \eqref{hyp:nu-mu}, we also assume that there exists a modulus of continuity $\s$ such that
$$
\nu\,,\;\mu\,\in\,\mc C_\s(\R)\,.
$$
Strictly speaking, we only need this in a neighbourhood of $\rho = 1$, but we formulate the previous global assumption for simplicity of presentation.

\subsection{The viscous and resistive case} \label{ss:visc-res}

Under the hypotheses formulated in the previous section, and for the choice $h\equiv1$ in \eqref{MHD1},
it was shown in \cite{Cobb-F} that the limit dynamics for $\veps\ra0^+$ is described by the quasi-homogeneous MHD system
\begin{equation}\label{EQLim}
\begin{cases}
\partial_t R \,+\, \D (R\,U)\, =\, 0 \\[1ex]
\partial_t U \,+\, \D (U \otimes U)\, +\, \nabla \left(\Pi\, +\, \dfrac{|B|^2 }{2} \right)\, +\, R\,U^\perp\, =\, \nu(1) \,\Delta U \,+\, \D (B \otimes B) \\[1ex]
\partial_t B \,+\, \D (U \otimes B \,-\, B \otimes U)\, =\, \mu(1)\, \Delta B \\[1ex]
\D (U) \,=\, \D(B) \,=\, 0\,,
\end{cases}
\end{equation}
for some suitable pressure function $\Pi$ and with the triplet $\big(R_0,U_0,B_0\big)$, identified in \eqref{EQcv} above, as initial datum.
In addition, we have shown that equations \eqref{EQLim} are well-posed (this is Theorem 5.1 in \cite{Cobb-F}). More precisely, the following theorem holds true.
\begin{thm}\label{th:WPL}
Consider $0 < \beta < 1$ and $(R_0, U_0, B_0) \in H^{1 + \beta}(\Omega)\times H^1(\Omega) \times H^1(\Omega)$.

For that initial datum, there is exactly one solution $(R, U, B)$ of \eqref{EQLim} in the energy space,
that is such that $R \in L^\infty\big(\R_+;(L^2 \cap L^\infty)(\Omega)\big)$ and  $U, B \in L^\infty\big(\R_+;L^2(\Omega)\big)$, with $\nabla U, \nabla B\in L^2\big(\R_+;H^1(\Omega)\big)$.
Moreover, this unique solution satisfies $R \in L^\infty_{\rm loc}\big(\R_+;H^{1+\gamma}(\Omega)\big)$ for all $\gamma < \beta$, and
$U, B \in L^\infty_{\rm loc}\big(\R_+;H^1(\Omega)\big) \cap L^2_{\rm loc}\big(\R_+;H^2(\Omega)\big)$.
\end{thm}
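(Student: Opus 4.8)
The plan is to prove Theorem~\ref{th:WPL} by a standard energy-method argument for the dissipative quasi-homogeneous MHD system \eqref{EQLim}, treating separately the construction of solutions (existence), the propagation of regularity, and uniqueness. Since full proofs of well-posedness of this kind of system are routine but technical, I would only sketch the scheme, the way the statement itself suggests (it refers to ``Theorem 5.1 in \cite{Cobb-F}'').

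\medskip

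\textbf{Existence of energy solutions.} First I would set up a Friedrichs or Galerkin approximation scheme for \eqref{EQLim}, regularising the transport equation for $R$ (e.g.\ by adding a vanishing diffusion $\eta\Delta R$, or by mollifying the transporting velocity) so that the approximate density stays in $L^2\cap L^\infty$ and the approximate $(U,B)$ stay in the energy space. The key a priori estimate is the energy balance: testing the $U$-equation with $U$ and the $B$-equation with $B$, the ideal convective terms $\D(U\otimes U)$, $\D(B\otimes B)$, $\D(U\otimes B - B\otimes U)$ cancel in the usual MHD way, the pressure drops by $\D(U)=0$, and crucially the Coriolis term vanishes since $\int_\Omega R\,U^\perp\cdot U\dx = 0$ pointwise (as $U^\perp\perp U$). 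This yields
\begin{equation*}
\|U(t)\|_{L^2}^2 + \|B(t)\|_{L^2}^2 + 2\nu(1)\int_0^t\|\nabla U\|_{L^2}^2 + 2\mu(1)\int_0^t\|\nabla B\|_{L^2}^2 \,\leq\, \|U_0\|_{L^2}^2 + \|B_0\|_{L^2}^2 ,
\end{equation*}
uniformly in the approximation parameter and globally in time. For $R$, the transport structure $\partial_t R + \D(RU)=0$ with $\D(U)=0$ gives $\|R(t)\|_{L^q}=\|R_0\|_{L^q}$ for every $q\in[2,+\infty]$, hence the uniform $L^\infty(\R_+;(L^2\cap L^\infty))$ bound. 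With these bounds, standard compactness (Aubin--Lions for $(U,B)$, using $\partial_t(U,B)$ bounds in a negative Sobolev space coming from the equations, plus a $\mathrm{div}$-curl or DiPerna--Lions argument to pass to the limit in the nonlinear products and in the transport equation for $R$) produces a global finite-energy weak solution in the claimed class.

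\medskip

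\textbf{Propagation of higher regularity.} Next, for data with $R_0\in H^{1+\beta}$, $U_0,B_0\in H^1$, I would propagate this regularity locally in time. Differentiating the equations and performing $H^1$ energy estimates on $(U,B)$, one controls $\tfrac{d}{dt}(\|\nabla U\|_{L^2}^2+\|\nabla B\|_{L^2}^2)$ by the dissipation $\nu(1)\|\Delta U\|_{L^2}^2 + \mu(1)\|\Delta B\|_{L^2}^2$ against nonlinear terms estimated by Gagliardo--Nirenberg in $2$D (the usual $\|\nabla u\|_{L^2}\|\Delta u\|_{L^2}\|\nabla u\|_{L^2}$-type products, closed via Young's inequality to absorb into the dissipation); the new Coriolis contribution $\int_\Omega R\,U^\perp\cdot(-\Delta U)$ is handled by $\|R\|_{L^\infty}\|U^\perp\|_{L^2}\|\Delta U\|_{L^2}$, again absorbed. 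For the density, one propagates $H^{1+\gamma}$ regularity ($\gamma<\beta$) of $R$ along the flow of $U$ using the transport estimate in Sobolev/Besov spaces (Theorem~\ref{th:transport} in the Appendix), which requires $\nabla U\in L^1_{\rm loc}(L^\infty)$; this follows in $2$D from $U\in L^2_{\rm loc}(H^2)$ together with a logarithmic interpolation, or directly from the parabolic smoothing $U\in L^1_{\rm loc}(W^{1,\infty})$. Combining, a Gr\"onwall argument gives $U,B\in L^\infty_{\rm loc}(H^1)\cap L^2_{\rm loc}(H^2)$ and $R\in L^\infty_{\rm loc}(H^{1+\gamma})$; since these bounds are only a priori finite on each compact time interval and the energy bound is global, one bootstraps them to all of $\R_+$.

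\medskip

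\textbf{Uniqueness.} Finally, for uniqueness I would take two solutions $(R_1,U_1,B_1)$ and $(R_2,U_2,B_2)$ with the same data, write the equations for the differences $\delta R, \delta U, \delta B$, and estimate $\|\delta U\|_{L^2}^2+\|\delta B\|_{L^2}^2+\|\delta R\|_{H^{-1}}^2$ (using the $H^{-1}$ norm for the density difference, since $R$ is only transported and not smoothed). The worst terms are $\int \delta U\cdot\nabla U_1\cdot\delta U$ and the like, bounded by $\|\nabla U_1\|_{L^\infty}$ or by $\|\nabla U_1\|_{L^2}\|\delta U\|_{L^4}^2$ and closed in $2$D by Ladyzhenskaya's inequality and the dissipation; the Coriolis difference $\int \delta R\, U_1^\perp\cdot\delta U + \int R_2\,\delta U^\perp\cdot\delta U$ contributes only the first term (the second vanishes), estimated via the $H^{-1}$-duality $\|\delta R\|_{H^{-1}}\|U_1^\perp\delta U\|_{H^1}$ with the $H^2$-regularity of $U_1$. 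Gr\"onwall then forces the differences to vanish.

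\medskip

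\textbf{Main obstacle.} The delicate point is the interplay between the low regularity of $R$ (no smoothing — it is merely transported, so it lives at best in $H^{1+\gamma}$ and is borderline for controlling the flow) and the terms $R\,U^\perp$, $\nabla\Pi$ in the momentum equation: one must make sure the Coriolis coupling does not destroy the energy estimate (it does not, thanks to the pointwise orthogonality $U^\perp\perp U$) and that in the uniqueness argument the density difference is measured in the right negative-regularity space so that the transport term for $\delta R$ closes without requiring $\nabla U$ beyond what parabolic regularity of $U$ provides. The $2$D restriction is exactly what makes all the nonlinear estimates close against the dissipation and what gives global-in-time control.
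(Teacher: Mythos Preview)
The paper does not actually give its own proof of this statement: Theorem~\ref{th:WPL} is simply quoted from the companion paper \cite{Cobb-F} (``this is Theorem~5.1 in \cite{Cobb-F}''), and the only further information here is the list of quantitative bounds in Proposition~\ref{p:LimitEstimates}, also cited from \cite{Cobb-F}. So there is no in-paper proof to compare your proposal against.

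That said, your sketch is the expected energy-method scheme and is fully consistent with the structure of the estimates recorded in Proposition~\ref{p:LimitEstimates}: a global $L^2$ energy inequality (where the Coriolis term drops by $U^\perp\cdot U=0$), conservation of $\|R\|_{L^p}$ by transport, an $H^1$ bound on $(U,B)$ with $L^2_T(H^2)$ from parabolic smoothing, and propagation of $H^{1+\gamma}$ regularity for $R$ via transport estimates controlled by $\int_0^T\|\nabla U\|_{H^1}$. The one place where your outline is slightly loose is the uniqueness step: estimating $\|U_1^\perp\,\delta U\|_{H^1}$ in the $H^{-1}$ duality for $\delta R$ asks for more than the $H^2$ regularity of $U_1$ alone (you also need control on $\nabla\delta U$, which is available from the dissipation), and the transport estimate for $\delta R$ in $H^{-1}$ requires $\nabla U_1\in L^1_T(L^\infty)$ or a suitable surrogate; in $2$D this is indeed obtainable from $U_1\in L^2_T(H^2)\cap L^\infty_T(H^1)$ by interpolation, but it deserves to be stated explicitly rather than buried.
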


With this theorem at hand, we are ready to state the quantitative convergence result in the viscous and resistive case.
\begin{thm}\label{THSt}
Let $h\equiv1$ in \eqref{MHD1} and $\nu$, $\mu$ be as in \eqref{hyp:nu-mu}. For a given modulus of continuity $\s$, assume in addition that $\nu, \mu \in C_\s(\mathbb{R})$.
Consider a sequence $\big(\rho_{0, \veps}, u_{0, \veps}, b_{0, \veps}\big)_{\veps > 0}$ of initial data satisfying the assumptions fixed
in Subsection \ref{ss:initial}, and let $\big(\rho_\veps,u_\veps,b_\veps\big)_{\veps>0}$ be a corresponding sequence of global in time finite energy weak solutions to system \eqref{MHD1}.
Define $M>0$ by
\begin{equation*} 
M\,:=\,\sup_{\eps>0}\| r_{0, \veps} \|_{L^\infty}\, +\,\sup_{\eps>0} \| u_{0, \veps} \|_{L^2}\, +\,\sup_{\eps>0} \| b_{0, \veps} \|_{L^2}\,.
\end{equation*}
Assume also that the triplet $\big(R_0, U_0, B_0\big)$, defined in \eqref{EQcv}, belongs to $H^{1 + \beta}(\Omega) \times H^1(\Omega) \times H^1(\Omega)$, for some $\beta\in\,]0,1[\,$,
and let $\big(R,U,B\big)$ be the corresponding unique solution to system \eqref{EQLim}, as given by Theorem \ref{th:WPL}.
Finally, set
$$
\de r_\eps\,:=\,r_\eps-R\,,\qquad \de u_\eps\,:=\,u_\eps-U\,,\qquad \de b_\eps\,:=\,b_\eps-B
$$
and, with analogous notation, $\de r_{0,\eps}\,:=\,r_{0,\eps}-R_0$,
$\de u_{0,\eps}\,:=\,u_{0,\eps}-U_0$
and $\de b_{0,\eps}\,:=\,b_{0,\eps}-B_0$.

Then, for all fixed times $T>0$, the following estimate holds true: for any $\eps>0$ and almost every $t\in[0,T]$,
\begin{multline}\label{EQRelIn}
\|\de r_\veps(t)\|_{L^2}^2 + \|\de u_\veps(t)\|_{L^2}^2 + \|\de b_\veps(t)\|_{L^2}^2+
\int_0^t\left\{ \left\| \nabla \de u_\veps\right\|_{L^2}^2+ \left\|\nabla\times\de b_\veps \right\|_{L^2}^2 \right\} \, {\rm d}\tau \\
\leq\, C\,\bigg\{\|\de r_{0, \veps}\|_{L^2}^2+\|\de u_{0, \veps}\|_{L^2}^2+\|\de b_{0, \veps}\|_{L^2}^2 +
\max\left\{\veps^2\,,\,\s^2(M \veps)\right\} \bigg\}\,,
\end{multline}
where the constant $C > 0$ depends on $T$, on the lower bounds $\nu_*$ and $\mu_*$ as well as on $|\nu|_{\mc C_\s}$ and $|\mu|_{\mc C_\s}$,
on the norms of the initial data $\| U_0 \|_{H^1}$, $\| B_0 \|_{H^1}$ and $\| R_0 \|_{H^{1+\beta}}$, and on $M$.
\end{thm}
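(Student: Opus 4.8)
The plan is to establish \eqref{EQRelIn} by a \emph{relative entropy} (modulated energy) argument, taking as ``test solution'' the regular-enough solution $(R,U,B)$ of the limit system \eqref{EQLim} furnished by Theorem \ref{th:WPL} and plugging it into the relative entropy inequality for the primitive system \eqref{MHD1} proved in Section \ref{s:entropy}. Concretely, I would work with the modulated energy
\[
\mc E_\veps(t)\,:=\,\frac12\int_\Omega\Big(\rho_\veps\,|u_\veps-U|^2\,+\,|b_\veps-B|^2\,+\,|r_\veps-R|^2\Big)\dx\,,
\]
which, by \eqref{hyp:no-vacuum}, satisfies $\mc E_\veps(t)\simeq\|\de r_\veps(t)\|_{L^2}^2+\|\de u_\veps(t)\|_{L^2}^2+\|\de b_\veps(t)\|_{L^2}^2$ and $\mc E_\veps(0)\lesssim\|\de r_{0,\veps}\|_{L^2}^2+\|\de u_{0,\veps}\|_{L^2}^2+\|\de b_{0,\veps}\|_{L^2}^2$, with constants depending only on $\rho_*$, $\rho^*$. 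Specialised to the triplet $(R,U,B)$, the relative entropy inequality of Section \ref{s:entropy} should read
\[
\mc E_\veps(t)\,+\,\int_0^t\!\!\int_\Omega\Big(\nu(\rho_\veps)\,|\nabla\de u_\veps|^2\,+\,\mu(\rho_\veps)\,|\nabla\times\de b_\veps|^2\Big)\dx\,{\rm d}\tau\,\leq\,\mc E_\veps(0)\,+\,\int_0^t\mc R_\veps(\tau)\,{\rm d}\tau\,,
\]
where $\mc R_\veps$ collects the consistency defect, that is, all the terms that would vanish if $(R,U,B)$ solved \eqref{MHD1} exactly. The rest of the proof then consists in bounding $\mc R_\veps$ term by term and closing by Gr\"onwall's lemma.

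The heart of the matter is that the $\veps^{-1}$--penalised Coriolis and pressure terms do not give singular contributions. Writing $\rho_\veps=1+\veps r_\veps$, the Coriolis force splits as $\veps^{-1}u_\veps^\perp+r_\veps u_\veps^\perp$; in dimension two $u_\veps^\perp$ is a gradient (because $\div u_\veps=0$), so, just like $\veps^{-1}\nabla\Pi$, the term $\veps^{-1}u_\veps^\perp$ is $L^2$--orthogonal to the divergence-free fields against which it is tested and drops out. Using $r_\veps=R+\de r_\veps$, $u_\veps=U+\de u_\veps$ and the pointwise identities $v^\perp\cdot v=0$, $U^\perp\cdot U=0$, the surviving Coriolis contribution reduces to the quadratic term $-\int_\Omega\de r_\veps\,\de u_\veps^\perp\cdot U\dx$, which I would bound by $\|U\|_{L^\infty}\big(\|\de r_\veps\|_{L^2}^2+\|\de u_\veps\|_{L^2}^2\big)$ (with $\|U\|_{L^\infty}\in L^2([0,T])$ thanks to the $L^2_{\rm loc}(H^2)$ bound of Theorem \ref{th:WPL}), plus an $O(\veps)$ remainder dealt with by Young's inequality. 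Completing the squares in the dissipation produces error terms proportional to $\nu(\rho_\veps)-\nu(1)$ and $\mu(\rho_\veps)-\mu(1)$, paired with $\nabla U$ and $\nabla\times B\in L^2([0,T];L^2)$; since $\s$ is non-decreasing and $\|r_\veps\|_{L^\infty}\leq M$, one has $\|\nu(\rho_\veps)-\nu(1)\|_{L^\infty}\leq|\nu|_{\mc C_\s}\,\s(\veps\|r_\veps\|_{L^\infty})\leq|\nu|_{\mc C_\s}\,\s(M\veps)$, and similarly for $\mu$, so Young's inequality turns these into a contribution of size $\s^2(M\veps)$ plus a small multiple of $\int(|\nabla\de u_\veps|^2+|\nabla\times\de b_\veps|^2)$, absorbed into the left-hand side once the Young parameter is below $\min\{\nu_*,\mu_*\}$. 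The remaining terms of $\mc R_\veps$ are either purely quadratic in $(\de r_\veps,\de u_\veps,\de b_\veps)$ --- of the form $\int(\de u_\veps\otimes\de u_\veps):\nabla U$, $\int(\de u_\veps\otimes\de b_\veps):\nabla B$, and so on, controlled by the two-dimensional Ladyzhenskaya and Young inequalities and leaving Gr\"onwall coefficients in $L^1([0,T])$ --- or of order $\veps$, coming from the factor $\rho_\veps-1=\veps r_\veps$ in the nonlinearities, and disposed of by Young as above.

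I would close the estimate for $\de r_\veps=r_\veps-R$ separately: subtracting the two transport equations gives $\partial_t\de r_\veps+u_\veps\cdot\nabla\de r_\veps+\D(R\,\de u_\veps)=0$, hence $\frac12\,\frac{{\rm d}}{{\rm d}t}\|\de r_\veps\|_{L^2}^2=-\int_\Omega\de r_\veps\,\nabla R\cdot\de u_\veps\dx$. The delicate point is the limited regularity of $R$, which lies only in $H^{1+\beta}$: then $\nabla R\in H^\beta\hookrightarrow L^{2/(1-\beta)}(\Omega)$ in dimension two, and I would pair it with $\de r_\veps\in L^2$ and $\de u_\veps\in L^{2/\beta}$, estimating the latter by the Gagliardo--Nirenberg inequality $\|\de u_\veps\|_{L^{2/\beta}}\lesssim\|\de u_\veps\|_{L^2}^\beta\,\|\nabla\de u_\veps\|_{L^2}^{1-\beta}$ and absorbing a small multiple of $\|\nabla\de u_\veps\|_{L^2}^2$ via Young's inequality; this yields a bound by $C\,\|R\|_{H^{1+\beta}}^{2/(1+\beta)}\big(\|\de r_\veps\|_{L^2}^2+\|\de u_\veps\|_{L^2}^2\big)$, and it is precisely here that $\beta>0$ is needed. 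Summing all the bounds and renaming constants after the various absorptions, I would arrive at
\[
\mc E_\veps(t)\,+\,c\int_0^t\Big(\|\nabla\de u_\veps\|_{L^2}^2+\|\nabla\times\de b_\veps\|_{L^2}^2\Big)\,{\rm d}\tau\,\leq\,C\,\mc E_\veps(0)\,+\,C\max\big\{\veps^2,\s^2(M\veps)\big\}\,+\,\int_0^t\phi(\tau)\,\mc E_\veps(\tau)\,{\rm d}\tau\,,
\]
with $\phi\in L^1([0,T])$ depending on $\nu_*$, $\mu_*$, $|\nu|_{\mc C_\s}$, $|\mu|_{\mc C_\s}$, $M$ and on the norms of $(R,U,B)$ from Theorem \ref{th:WPL}; Gr\"onwall's lemma combined with the norm equivalences above then gives \eqref{EQRelIn}.

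The hardest parts, in my view, are two. First, proving the relative entropy inequality of Section \ref{s:entropy} in a sufficiently precise form and with a dissipative term strong enough to absorb the bad pieces of $\mc R_\veps$; this also requires checking that the merely energy-class solution $(R,U,B)$ of \eqref{EQLim} --- with $R\in L^\infty_{\rm loc}(H^{1+\gamma})$ and $U,B\in L^\infty_{\rm loc}(H^1)\cap L^2_{\rm loc}(H^2)$ only --- has enough regularity to justify all the integrations by parts and product estimates involved. Second, the density estimate above: because $\de r_\veps$ is purely transported and gains no derivatives, one is forced to ``spend'' the extra regularity $R_0\in H^{1+\beta}$ through the Gagliardo--Nirenberg interpolation, and it is this mechanism that dictates the assumption $0<\beta<1$.
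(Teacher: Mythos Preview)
Your proposal is correct and follows essentially the same route as the paper: you use the relative entropy inequality of Section~\ref{s:entropy} with the regular solution $(R,U,B)$ of \eqref{EQLim} as test triplet, observe that the singular Coriolis contribution $\veps^{-1}U^\perp$ is a gradient and drops out, reduce the remainder to quadratic terms in $(\de r_\veps,\de u_\veps,\de b_\veps)$ plus $O(\veps)$ and $O(\s(M\veps))$ corrections, and close by Gr\"onwall. One small remark: your ``separate'' energy estimate on $\de r_\veps$ is already contained in the relative entropy inequality, since $|r_\veps-R|^2$ is part of $\mc E_\veps$; the term $-\int_\Omega\de r_\veps\,\de u_\veps\cdot\nabla R$ that you obtain by subtracting the transport equations is exactly the $J_1$ piece of the paper's remainder $\mc R_\veps$, so there is no need to treat it as an independent step. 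Also note that Theorem~\ref{th:WPL} only gives $R\in L^\infty_{\rm loc}(H^{1+\gamma})$ for $\gamma<\beta$, so in your H\"older/Gagliardo--Nirenberg argument you should work with some fixed $\gamma\in\,]0,\beta[\,$ rather than $\beta$ itself; this changes nothing in the final bound.
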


\begin{rmk} \label{r:moduli}
In the simpler case when $\mu, \nu \in C^1(\mathbb{R}_+)$, the last summand in the brackets (of the right-hand side of \eqref{EQRelIn}) becomes $O(\veps^2)$.
Note that the convergence does not improve when \tsl{e.g.} $\mu$ and $\nu$ are constant near $\rho = 1$. As we will see below, this is mainly due to the fact that $\rho_\veps = 1 + O(\veps)$.
\end{rmk}

The previous theorem immediately yields the following corollary, which is in fact a convergence result: based on strong convergence of the initial data, we may deduce strong convergence of the solutions of \eqref{MHD1} to the solution of \eqref{EQLim}. This is very much in the spirit of \cite{KLS} and \cite{FLuN}.

\begin{cor}\label{COR1}
Under the same assumptions as in Theorem \ref{THSt} above, assume moreover that 
\begin{equation*}
\| r_{0, \veps} - R_0\|_{L^2}^2 + \| u_{0, \veps} - U_0\|_{L^2}^2 + \| b_{0, \veps} - B_0\|_{L^2}^2 \tend_{\veps \rightarrow 0^+} 0\,.
\end{equation*}

Then, for all fixed $T>0$, we have strong convergence of the solutions
\begin{equation*}
\big(u_\veps, b_\veps\big)\,\tend_{\veps \rightarrow 0^+}\, \big(U,B\big) \quad \text{ in }\; L^\infty_T(L^2) \cap L^2_T(H^1)\qquad\quad \text{ and }\qquad\quad
r_\veps\,\tend_{\veps \rightarrow 0^+}\,R \quad \text{ in }\; L^\infty_T(L^2)\,.
\end{equation*}
\end{cor}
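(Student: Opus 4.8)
The statement is an immediate consequence of the quantitative relative entropy bound \eqref{EQRelIn} established in Theorem \ref{THSt}. The plan is simply to argue that, under the additional assumption on the initial data, the right-hand side of \eqref{EQRelIn} tends to $0$ as $\veps\to0^+$ uniformly with respect to $t\in[0,T]$, and then to read off the announced convergences directly from the left-hand side.

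First I would record that the constant $C$ appearing in \eqref{EQRelIn} depends only on $T$ and on the fixed quantities $\nu_*$, $\mu_*$, $|\nu|_{\mc C_\s}$, $|\mu|_{\mc C_\s}$, $\|U_0\|_{H^1}$, $\|B_0\|_{H^1}$, $\|R_0\|_{H^{1+\beta}}$ and $M$; in particular it is independent both of $\veps$ and of $t$. Accordingly, set
\[
\eta_\veps\,:=\,C\,\Big(\|\de r_{0,\veps}\|_{L^2}^2+\|\de u_{0,\veps}\|_{L^2}^2+\|\de b_{0,\veps}\|_{L^2}^2+\max\{\veps^2,\s^2(M\veps)\}\Big)\,.
\]
By the additional hypothesis of the corollary, $\|\de r_{0,\veps}\|_{L^2}^2+\|\de u_{0,\veps}\|_{L^2}^2+\|\de b_{0,\veps}\|_{L^2}^2\to0$; moreover, since $\s$ is a modulus of continuity, hence continuous with $\s(0)=0$, one has $\s(M\veps)\to0$, so that $\max\{\veps^2,\s^2(M\veps)\}\to0$ as well. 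Therefore $\eta_\veps\tend_{\veps\to0^+}0$, and $\eta_\veps$ does not depend on $t$.

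Next, \eqref{EQRelIn} yields, for almost every $t\in[0,T]$,
\[
\|\de r_\veps(t)\|_{L^2}^2+\|\de u_\veps(t)\|_{L^2}^2+\|\de b_\veps(t)\|_{L^2}^2+\int_0^t\Big(\|\nabla\de u_\veps\|_{L^2}^2+\|\nabla\times\de b_\veps\|_{L^2}^2\Big){\rm d}\tau\,\leq\,\eta_\veps\,.
\]
Taking the essential supremum over $t\in[0,T]$ in the first three terms gives $\|\de r_\veps\|_{L^\infty_T(L^2)}^2+\|\de u_\veps\|_{L^\infty_T(L^2)}^2+\|\de b_\veps\|_{L^\infty_T(L^2)}^2\leq\eta_\veps\to0$, which is the claimed convergence of $u_\veps$, $b_\veps$ and $r_\veps$ in $L^\infty_T(L^2)$. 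For the $L^2_T(H^1)$ convergence of the velocity and of the magnetic field, I would combine the bound just obtained (which also controls the $L^2_T(L^2)$ norms, $T$ being finite) with the integral term above: the gradient of $\de u_\veps$ is controlled directly, while for $\de b_\veps$, since $\div b_\veps=\div B=0$ on $\Omega=\R^2$ or $\T^2$, one uses the elementary identity $\|\nabla\de b_\veps\|_{L^2}=\|\nabla\times\de b_\veps\|_{L^2}$ (integration by parts, or a Fourier argument) to upgrade the curl control into $\|\nabla\de b_\veps\|_{L^2_T(L^2)}^2\leq\eta_\veps$. Altogether $\|\de u_\veps\|_{L^2_T(H^1)}^2+\|\de b_\veps\|_{L^2_T(H^1)}^2\lesssim\eta_\veps\to0$, which concludes the argument.

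There is no genuine obstacle in this proof: the only two points deserving a word of justification are the continuity at the origin of the modulus of continuity $\s$, which is what makes the $\s^2(M\veps)$ term vanish in the limit, and the divergence-free identity relating $\|\nabla v\|_{L^2}$ and $\|\curl v\|_{L^2}$ in dimension two, which turns the curl control of $\de b_\veps$ into a full $H^1$ control of the magnetic field. Everything else is a direct rewriting of the already established inequality \eqref{EQRelIn}.
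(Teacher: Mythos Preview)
Your argument is correct and matches the paper's approach: the corollary is stated as an immediate consequence of the quantitative estimate \eqref{EQRelIn}, and you have spelled out precisely why the right-hand side vanishes as $\veps\to0^+$ and how the curl control on $\de b_\veps$ upgrades to full $H^1$ control via the divergence-free identity already recorded in the paper.
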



\subsection{Vanishing viscosity and resistivity limit: the ideal case} \label{ss:ideal-res}

In this paragraph, we consider the vanishing viscosity and resistivity limit of system \eqref{MHD1}.
Namely, we now assume that
\begin{equation} \label{hyp:h}
h\,\in\,C^0([0,1])\,,\qquad\quad \mbox{ with }\qquad \forall\ \veps>0 \,, \ h(\veps)>0 \qquad \text{and} \qquad h(\veps)\,\tend_{\veps\ra0^+}\,0\,.
\end{equation}

The main challenge this new problem poses is that, because the elliptic parts of the equations vanish in \eqref{MHD1},
we can no longer rely on \emph{uniform} $L^2_{\text{loc}}(H^1)$ bounds for the velocity and the magnetic fields. We must therefore require additional regularity on the limit points,
which solve the quasi-homogeneous ideal MHD system
\begin{equation}\label{eq:IdMHD}
\begin{cases}
\partial_t R + \D(R U) = 0\\
\partial_t U + \D(U \otimes U - B \otimes B) + \nabla \pi + R U^\perp = 0\\
\partial_t B + \D (U \otimes B - B \otimes U) = 0\\
\D(U) = \D(B) = 0.
\end{cases}
\end{equation}
Notice that this system is exactly system \eqref{i_eq:MHD-I}, supplemented with the choice \eqref{eq:MatrixC} of the matrix $\mf C$ and with $\pi$ defined in \eqref{eq:p-P}.

As we have shown (see Theorem \ref{th:BesovWP} above for the precise statement), system \eqref{eq:IdMHD} is locally well-posed in critical Besov spaces
$B^{s}_{p, r}$, under conditions \eqref{i_eq:Lip} and \eqref{i_eq:p}, so that we can indeed consider the strong solutions we need. For simplicity, we are going to work on energy spaces
$H^s$, corresponding to the choice $r=p=2$.
We point out that Theorem \ref{th:InvConv} below holds on any time interval $[0, T]$
on which solutions of the limit problem exist.

\begin{thm}\label{th:InvConv}
Assume that assumptions \eqref{hyp:nu-mu} and \eqref{hyp:h} hold for the coefficients $\nu$, $\mu$ and $h$ in system \eqref{MHD1}.
Consider a sequence $\big(\rho_{0, \veps}, u_{0, \veps}, b_{0, \veps}\big)_{\veps > 0}$ of initial data satisfying the assumptions fixed
in Subsection \ref{ss:initial}, and let $\big(\rho_\veps,u_\veps,b_\veps\big)_{\veps>0}$ be a corresponding sequence of global in time finite energy weak solutions to system \eqref{MHD1}.
Define $M>0$ as above, \tsl{i.e.}
\begin{equation*} 
M\,:=\,\sup_{\eps>0}\| r_{0, \veps} \|_{L^\infty}\, +\,\sup_{\eps>0} \| u_{0, \veps} \|_{L^2}\, +\,\sup_{\eps>0} \| b_{0, \veps} \|_{L^2}\,.
\end{equation*}
Assume also that the triplet $\big(R_0, U_0, B_0\big)$, defined in \eqref{EQcv}, belongs to $\big(H^{s}(\Omega)\big)^3$, for some $s > 2$,
and let $\big(R,U,B\big) \in C^0\big([0,T^*[\,;H^s(\Omega)\big)^3$ be the corresponding unique strong solution to system \eqref{eq:IdMHD},
where $T^*>0$ denotes the maximal time of existence of that solution\footnote{According to Theorem \ref{th:BesovWP}, one has $T^* = T^*(\|R_0\|_{H^s}, \|U_0\|_{H^s}, \|B_0\|_{H^s})$
with possibly $T^*<+\infty$. The lifespan $T^*$ must also satisfy an explosion criterion, as implied by Proposition \ref{p:ContFinite}.}.
Finally, set
$$
\de r_\eps\,:=\,r_\eps-R\,,\qquad \de u_\eps\,:=\,u_\eps-U\,,\qquad \de b_\eps\,:=\,b_\eps-B
$$
and, with analogous notation, $\de r_{0,\eps}\,:=\,r_{0,\eps}-R_0$, $\de u_{0,\eps}\,:=\,u_{0,\eps}-U_0$ and $\de b_{0,\eps}\,:=\,b_{0,\eps}-B_0$.

Then, for all fixed times $T \in [0, T^*[\,$, the following estimate holds true: for any $\eps>0$ and almost every $t\in[0,T]$,
\begin{multline}\label{EQRelIn2}
\|\de r_\veps(t)\|_{L^2}^2 + \|\de u_\veps(t)\|_{L^2}^2 + \|\de b_\veps(t)\|_{L^2}^2+ h(\veps) \int_0^t\left\{ \left\| \nabla \de u_\veps\right\|_{L^2}^2+
\left\|\nabla\times\de b_\veps \right\|_{L^2}^2 \right\} \, {\rm d}\tau \\
\,\leq\, C\,\bigg\{\|\de r_{0, \veps}\|_{L^2}^2+\|\de u_{0, \veps}\|_{L^2}^2+\|\de b_{0, \veps}\|_{L^2}^2 + \veps^2 + h(\veps) \bigg\}\,,
\end{multline}
where the constant $C > 0$ depends on the lower bounds $\nu_*$ and $\mu_*$, as well as on the norm of the initial datum $\| (R_0, U_0, B_0) \|_{H^s}$, on $M$ and on $T$.
\end{thm}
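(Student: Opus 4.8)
The plan is to prove \eqref{EQRelIn2} by a relative entropy (modulated energy) argument for the primitive system \eqref{MHD1}, comparing the finite energy weak solution $\big(\rho_\veps,u_\veps,b_\veps\big)$ with the strong solution $\big(R,U,B\big)$ of the ideal limit system \eqref{eq:IdMHD}; this is the same scheme as for Theorem \ref{THSt}, the new point being to keep track of the degenerating factor $h(\veps)$. I would introduce the functional
\begin{equation*}
\mc E_\veps(t)\,:=\,\frac12\int_\Omega\Big(\rho_\veps\,|u_\veps-U|^2+|b_\veps-B|^2+|r_\veps-R|^2\Big)\dx\,,
\end{equation*}
which, thanks to the no-vacuum bound \eqref{hyp:no-vacuum}, is comparable (with constants depending only on $\rho_*,\rho^*$) to $\|\de r_\veps(t)\|_{L^2}^2+\|\de u_\veps(t)\|_{L^2}^2+\|\de b_\veps(t)\|_{L^2}^2$, and likewise for $\mc E_\veps(0)$ in terms of the initial data differences. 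Since $s>2$, Theorem \ref{th:BesovWP} (with $H^s=B^s_{2,2}$) ensures $R,U,B\in C^0\big([0,T^*[\,;H^s\big)$ with $\d_tR,\d_tU,\d_tB\in C^0\big([0,T^*[\,;H^{s-1}\big)$, and Proposition \ref{p:pressure} (read off through Theorem \ref{th:symm}) gives $\nabla\pi\in L^\infty_{\rm loc}\big([0,T^*[\,;H^{s-1}\big)$; in particular $U,B,R$ are Lipschitz in space and $C^1$ in time on every $[0,T]$ with $T<T^*$, hence admissible test fields in Definition \ref{d:weak}.

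First I would establish a relative entropy inequality. Combining the energy inequality of Definition \ref{d:weak}(vii) for $\big(\rho_\veps,u_\veps,b_\veps\big)$, the weak momentum and magnetic formulations of Definition \ref{d:weak}(vi) tested against $U$ and $B$, the equations \eqref{eq:IdMHD} (satisfied classically by $R,U,B$), and the transport equations solved by $r_\veps$ and $R$ — and exploiting that in two dimensions $u_\veps^\perp$ is a gradient, so that the singular terms $\veps^{-1}\nabla\Pi_\veps$ and $\veps^{-1}u_\veps^\perp$ disappear against divergence-free test fields — one arrives, after completing the square in the diffusive terms and for a.e. $t\in[0,T^*[\,$, at
\begin{equation*}
\mc E_\veps(t)+h(\veps)\int_0^t\!\!\int_\Omega\Big(\nu(\rho_\veps)\,|\nabla\de u_\veps|^2+\mu(\rho_\veps)\,|\curl\de b_\veps|^2\Big)\dx\,{\rm d}\tau\,\leq\,\mc E_\veps(0)+\int_0^t\mc R_\veps(\tau)\,{\rm d}\tau\,,
\end{equation*}
where the remainder $\mc R_\veps$ collects all the terms that do not close directly. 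This is essentially the general relative entropy inequality of Section \ref{s:entropy}, specialised to the limit system \eqref{eq:IdMHD} (a routine approximation argument being needed to read Definition \ref{d:weak} on the sub-intervals $[0,t]$).

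The heart of the proof is then the estimate of $\mc R_\veps$. Writing everywhere $u_\veps=U+\de u_\veps$, $b_\veps=B+\de b_\veps$, $r_\veps=R+\de r_\veps$ and using \eqref{eq:IdMHD}, all the $O(1)$ contributions cancel (this is exactly why \eqref{eq:IdMHD} is the correct limit system), and the survivors fall into three families. The convective, magnetic-coupling, Coriolis and density terms produce only quantities quadratic in $\de$ — of the form $\int\rho_\veps(\de u_\veps\cdot\nabla)U\cdot\de u_\veps$, $\int\de r_\veps\,\de u_\veps^\perp\cdot U$, $\int\de r_\veps\,\de u_\veps\cdot\nabla R$, plus the analogous $\de b_\veps$ and mixed $\de u_\veps$--$\de b_\veps$ terms — which, since $U,B,R$ are Lipschitz (here $s>2$ is used), are $\lesssim\big(1+\|\nabla U\|_{L^\infty}+\|\nabla B\|_{L^\infty}+\|\nabla R\|_{L^\infty}\big)\mc E_\veps$, ready for Gr\"onwall. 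Because \eqref{eq:IdMHD} has no diffusion, the viscous terms of \eqref{MHD1} applied to $U$ and $B$ survive in $\mc R_\veps$ as cross-terms $-h(\veps)\int\nu(\rho_\veps)\nabla\de u_\veps:\nabla U$ and $-h(\veps)\int\mu(\rho_\veps)\curl\de b_\veps\,\curl B$; by Young's inequality one absorbs $\tfrac{\nu_*}2h(\veps)\|\nabla\de u_\veps\|_{L^2}^2$ and $\tfrac{\mu_*}2h(\veps)\|\curl\de b_\veps\|_{L^2}^2$ back into the left-hand side, and is left with a term $\lesssim h(\veps)\big(\|\nabla U\|_{L^2}^2+\|\curl B\|_{L^2}^2\big)$ whose time integral is $O\big(h(\veps)\big)$ since $U,B\in L^\infty_T(H^1)$ — this is where the term $h(\veps)$ on the right-hand side of \eqref{EQRelIn2} comes from. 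Finally, the terms carrying a factor $\veps$ from $\rho_\veps-1=\veps r_\veps$ inside the inertial, pressure and Coriolis terms: after the $O(1)$ parts have cancelled — in particular the pressure contributions of the modulated kinetic energy combine into a term of the form $\veps\int r_\veps\,\de u_\veps\cdot\nabla\pi$ — each such term still carries one of $\de r_\veps,\de u_\veps,\de b_\veps$, so Young's inequality (against $\mc E_\veps$ or against the coercive dissipation) converts it into $C\veps^2+\eta\,\mc E_\veps+(\text{absorbable})$; here one uses $\|r_\veps(t)\|_{L^\infty}\leq\|r_{0,\veps}\|_{L^\infty}\leq M$ and $\nabla\pi\in L^\infty_T(H^{s-1})\hookrightarrow L^\infty_T(L^2)$.

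Collecting the three families yields a differential inequality $\mc E_\veps(t)+h(\veps)(\text{dissipation})\leq C\big(\mc E_\veps(0)+\veps^2+h(\veps)\big)+\int_0^t\varphi(\tau)\,\mc E_\veps(\tau)\,{\rm d}\tau$, where $\varphi(\tau)=C\big(1+\|\nabla U\|_{L^\infty}+\|\nabla B\|_{L^\infty}+\|\nabla R\|_{L^\infty}\big)$ belongs to $L^1([0,T])$ for every $T<T^*$ since $(R,U,B)\in C^0_T(H^s)$. Gr\"onwall's lemma gives $\mc E_\veps(t)+h(\veps)(\text{dissipation})\leq C(T)\big(\mc E_\veps(0)+\veps^2+h(\veps)\big)$ on $[0,T]$, and translating $\mc E_\veps$ back into $L^2$ norms via \eqref{hyp:no-vacuum} produces \eqref{EQRelIn2}. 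The main obstacle is the viscous family above: in Theorem \ref{THSt} the coercive dissipation $\int\nu|\nabla\de u_\veps|^2$ appears without the prefactor $h(\veps)$ and leaves ample room to absorb the cross-terms coming from the diffusion acting on $U$ and $B$, whereas here $h(\veps)\to0^+$ and the best one can extract from those cross-terms is an $O(h(\veps))$ error, no better — which is precisely why \eqref{EQRelIn2} carries the term $h(\veps)$ and why it cannot be improved. A secondary care, absent from Theorem \ref{THSt}, is that everything must be confined to the maximal interval $[0,T^*[$ of the (only local) strong solution, and that $s>2$ is needed so that $U,B,R$ are Lipschitz.
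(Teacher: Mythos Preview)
Your proposal is correct and follows essentially the same route as the paper: the relative entropy inequality of Section \ref{s:entropy}, the same cancellation via \eqref{eq:IdMHD} leading to the remainder terms, the $L^\infty$ bounds on $\nabla U,\nabla B,\nabla R$ (available because $s>2$) for the quadratic-in-$\delta$ terms, Young's inequality on the diffusive cross-terms producing the $O(h(\veps))$ error, and the direct estimate of the $\veps r_\veps$ terms giving $O(\veps^2)$, all closed by Gr\"onwall. The only cosmetic difference is that the paper keeps $\partial_tU$ intact in the $\veps$-family (bounding it in $L^\infty$ via $H^{s-1}\hookrightarrow L^\infty$) rather than substituting the equation for $U$ and producing an explicit $\veps\int r_\veps\,\de u_\veps\cdot\nabla\pi$ term as you do; both variants work.
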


Of course, a statement in the spirit of Corollary \ref{COR1} can be deduced also in this case.

\section{A relative entropy inequality for the primitive system} \label{s:entropy}

In this section we show that any finite energy weak solution to the non-homogeneous viscous and resistive MHD system \eqref{eq:MHD} satisfies a relative entropy inequality.

For simplicity, we assume the space domain $\Omega$ to be either the whole space $\R^2$ or the torus
$\T^2$, exactly as in Section \ref{s:res-as} above. However, more general domains may be allowed at this stage.

\subsection{Preliminaries}\label{ss:assumptions}
Let us collect here our main working assumptions, which will be assumed to hold true throughout the rest of this section.
They simply correspond to taking $\veps=1$ in Subsections \ref{ss:model} and \ref{ss:initial} above; however, for the reader's convenience, we will recall them here.

To begin with, the primitive system (recall \eqref{MHD1} and take $\veps=1$) is now the following non-homogeneous incompressible MHD system with Coriolis force:
\begin{equation}\label{eq:MHD}
\begin{cases}
\d_t\rho\,+\,\div\big(\rho\,u\big)\,=\,0 \\[1ex]
\partial_t\big(\rho u\big)\,+\,\D\big(\rho\, u \otimes u\big)\,+\,\nabla \Pi\,+\, \rho\, u^\perp\,=\,\D\big(\nu(\rho)\, \nabla u\big)\,+\,
\D\big(b \otimes b\big)\,-\,\dfrac{1}{2}\nabla |b|^2 \\[1ex]
\partial_t b\, +\, \D\big(u \otimes b\big) \,-\,\div\big(b \otimes u\big)\, =\, \nabla^\perp\big(\mu (\rho)\, \nabla\times b\big) \\[1ex]
\div u\,=\,\div b\, =\, 0\,.
\end{cases}
\end{equation}
The viscosity and resistivity coefficients, $\nu$ and $\mu$ respectively,  satisfy assumption \eqref{hyp:nu-mu}. 

System \eqref{eq:MHD} is supplemented with the initial datum $\big(\rho\,,\,u\,,\,b\big)_{|t=0}\,=\,\big(\rho_0\,,\,u_0\,,\,b_0\big)$ satisfying the following assumptions
(see also \cite{GB}, \cite{L}):
\begin{enumerate}[(a)]
 \item for the initial density, we require
\begin{equation*}
\rho_0\,\geq\,\rho_*\,>\,0\qquad\qquad \mbox{ and }\qquad\qquad \rho_0\,-\,1\,:=\,r_0\,\in\, L^2(\Omega)\cap L^\infty(\Omega)\,;
\end{equation*}
\item for the initial velocity and magnetic fields, we require
$$
u_0\,\in\, L^2(\Omega)\,,\quad b_{0}\,\in\, L^2(\Omega)\qquad\qquad\mbox{ and }\qquad\qquad \div u_0\,=\,\div b_0\,=\,0 \,.
$$
\end{enumerate}

Notice that the $L^2$ condition on $r_0$ is not really needed for the existence of weak solutions, but we assume it for later use in Subsection \ref{ss:result}.
Similarly, the assumption $\rho_0\geq\rho_*>0$ is enough in view of our study of Section \ref{s:rotation} (recall hypothesis \eqref{hyp:no-vacuum} above),
although it could be slightly relaxed (in the spirit of conditions (2.8) to (2.10) of \cite{L}, see also \cite{Cobb-F}) for formulating the relative entropy inequality.
Remark however that the weak solutions theory for system \eqref{eq:MHD} requires absence of vacuum (see \tsl{e.g.} \cite{GB}, \cite{DLe}).

For a given initial datum $(\rho_0, u_0, b_0)$ verifying the hypotheses above, we consider a global in time finite energy weak solution $(\rho,u,b)$ of \eqref{eq:MHD}.
Here, the definition of finite energy weak solution is the same as in Definition \ref{d:weak} above, with the choice $\veps=1$. In particular, we point out the following facts:
\begin{enumerate}[(i)]
 \item the weak formulation of the momentum equation becomes: for any $\psi \in \mathcal{D}\big([0, T]\times \Omega ; \mathbb{R}^2\big)$ such that $\D(\psi) = 0$,
one has
\begin{multline}\label{Momentum}
\int_0^T \int_\Omega \left\{ \rho u \cdot \partial_t \psi + \big(\rho u \otimes u - b \otimes b\big) : \nabla \psi 
-\rho u^\perp \cdot \psi - \nu(\rho) \nabla u :\nabla \psi  \right\} \text{d}x \text{d}t \\
+ \int_\Omega \rho_0 u_0\cdot \psi(0,\cdot) \text{d}x = \int_\Omega \rho(T) u(T) \cdot \psi(T,\cdot) \dx \,;
\end{multline}
\item the weak formulation of the magnetic field equation now reads: for all $\z \in \mathcal{D}\big([0, T] \times \Omega;\R^2\big)$ one has
\begin{multline} \label{eq:magn}
\int_0^T \int_\Omega \bigg\{  b \cdot \partial_t \z + \big(u \otimes b - b \otimes u\big):\nabla \z + \mu(\rho) \big(\nabla\times b\big)\big(\nabla\times \z\big)\bigg\} \dx \text{d}t  \\
+\int_\Omega b_0 \cdot \z(0,\cdot) \dx = \int_\Omega b(T) \cdot \z(T,\cdot) \dx \,;
\end{multline}
\item the energy inequality becomes:
for almost every $t \in [0, T]\,$, the following energy balance holds true:
\begin{multline} \label{EN}
\int_\Omega \Big( \rho(t) |u(t)|^2 + |b(t)|^2 \Big) \dx +  \int_0^t \int_\Omega \Big( \nu(\rho) |\nabla u|^2 +  \mu(\rho) |\nabla\times b|^2 \Big) \dx \text{d}\tau\, \\
\leq\, \int_\Omega \Big( \rho_{0}|u_{0}|^2 + |b_{0}|^2 \Big) \dx\,.
\end{multline}
\end{enumerate}

Existence of global in time finite energy weak solutions (in the previous sense) to system \eqref{eq:MHD} has been shown by Gerbeau and Le Bris in \cite{GB}
in a bounded domain of $\mathbb{R}^3$; see also \cite{DLe} for related results and additional references.
The proof of \cite{GB} can be extended to the cases $\mathbb{R}^2$ or $\mathbb{T}^2$ (which are relevant for our study) with standard modifications.

We point out that, in \cite{GB}, the authors resort to P.-L. Lion's theory \cite{L} for non-homogeneous fluids with density-dependent viscosities but with no magnetic field.
In that case, the initial density is even allowed to vanish, under some suitable non-degeneracy condition; on the contrary, the result of \cite{GB} holds
only for fluids with non-vanishing initial densities, namely $\rho_0>0$ (see Remark 3.4 of \cite{GB} for more comments about this issue). In view of assumption \eqref{hyp:no-vacuum},
the result of \cite{GB} is all what we need for the present study.

Before going on, some remarks are in order.
\begin{rmk}
Observe that, owing to the divergence-free condition on the magnetic field and Plancharel's theorem, when the space dimension is $d=2$ one has
\begin{equation*}
\big\| \nabla b \big\|_{L^2} = \big\| -i \xi \what{b}(\xi) \big\|_{L^2} = \big\| -i \xi^\perp \cdot \what{b}(\xi) \big\|_{L^2} = \big\| \nabla \times b \big\|_{L^2}\,.
\end{equation*}
Therefore, inequality \eqref{EN} above yields an $L^2$ bound for the full gradient of $b$.
\end{rmk}

\begin{rmk} \label{r:weak}
When vacuum is permitted, or even when $\rho_0>0$ (without a uniform lower bound), the (weak) time continuity of the velocity field is no longer true in general. On the other hand,
one can show that $\mbb P (\rho u)$ is (weakly) continuous in time, where $\mbb P$ is the Leray projector onto the space of divergence-free vector fields (see Theorem 2.2 of \cite{L}
and Remark 3.1 of \cite{GB} in this respect). 
\end{rmk}

To conclude this part, we point out that, as the fluid density $\rho$ is simply transported by the divergence-free velocity field $u$, for all $t\geq0$ we get
\begin{equation} \label{est:rho_L^p}
\forall\;p\,\in\,[2,+\infty]\,,\qquad
 \left\|\rho(t)-1\right\|_{L^p}\,=\,\left\|\rho_0-1\right\|_{L^p}\,,\qquad\qquad \left\|\rho(t)\right\|_{L^\infty}\,=\,\left\|\rho_0\right\|_{L^\infty}\,.
\end{equation}

\subsection{Stating the relative entropy inequality} \label{ss:result}
Following what done in Subsection \ref{ss:initial}, we set 
\begin{equation} \label{eq:rho-r}
r\,:=\,\rho\,-\,1\qquad\qquad\mbox{ and }\qquad\qquad r_0\,:=\,\rho_0\,-\,1\,,
\end{equation}
and we notice that, owing to the first equation in \eqref{eq:MHD} and the condition $\div(u)=0$, one has
\begin{equation} \label{eq:r}
\d_tr\,+\,\div\big(r\,u\big)\,=\,0\,,\qquad\qquad r_{|t=0}\,=\,r_0\,.
\end{equation}
Since this relation is completely analogous to the mass conservation equation, 
throughout all this section we will equivalently speak of solutions $(\rho,u,b)$ and $(r,u,b)$ to the MHD system \eqref{eq:MHD},
implying that $r$ and $\rho$ are linked through \eqref{eq:rho-r}.

Next, we define the \emph{relative entropy} of a solution $(r,u,b)$ of the non-homogeneous system \eqref{eq:MHD} with respect to a triplet $(R,U,B)$ of (say) smooth, compactly supported functions in $\R_+\times\Omega$, to be the quantity 
\begin{equation} \label{def:entropy}
\mc E\Big([r,u,b]\,\Big|\,[R,U,B]\Big)\,:=\,\frac{1}{2} \int_\Omega \bigg\{\rho\, \big|u-U\big|^2\, +\, \big|b-B\big|^2\, +\, \big|r-R\big|^2 \bigg\}\,\dx\,.
\end{equation}
We can then formulate the following \textit{relative entropy inequality}: for almost every $T>0$,
\begin{align}
&\mc E\Big([r,u,b]\,\Big|\,[R,U,B]\Big)(T)\,+\,\int^T_0\!\!\!\int_\Omega\left(\nu(\rho)\,\big|\nabla(u-U)\big|^2\,+\,\mu(\rho)\,\big|\nabla\times(b-B)\big|^2\right)\dx\,\dt \label{est:rel-entropy} \\
&\qquad\qquad\qquad\qquad\qquad\leq\,\mc E\Big([r_0,u_0,b_0]\,\Big|\,[R(0),U(0),B(0)]\Big)\,+\,\int^T_0\mc R\big(r,u,b;R,U,B\big)\,\dt\,, \nonumber
\end{align}
where we have defined
\begin{align}
\mc R\big(r,u,b;R,U,B\big)\,:=&\,-\int_\Omega\rho\left(\d_tU+ (u\cdot\nabla) U+U^\perp\right)\cdot\de u\dx-\int_\Omega\big(\d_tB+ (u\cdot\nabla) B\big)\cdot\de b\dx \label{def:R}  \\
&\;-\int_\Omega\big(\d_tR+u\cdot\nabla R\big)\cdot\de r\dx+\int_\Omega\Big(\big(  b \cdot\nabla \big) U \cdot\de b+\big( b\cdot\nabla \big) B\cdot\de u\Big)\dx \nonumber \\
&\;-\int_\Omega\nu(\rho)\,\nabla U:\nabla \de u\dx-\int_\Omega\mu(\rho)\,\big(\nabla\times B\big)\,\big(\nabla\times \de b\big)\dx\,. \nonumber
\end{align}

The main goal of this section is to prove that any global in time finite energy weak solution to \eqref{eq:MHD} satisfies the previous relative entropy inequality,
as established in the next statement. Its proof is postponed to the following subsection.

\begin{thm} \label{t:rel-en}
Let $(r_0, u_0, b_0) \in (L^\infty \cap L^2)(\Omega) \times L^2(\Omega) \times L^2(\Omega)$ be an initial datum for problem \eqref{eq:MHD}, and let $(r, u, b)$ be a global in time
finite energy solution related to that initial datum.

Then, for any triplet $(R, U, B)$ of functions enjoying the following regularity properties, namely
\begin{enumerate}[(1)]
\item $R \in W^{1,1}_{\rm loc}\big(\R_+;L^2(\Omega)\big)$, with $\nabla R\in L^2_{\rm loc}\big(\R_+;L^q(\Omega)\big)$ for some $q>2$,
\item $U, B\,\in W^{1, 1}_{\rm loc}\big(\R_+;L^2(\Omega)\big)$, with $\nabla U, \nabla B\, \in  L^2_{\rm loc}\big(\R_+;(L^2 \cap L^{q})(\Omega)\big)$ for some $q > 2$,
\item $\div U\,=\,\div B\,=\,0$ almost everywhere in $\R_+\times\Omega$,
\end{enumerate}
the relative entropy inequality \eqref{est:rel-entropy} holds for almost every $T>0$.
\end{thm}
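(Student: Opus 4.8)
The plan is to derive the relative entropy inequality \eqref{est:rel-entropy} by combining the energy inequality \eqref{EN} for the weak solution $(r,u,b)$ with the weak formulations \eqref{Momentum} and \eqref{eq:magn} of the momentum and magnetic field equations, using the test functions $U$, $B$ and $R$ themselves (after a density/truncation argument to make them admissible test functions). Expanding the square in the definition \eqref{def:entropy} of $\mc E$, we write
\[
\frac12\int_\Omega\Big(\rho|u-U|^2+|b-B|^2+|r-R|^2\Big)\dx
=\frac12\int_\Omega\Big(\rho|u|^2+|b|^2+|r|^2\Big)\dx
-\int_\Omega\Big(\rho u\cdot U+b\cdot B+rR\Big)\dx
+\frac12\int_\Omega\Big(\rho|U|^2+|b|^2\!\!\!\quad\text{(corrected below)}\Big),
\]
so the first group is controlled by the energy inequality, and we must track the evolution of the cross terms $\int\rho u\cdot U$, $\int b\cdot B$, $\int rR$ and of the "potential" terms $\frac12\int\rho|U|^2$, $\frac12\int|B|^2$, $\frac12\int|R|^2$. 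The cross terms are handled by plugging $\psi=U$ into \eqref{Momentum}, $\z=B$ into \eqref{eq:magn}, and using \eqref{eq:r} tested against $R$; the potential terms require an additional identity, obtained by testing the mass equation \eqref{eq:r} (in the form $\d_t\rho+\div(\rho u)=0$) against $\frac12|U|^2$ and against $\frac12$, together with $\d_t R+\div(rU)$-type manipulations. Careful bookkeeping then reorganizes all the resulting integrals into the dissipation term $\int\int(\nu(\rho)|\nabla(u-U)|^2+\mu(\rho)|\nabla\times(b-B)|^2)$ plus the remainder $\mc R$ defined in \eqref{def:R}.

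The key steps, in order, would be: (1) justify that $U$, $B$, $R$ (suitably truncated in time and mollified, or via a standard approximation argument exploiting the regularity hypotheses (1)--(3)) are legitimate test functions in \eqref{Momentum}, \eqref{eq:magn} and in the weak formulation of \eqref{eq:r}, and pass to the limit in the approximation — here the integrability $\nabla U,\nabla B\in L^2_{\rm loc}(L^2\cap L^q)$ with $q>2$ and $\nabla R\in L^2_{\rm loc}(L^q)$ is exactly what makes all the products (e.g. $\rho u\otimes u:\nabla U$, $b\otimes b:\nabla U$, $rU\cdot\nabla R$) integrable, using $u,b\in L^\infty_t L^2\cap L^2_t H^1$ and $\rho\in L^\infty$; (2) write down the six scalar identities (energy balance; $\frac{d}{dt}\int\rho u\cdot U$; $\frac{d}{dt}\int b\cdot B$; $\frac{d}{dt}\int rR$; $\frac{d}{dt}\frac12\int\rho|U|^2$; $\frac{d}{dt}\frac12\int|B|^2$ and $\frac{d}{dt}\frac12\int|R|^2$), each valid in the integrated-in-time weak sense; (3) add them with the correct signs and simplify, using $\div u=\div U=\div B=0$ to cancel pressure terms and to turn transport terms into the convective form appearing in $\mc R$, and completing the square on the viscous/resistive terms: $\int\nu(\rho)\nabla u:\nabla u-2\int\nu(\rho)\nabla u:\nabla U=\int\nu(\rho)|\nabla(u-U)|^2-\int\nu(\rho)|\nabla U|^2$, and symmetrically for $\mu$; (4) identify the leftover terms with $-\int\rho(\d_tU+(u\cdot\nabla)U+U^\perp)\cdot\de u$ etc., and recognize the magnetic coupling terms $\int((b\cdot\nabla)U\cdot\de b+(b\cdot\nabla)B\cdot\de u)$ coming from the $b\otimes b$ and $u\otimes b-b\otimes u$ structures.

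The main obstacle I expect is step (1): the weak solution has only the regularity of Definition \ref{d:weak}, and in particular (cf. Remark \ref{r:weak}) one must be careful about which quantities are weakly time-continuous. Since here $\rho_0\ge\rho_*>0$, the velocity itself is in $C_w([0,T];L^2)$, which removes the worst difficulty, but one still has to mollify in time to differentiate products like $\int\rho u\cdot U$ and then pass to the limit, controlling the commutator between mollification and the (only $L^\infty$, merely transported) density — this is a Di Perna–Lions type renormalization argument, and the $L^2\cap L^\infty$ control on $r=\rho-1$ together with $u\in L^2_t H^1$ is what makes it work. A secondary technical point is ensuring the test functions $U,B$ are exactly divergence-free (hypothesis (3)) so that no pressure contribution from \eqref{Momentum} survives, and that the time-truncation is done so that the boundary terms at $t=0$ and $t=T$ reproduce $\mc E([r_0,u_0,b_0]\,|\,[R(0),U(0),B(0)])$ and $\mc E([r,u,b]\,|\,[R,U,B])(T)$ respectively; for the latter one uses that $T$ is chosen outside a null set where the energy inequality and the weak-continuity representatives all hold. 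Once these justifications are in place, the algebraic rearrangement is routine and yields \eqref{est:rel-entropy} with the stated $\mc R$.
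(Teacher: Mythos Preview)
Your proposal is correct and follows essentially the same route as the paper: one first establishes \eqref{est:rel-entropy} for smooth compactly supported $(R,U,B)$ by combining the energy inequality \eqref{EN}, the weak formulations \eqref{Momentum} and \eqref{eq:magn} tested against $U$ and $B$, the transport equation \eqref{eq:r} tested against $R$, and the identities obtained by testing the mass equation against $\tfrac12|U|^2$, $\tfrac12|B|^2$, $\tfrac12|R|^2$, then rearranges the resulting terms exactly as you describe; the extension to $(R,U,B)$ with the stated regularity is done by density, the integrability checks being precisely those you list. One small slip: in your viscous ``completion of the square'' the cross term $\int\nu(\rho)\nabla u:\nabla U$ appears only once (not twice), so the leftover is $-\int\nu(\rho)\nabla U:\nabla\de u$ rather than $-\int\nu(\rho)|\nabla U|^2$, which is exactly the term appearing in \eqref{def:R}.
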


Before going on, let us list a series of possible extensions of the previous result.
First of all, Theorem \ref{t:rel-en} can be easily formulated also in the higher dimensional case, provided the Coriolis term vanishes or is changed in an appropriate
(physically relevant) way. In addition, the technique of the proof can be employed to handle more complex geometries of the space domain,
encoding different boundary conditions (\tsl{e.g.} no-slip, or complete slip boundary conditions). 
Finally, as it will appear clearly from the proof, the regularity of the triplet $(R,U,B)$ can also be somehow relaxed, and different integrability hypotheses may be imposed.

However, we refrain from treating such situations in full generality, and we limit ourselves to state and prove the result in the case
which is of interest for our applications.

\subsection{Proof of the relative entropy inequality} \label{ss:proof}

In order to prove Theorem \ref{t:rel-en}, we proceed as in \cite{FJN}, where the authors prove similar inequalities for compressible Navier-Stokes equations.
For simplicity, let us consider for a while a triplet $(R, U, B)$ of smooth functions such that:
\begin{enumerate}[(i)]
 \item $R\,\in\,\mc D\big(\R_+\times\Omega\big)$;
 \item $U$ and $B$ belong to $\mc D\big(\R_+\times\Omega;\R^2\big)$ and are such that $\;\div U\,=\,\div B\,=\,0$.
\end{enumerate}
Let $T>0$ be such that the support of $R$, $U$ and $B$ is included in $[0,T]\times\Omega$.


First of all, using $\psi = U$ as a test function in the weak form \eqref{Momentum} of the momentum equation, after noting that $u^\perp\cdot U\,=\,-u\cdot U^\perp$, we find
\begin{align}
&\int_\Omega \rho(T) u(T) \cdot U(T) \dx= \int_\Omega \rho_{0} u_{0} \cdot U(0) \dx + \int_0^T\!\!\!\int_\Omega \rho u \cdot \partial_tU \dx \dt  \label{eq:uU} \\   
&\quad+\int_0^T\!\!\! \int_\Omega \big( \rho u \otimes u - b \otimes b\big) : \nabla U \dx \dt+\int_0^T\!\!\! \int_\Omega \rho u \cdot U^\perp \dx \dt - \int_0^T\!\!\! \int_\Omega \nu(\rho) \nabla u : \nabla U \dx \dt\,. \nonumber
\end{align}
Next, testing the mass equation in \eqref{eq:MHD} against $|U|^2/2$ yields
\begin{align} 
&\frac{1}{2}\! \int_\Omega \rho(T) |U(T)|^2 \dx = \frac{1}{2}\! \int_\Omega \rho_{0} |U(0)|^2 \dx + \int_0^T\!\!\! \int_\Omega \rho \, U \cdot \partial_tU \dx \dt +
\frac{1}{2}\! \int_0^T\!\!\! \int_\Omega \rho \, u\cdot \nabla |U|^2 \dx \dt \label{eq:UU} \\
&\qquad\qquad\qquad\qquad= \frac{1}{2}\! \int_\Omega \rho_{0} |U(0)|^2 \dx + \int_0^T\!\!\! \int_\Omega \rho \, U \cdot \partial_tU \dx \dt +
\int_0^T\!\!\! \int_\Omega \rho \, u\otimes U:\nabla U \dx \dt\,. \nonumber
\end{align}
Recall that the energy inequality \eqref{EN} reads
\begin{multline}\label{eq:ub}
\frac{1}{2} \int_\Omega \bigg\{ \rho (T) |u (T)|^2 + |b(T)|^2 \bigg\} \dx \leq \frac{1}{2} \int_\Omega \bigg\{ \rho_{0} |u_{0}|^2 + |b_{0}|^2  \bigg\} \dx \\
- \int_0^T \int_\Omega \bigg\{ \nu(\rho) |\nabla u|^2 + \mu(\rho) |\nabla\times b|^2 \bigg\} \dx \dt\,.
\end{multline}

Now, let us deal with the magnetic field. We start by using $\z = B$ as a test function in the magnetic field equation \eqref{eq:magn}: we get
\begin{multline}\label{eq:bB}
\int_\Omega b(T) \cdot B(T) \dx  =\int_\Omega b_{0} \cdot B(0) \dx + \int_0^T \int_\Omega b \cdot \partial_t B \dx \dt \\
 +\int_0^T \int_\Omega\big\{ u \otimes b - b \otimes u \big\} :\nabla B \dx \dt - \int_0^T\int_\Omega \mu(\rho) \big(\nabla\times b\big)\big(\nabla\times B\big) \dt \dt\,.
\end{multline}
By analogy with what we have done for the velocity field above, we now use $\phi = |B|^2/2$ as a test function in the (trivial) transport equation $\partial_t1 + \div(1\,u) = 0$:
we find
\begin{align} 
\frac{1}{2} \int_\Omega |B(T)|^2 \dx &=  \frac{1}{2} \int_\Omega |B(0)|^2 \dx + \int_0^T \int_\Omega B \cdot \partial_t B \dx + \frac{1}{2} \int_0^T \int_\Omega u \cdot \nabla |B|^2 \dx \dt \label{eq:BB} \\
&=  \frac{1}{2} \int_\Omega |B(0)|^2 \dx + \int_0^T \int_\Omega B \cdot \partial_t B \dx + \int_0^T \int_\Omega u\otimes B:\nabla B \dx \dt\,. \nonumber
\end{align}

Finally, we take care of the density oscillation functions $r$ and $R$. Testing equation \eqref{eq:r} against the smooth $R$ gives
\begin{equation}\label{eq:rR}
\int_\Omega r(T) R(T) \dx = \int_\Omega r_0 R(0) \dx+\int_0^T \int_\Omega r \partial_tR \dx \dt  + \int_0^T \int_\Omega r u\cdot \nabla R \dx \dt\,.
\end{equation}
Using again \eqref{eq:r} and the fact that $\div u=0$, we deduce that the $L^2$ norm of $r$ must be preserved in time (this is the same property as \eqref{est:rho_L^p} above). In other words,
\begin{equation}\label{eq:rr}
\int_\Omega |r(T)|^2 \dx = \int_\Omega |r_{0}|^2 \dx\,.
\end{equation}
On the other hand, repeating the computations which led to \eqref{eq:BB}, we obtain
\begin{equation} \label{eq:RR}
\frac{1}{2} \int_\Omega |R(T)|^2 \dx = \frac{1}{2} \int |R(0)|^2 \dx + \int_0^T \int_\Omega R \partial_t R \dx \dt + \int^T_0\int_\Omega Ru\cdot\nabla R\dx \dt\,.
\end{equation}

At this point, for notational convenience let us define
$$
\de r\,:=\,r-R\,,\qquad \de u\,:=\,u-U\,,\qquad \de b\,:=\,b-B\,.
$$
Putting relations \eqref{eq:uU}, \eqref{eq:UU}, \eqref{eq:ub}, \eqref{eq:bB}, \eqref{eq:BB}, \eqref{eq:rR}, \eqref{eq:rr} and \eqref{eq:RR} all together, we eventually find
\begin{align}
&\frac{1}{2} \int_\Omega \bigg\{\rho(T)\, |\de u(T)|^2\, +\, |\de b(T)|^2\, +\, |\de r(T)|^2 \bigg\}\,\leq\,
\frac{1}{2}\int_\Omega \bigg\{ \rho_{0}\,|\de u_{0}  |^2\,+\,|\de b_{0}|^2\, +\, |\de r_{0}|^2\bigg\} \label{EQLong} \\
&\qquad\quad - \int_0^T\!\!\!\int_\Omega \nu(\rho)\,\nabla u:\nabla \de u\,-\, \int_0^T\!\!\!\int_\Omega \mu(\rho)\,\big(\nabla\times b\big)\,\big(\nabla\times\de b\big)\,-\,
\int_0^T\!\!\!\int_\Omega \rho\, u \cdot U^\perp \nonumber\\
&\qquad\quad-\int_0^T\!\!\!\int_\Omega \bigg\{\rho\,\de u \cdot \partial_tU\, +\, \de b \cdot \partial_t B\, +\, \de r\, \partial_tR \bigg\}\,-\,
\int_0^T\!\!\!\int_\Omega\bigg\{\rho\, u\otimes\de u:\nabla U\,+\,\de r\, u\cdot\nabla R\bigg\} \nonumber\\
&\qquad\quad+ \int_0^T \int_\Omega \bigg\{-\,u\otimes\de b:\nabla B\, +\, b \otimes b : \nabla U\, +\,b \otimes u : \nabla B\bigg\}\,. \nonumber
\end{align}
Next, we remark that we can write\footnote{Notice that we have set $f\otimes g:\nabla h=\sum_{j,k}f_jg_k\d_jh_k$; this corresponds to
the agreement that $[\nabla h]_{ij} = \partial_i h_j$ is the transpose matrix of the differential $Dh$ of $h$.} 
$$
\rho\,u\otimes\de u:\nabla U\,=\,\rho\,\big(u\cdot\nabla \big) U \cdot \de u\qquad\mbox{ and }\qquad u\otimes\de b:\nabla B\,=\,\big(u\cdot\nabla \big) B \cdot\de b\,,
$$
and that, by orthogonality, we have $u\cdot U^\perp\,=\,\de u\cdot U^\perp$.
Therefore, the right-hand side of \eqref{EQLong} can be recasted as
\begin{align*}
&\mc E\Big([r_0,u_0,b_0]\,\Big|\,[R(0),U(0),B(0)]\Big) \\
&-\int^T_0\!\!\!\int_\Omega\rho\left(\d_tU+ (u\cdot\nabla) U+U^\perp\right)\cdot\de u-\int^T_0\!\!\!\int_\Omega\big(\d_tB+ (u\cdot\nabla) B\big)\cdot\de b
-\int^T_0\!\!\!\int_\Omega\big(\d_tR+u\cdot\nabla R\big)\cdot\de r \\
&-\int^T_0\!\!\!\int_\Omega\nu(\rho)\,\nabla u:\nabla \de u-\int^T_0\!\!\!\int_\Omega\mu(\rho)\,\big(\nabla\times b\big)\,\big(\nabla\times \de b\big)+
\int^T_0\!\!\!\int_\Omega\bigg\{b \otimes b : \nabla U\, +\,b \otimes u : \nabla B\bigg\}\,.
\end{align*}
Let us focus on the last two terms for a while. Simple manipulations show that
\begin{align*}
b\otimes b:\nabla U\,&=\,\big(b\cdot\nabla \big) U\cdot b\,=\,\big(b\cdot\nabla\big) U \cdot \de b\,+\,\big(b\cdot\nabla \big) U\cdot B \\
b\otimes u:\nabla B\,&=\,\big(b\cdot\nabla \big) B\cdot u\,=\,\big(b\cdot\nabla \big) B\cdot\de u\,+\,\big(b\cdot\nabla \big) B \cdot U\,.
\end{align*}
Observe that $\big(b\cdot\nabla \big) U\cdot B\,+\,\big(b\cdot\nabla \big) B\cdot U\,=\,b\cdot\nabla\big( B\cdot U\big)$. Therefore, owing to the the divergence-free condition on $b$,
the previous relations imply that
$$
\int^T_0\!\!\!\int_\Omega\bigg\{b \otimes b : \nabla U\, +\,b \otimes u : \nabla B\bigg\}\,=\,\int^T_0\!\!\!\int_\Omega\bigg\{\big(b\cdot\nabla\big) U\cdot\de b\,+\,
\big(b\cdot\nabla\big) B\cdot\de u\bigg\}\,.
$$
This completes the proof of \eqref{est:rel-entropy} for smooth compactly supported $(R,U,B)$.

\medbreak
In order to get the result for triplets $(R,U,B)$ having the regularity stated in Theorem \ref{t:rel-en}, we argue by density.

Firstly, we observe that, for the left-hand side of \eqref{est:rel-entropy} to be well-defined, it is enough to have
\begin{equation} \label{cond:1}
|U|^2\,+\,|B|^2\,+\,R^2\,\in\,L^\infty_{T}(L^1)\qquad\mbox{ and }\qquad \nabla U\,,\ \nabla B\;\in\, L^2_T(L^2)\,.
\end{equation}
Next, we consider each term appearing in the definition \eqref{def:R} of $\mc R$. In view of a possible application of Gr\"onwall lemma, in the first three terms it is natural to put
$\sqrt{\rho}\, \de u$, $\de B$ and $\de r$ in $L^\infty_T(L^2)$, so that, since $\rho\in L^\infty(\R_+\times\Omega)$, one needs
$$
\d_tU+(u\cdot\nabla) U+U^\perp\,,\quad \d_t B+(u\cdot\nabla) B\,,\quad \d_tR+(u\cdot\nabla) R\qquad\mbox{ to belong to }\qquad L^1_T(L^2)\,.
$$
Owing to the regularity of $u\in L^2_T(H^1)$ and Sobolev embeddings, it is enough to require, besides the above conditions \eqref{cond:1}, also the conditions
\begin{equation} \label{cond:2}
\d_tU\,, \ \d_tB\,,\ \d_tR\;\in\,L^1_T(L^2)\qquad \mbox{ and }\qquad \nabla U\,,\ \nabla B\,,\ \nabla R\,\in\,L^2_T(L^q)\,,
\end{equation}
for some $q>2$. Conditions \eqref{cond:1} are enough also for the last two terms appearing in \eqref{def:R} to be well-defined. Finally, for the last integral appearing in the second line of \eqref{def:R},
we remark that, by Gagliardo-Nirenberg inequality of Lemma \ref{GN}, we deduce the property $b\in L^{2p/(p-2)}_T(L^p)$ for any $2\leq p<+\infty$, and the same actually holds true also for 
$\de u$ and $\de b$. Taking $p=4$, we infer that $b$, $\de u$ and $\de b$ belong to $L^4_T(L^4)$, so that conditions \eqref{cond:1} are also enough for treating
those terms.

The proof to Theorem \ref{t:rel-en} is now completed.

\section{Derivation of quasi-homogeneous MHD systems in $2$-D} \label{s:rotation}

This section is devoted to the proof of Theorems \ref{THSt} and \ref{th:InvConv}, concerning the rigorous derivation of systems \eqref{EQLim} and \eqref{eq:IdMHD} respectively,
and carried out respectively in Subsections \ref{ss:deriv-NS} and \ref{ss:VanVisc} below.
The main tool will be the relative entropy inequality for the primitive system, proved in the previous section.

Notice that the proof in the viscous and resistive case is actually more delicate than in the ideal case, inasmuch as one disposes of less regularity on the limit points $(R, U, B)$, which act as the smooth functions to be used in the relative entropy inequality \eqref{def:entropy}.

\subsection{Derivation of the viscous and resistive system} \label{ss:deriv-NS}

In this subsection we carry out the proof of Theorem \ref{THSt}.

To begin with, we need the following result from \cite{Cobb-F} (see Theorem 5.3 therein), which makes quantitative the regularity properties stated in Theorem \ref{th:WPL}.

\begin{prop}\label{p:LimitEstimates}
Let $\beta\in\,]0,1[\,$ and $(R_0, U_0, B_0) \in H^{1 + \beta} \times H^1 \times H^1$ be a set of initial data, and consider $(R, U, B)$ the corresponding unique solution to system \eqref{EQLim}
provided by Theorem \ref{th:WPL}. Then the following estimates hold true:
\begin{enumerate}[(i)]
\item the basic \emph{energy inequality}: for any $t \geq 0$, one has
\[ 
\frac{1}{2} \big\| \big( U(t), B(t) \big) \big\|_{L^2} + \int_0^t \int_\Omega \Big( \nu(1) \| \nabla U \|_{L^2}^2 + \mu(1) \| \nabla B \|_{L^2} \Big) \dx \, {\rm d \tau}
\leq \frac{1}{2} \big\| (U_0, B_0) \big\|_{L^2}\,;
\] 
\item the basic \emph{transport estimates}: for any $p\in[2,+\infty]$ and any $t\geq0$, one has
$$
\| R(t) \|_{L^p} = \| R_0 \|_{L^p}\,;
$$
\item for any exponent $h > 0$, there is a constant $C = C\big( \| R_0 \|_{L^2 \cap L^\infty}, \| (U_0, B_0) \|_{H^1}, h \big) > 0$ such that, for any time $T > 0$, one has
\[ 
\big\|  \nabla U \big\|_{L^\infty_T(L^2)} + \big\|  ( \Delta U, \partial_t U) \big\|_{L^2_T(L^2)} + \big\|  \nabla B \big\|_{L^\infty_T(L^2)} + 
\big\|  ( \Delta B, \partial_t B) \big\|_{L^2_T(L^2)} \leq C \left( 1 + T^h \right)\,;
\] 
\item for any $0 < \gamma < \beta$, there exists $C=C(\g,\beta)>0$ such that, any fixed time $T > 0$, one has
\[ 
\| R \|_{L^\infty_T(H^\gamma)} \leq C \| R_0 \|_{H^\beta} \exp \left\{ C \left( \int_0^T \| \nabla U \|_{H^{1}} \dt \right)^2 \right\}\,.
\] 
\end{enumerate}
\end{prop}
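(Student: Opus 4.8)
The plan is to establish the four items in turn; (i) and (ii) are immediate, (iii) is the substantial part, (iv) is a transport estimate, and the only genuinely delicate point is a bootstrap coupling between (iii) and (iv). For (ii): since $\div U=0$, the density equation is the pure transport equation $\d_tR+U\cdot\nabla R=0$, so $R$ is constant along the measure-preserving flow of $U$ and every $L^p$ norm, $p\in[2,+\infty]$, is conserved. For (i): testing the momentum equation of \eqref{EQLim} against $U$ and the magnetic equation against $B$, the convective terms and the pressure drop by incompressibility, the Coriolis term vanishes pointwise because $U^\perp\cdot U=0$, and the two magnetic coupling terms $\int\D(B\otimes B)\cdot U$ and $\int[\D(U\otimes B)-\D(B\otimes U)]\cdot B$ contribute the opposite quantities $\mp\int(B\cdot\nabla)U\cdot B$; what remains is exactly the energy identity, and discarding the (nonnegative) dissipation gives (i). It is worth stressing that this energy bound is \emph{uniform in $T$}, i.e. $\int_0^{+\infty}\big(\|\nabla U\|_{L^2}^2+\|\nabla B\|_{L^2}^2\big)\dt<+\infty$; this will be the key to (iii).

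For (iii) I would run a parabolic $\dot H^1$ estimate in the spirit of the two-dimensional Navier--Stokes equations: test the momentum and magnetic equations against $-\Delta U$ and $-\Delta B$ respectively. The convective and magnetic coupling terms are handled with the Ladyzhenskaya inequality $\|f\|_{L^4}^2\lesssim\|f\|_{L^2}\|\nabla f\|_{L^2}$, which lets one absorb a small multiple of $\|\Delta U\|_{L^2}^2+\|\Delta B\|_{L^2}^2$ into the dissipation, leaving a remainder of the form $C\big(\|U\|_{L^2}^2\|\nabla U\|_{L^2}^2+\|B\|_{L^2}^2\|\nabla B\|_{L^2}^2\big)\big(\|\nabla U\|_{L^2}^2+\|\nabla B\|_{L^2}^2\big)$. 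The density--Coriolis term $-\int RU^\perp\cdot\Delta U\dx$ is the one requiring care: integrating by parts onto $R$, the piece $\int R\,\d_kU^\perp\cdot\d_kU\dx$ vanishes identically (expand $U^\perp=(-U_2,U_1)$), so only $\int(\d_kR)\,(U^\perp\cdot\d_kU)\dx$ survives, and this one is estimated using the finer regularity of $R$ supplied by (iv) together with an interpolation of $U$ between $L^\infty_T(H^1)$ and $L^2_T(H^2)$ and Young's inequality. Assembling everything into a single differential inequality, integrating in time and applying Gr\"onwall's lemma — using the uniform-in-$T$ dissipation bound to make the Gr\"onwall factor finite independently of $T$ — yields the bounds on $\|\nabla U\|_{L^\infty_T(L^2)}$ and $\|\nabla B\|_{L^\infty_T(L^2)}$, and re-integrating the dissipation gives $\|(\Delta U,\Delta B)\|_{L^2_T(L^2)}$; the factor $1+T^h$ with $h>0$ arbitrarily small enters only through the Coriolis contribution, estimated by H\"older in time, which trades a power $T^h\to0^+$ against the exponent produced by $R$'s extra integrability. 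Finally, $\d_tU,\d_tB\in L^2_T(L^2)$ follows by reading them off from the equations, since $\Delta U$, $U\cdot\nabla U$, $\nabla\Pi$ (recovered from its elliptic equation $\Delta\Pi=-\D(U\cdot\nabla U-\D(B\otimes B))+\ldots$), $RU^\perp$ and $\D(B\otimes B)$ have all just been placed in $L^2_T(L^2)$.

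For (iv), since $R$ solves $\d_tR+U\cdot\nabla R=0$ with $\div U=0$, I would apply the transport estimate in the fractional Sobolev space $H^\gamma=B^\gamma_{2,2}$ with $0<\gamma<\beta\le1$ (Theorem \ref{th:transport} in the Appendix): because the target regularity index is strictly less than $1$, no $L^1_T(L^\infty)$ control of $\nabla U$ is needed, and it suffices to control $\nabla U$ in $L^1_T(H^1)$, which gives $\|R(t)\|_{H^\gamma}\le\|R_0\|_{H^\gamma}\exp\!\big(C\int_0^t\|\nabla U\|_{H^1}\dt\big)$. Combining $\|R_0\|_{H^\gamma}\le\|R_0\|_{H^\beta}$ with Cauchy--Schwarz in time — which produces the square in the exponent, via $\int_0^T\|\nabla U\|_{H^1}\le T^{1/2}\|\nabla U\|_{L^2_T(H^1)}$, with $\|\nabla U\|_{L^2_T(H^1)}$ exactly the quantity controlled in (iii) — yields the stated estimate. (The first derivatives of $R$ are propagated in the same way, applying the transport estimate to $\d_jR$, whose equation carries the source $-\d_jU\cdot\nabla R$, controlled by a product estimate using the margin $\gamma<\beta$.)

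I expect the main obstacle to be precisely this circular dependence between (iii) and (iv): in two dimensions $U$ is not bounded, so the Coriolis term in the $\dot H^1$ estimate for $(U,B)$ cannot be closed on energy-level information alone and genuinely needs the improved regularity of $R$ coming from (iv), while that regularity of $R$ in turn needs $\nabla U\in L^2_T(H^1)$ from (iii). I would resolve this either by a continuity/bootstrap argument on a maximal interval on which an a priori bound for $\|(\nabla U,\nabla B)\|_{L^\infty_T(L^2)}$ together with the relevant norm of $R$ is postulated, or — since, once the uniform energy bound is in hand, all the inequalities above are of Gr\"onwall type — by applying Gr\"onwall's lemma simultaneously to the sum of these norms.
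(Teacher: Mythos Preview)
The paper does not actually prove this proposition: it is quoted from the authors' companion paper \cite{Cobb-F} (Theorem~5.3 there), so there is no in-paper argument to compare your attempt against. Your overall architecture --- energy identity for (i), $L^p$ conservation along the divergence-free flow for (ii), $-\Delta$ testing with Ladyzhenskaya for (iii), and a fractional transport estimate for (iv) --- is the natural one and is almost certainly what the cited proof does.

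Two points deserve comment. First, your appeal to Theorem~\ref{th:transport} for (iv) is not literally correct: that statement is formulated under the Lipschitz condition \eqref{i_eq:Lip}, which $H^\gamma=B^\gamma_{2,2}$ with $\gamma<1$ does not satisfy. You need instead the low-regularity transport estimate from \cite{BCD} (valid on the range $-1<s<1+d/p$), and one should check that in two dimensions $\nabla U\in L^1_T(H^1)$ genuinely suffices there rather than $\nabla U\in L^1_T(L^\infty)$; this refinement is of Danchin type and is presumably spelled out in \cite{Cobb-F}. Second, and more substantially, the origin of the factor $1+T^h$ with $h>0$ \emph{arbitrary} in (iii), together with the resolution of the (iii)--(iv) circularity, is not yet pinned down in your sketch. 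Your cancellation $\int R\,\partial_kU^\perp\!\cdot\partial_kU\,\dx=0$ is correct and useful, but the surviving piece $\int(\partial_kR)(U^\perp\!\cdot\partial_kU)\,\dx$ then feeds on $\nabla R$ regularity from (iv), whose bound is exponential in the very quantity you are estimating. A circularity-free alternative for (iii) is simply not to integrate by parts and bound the Coriolis contribution by $\|R\|_{L^\infty}\|U\|_{L^2}\|\Delta U\|_{L^2}\le C_0\|\Delta U\|_{L^2}$ using only (i)--(ii); after Young and Gr\"onwall this already gives (iii) with $h=\tfrac12$ and decouples (iv) entirely. Upgrading $\tfrac12$ to arbitrary $h>0$ is where a sharper argument is required, and your one-line explanation (``trades a power $T^h$ against $R$'s extra integrability'') does not yet identify the mechanism; this is the step you should extract from \cite{Cobb-F}.
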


Now we can proceed to the proof of Theorem \ref{THSt}. As already mentioned, the main tool here is the relative entropy inequality established in Theorem \ref{t:rel-en} above.

\subsubsection{Using the relative entropy inequality} \label{sss:rel-entr-rot}

Owing to the properties stated in Theorem \ref{th:WPL} and standard product rules in Sobolev spaces (see Proposition \ref{p:op} below),
one can see that the solution $\big(R,U,B\big)$ to the limit problem \eqref{EQLim} meets the regularity requirements of Theorem \ref{t:rel-en}. Thus, it
can be employed as a test function in the relative entropy functional \eqref{def:entropy}.

Therefore, the relative entropy inequality \eqref{est:rel-entropy} yields, for all $T>0$, the estimate
\begin{align*}
&\mc E\Big([r_\eps,u_\eps,b_\eps]\,\Big|\,[R,U,B]\Big)(T)\,+\,\int^T_0\!\!\!\int_\Omega\left(\nu(\rho_\eps)\,\big|\nabla\de u_\eps\big|^2\,+\,
\mu(\rho_\eps)\,\big|\nabla\times \de b_\eps\big|^2\right)\dx\,\dt \\ 
&\qquad\qquad\qquad\qquad\qquad\qquad\qquad\qquad\leq\,\mc E\Big([r_{0,\eps},u_{0,\eps},b_{0,\eps}]\,\Big|\,[R_0,U_0,B_0]\Big)\,+\,\int^T_0\mc R_\eps\,\dt\,, 
\end{align*}
where the reminder term $\mc R_\eps\,:=\,\mc R\big(r_\eps,u_\eps,b_\eps;R,U,B\big)$ is defined by the formula
\begin{align*}
\mc R_\eps\,=\,\sum_{j=1}^7I_j\,=&\,
-\int_\Omega\rho_\eps\left(\d_tU+ (u_\eps\cdot\nabla) U+\frac{1}{\eps}U^\perp\right)\cdot\de u_\eps\,\dx\,-\int_\Omega\big(\d_tB+ (u_\eps\cdot\nabla) B\big)\cdot\de b_\eps\,\dx \\
&\,-\int_\Omega\big(\d_tR+u_\eps\cdot\nabla R\big)\cdot\de r_\eps\,\dx\,+\int_\Omega\big(b_\eps\cdot\nabla \big) U \cdot\de b_\eps\,\dx\,+\int_\Omega\big(b_\eps\cdot\nabla \big) B \cdot\de u_\eps\,\dx \\
&\,-\int_\Omega\nu(\rho_\eps)\,\nabla U:\nabla \de u_\eps\,\dx\,-\int_\Omega\mu(\rho_\eps)\,\big(\nabla\times B\big)\,\big(\nabla\times \de b_\eps\big)\,\dx\,. 
\end{align*}

At this point, we use the fact that $\big(R,U,B\big)$ is a strong solution of the limit system \eqref{EQLim}. We start by focusing on the density term $R$: using the first relation in \eqref{EQLim} yields
\begin{equation} \label{eq:R-term}
\d_tR+u_\eps\cdot\nabla R\,=\,-U\cdot\nabla R+u_\eps\cdot\nabla R\,=\,\de u_\eps\cdot\nabla R\,.
\end{equation}

Similarly, for the magnetic field we get
$$
\d_tB+ (u_\eps\cdot\nabla) B\,=\, (\du \cdot\nabla) B+(B\cdot\nabla) U+\mu(1)\Delta B\,.
$$
Observe that $\Delta B\,=\,\nabla^\perp\nabla\times B$, owing to the divergence-free condition on $B$. Therefore, putting together $I_2$, $I_4$ and $I_7$, thanks
to the previous equality we can write
\begin{multline} \label{eq:B-term}
I_2\,+\,I_4\,+\,I_7\,
=\,\int_\Omega\big(\de b_\eps\cdot\nabla \big) U \cdot\de b_\eps\,-  \int_\Omega (\du \cdot \nabla)B \cdot \db \\
- \,\int_\Omega\big(\mu(\rho_\eps)-\mu(1)\big)\big(\nabla\times B\big)\,\big(\nabla\times \de b_\eps\big)\,,
\end{multline}
where we have also used an integration by parts for the term presenting $\Delta B$.

It remains us to deal with the velocity field. First of all, because of the decomposition $\rho_\eps\,=\,1+\eps r_\eps$, the term $I_1$ can be recasted as
\begin{align}
I_1\,&=\,-\int_\Omega\left(\d_tU+ (u_\eps\cdot \nabla) U+\frac{1}{\eps}U^\perp\right)\cdot\de u_\eps-\eps\int_\Omega r_\eps\left(\d_tU+(u_\eps\cdot\nabla) U+\frac{1}{\eps}U^\perp\right)\cdot\de u_\eps 
\label{eq:I_1} \\
&=\,-\int_\Omega\big(\d_tU+ (u_\eps\cdot\nabla) U\big)\cdot\de u_\eps\,-\,\eps\int_\Omega r_\eps\left(\d_tU+ (u_\eps\cdot\nabla) U+\frac{1}{\eps}U^\perp\right)\cdot\de u_\eps \nonumber \\
&=\,I_{11}+I_{12}\,. \nonumber
\end{align}
Notice that, in passing from the first to the second line, we have got rid of the singular term $\eps^{-1}U^\perp$ in the first integral. Indeed, the condition $\div U=0$ implies that $U^\perp$ is a
perfect gradient; since $\div \de u_\eps=0$, in turn we get that the term under consideration actually vanishes.

Thus, let us come back to \eqref{eq:I_1}, and
focus on $I_{11}$ for a while. For dealing with this term, we can use the second equation in \eqref{EQLim} to write
\begin{align*}
I_{11}\,=\,-\int_\Omega\left( (\de u_\eps\cdot \nabla) U - RU^\perp + \nu(1) \Delta U + (B\cdot\nabla) B\right)\cdot\de u_\eps\,,
\end{align*}
where, once again, we have omitted to write the terms which are pure gradients, since $\div\de u_\eps=0$.
Combining this term with $I_5$ and $I_6$, we find, after integration by parts
\begin{align*}
I_{11}\,+\,I_5\,+\,I_6\,&=\,-\int_\Omega\left( (\de u_\eps\cdot\nabla) U - (\de b_\eps\cdot\nabla) B - RU^\perp\right)\cdot\de u_\eps\,-\,\int_\Omega\big(\nu(\rho_\eps)-\nu(1)\big)\nabla U:\nabla\de u_\eps\,.
\end{align*}
Plugging the expression of $I_{12}$ into this equation, we finally get
\begin{align}
I_1\,+\,I_5\,+\,I_6\,&=\,-\int_\Omega\left( (\de u_\eps\cdot\nabla) U - (\de b_\eps\cdot\nabla) B - \de r_\eps U^\perp\right)\cdot\de u_\eps \label{eq:U-term} \\
&\qquad\qquad-\int_\Omega\big(\nu(\rho_\eps)-\nu(1)\big)\nabla U:\nabla\de u_\eps-\,\eps\int_\Omega r_\eps\big(\d_tU+ (u_\eps\cdot\nabla) U\big)\cdot\de u_\eps\,. \nonumber
\end{align}

In the end, by use of \eqref{eq:R-term}, \eqref{eq:B-term} and \eqref{eq:U-term}, we deduce that
\begin{align}
\mc R_\eps\,=&\,-\int_\Omega\de r_\eps\,\de u_\eps\cdot\nabla R\,+\,  \int_\Omega \Big( (\de b_\eps\cdot\nabla) U  -  (\du \cdot \nabla)B \Big)\cdot\de b_\eps \label{eq:RemFinal} \\
&\,-\,\int_\Omega\big(\mu(\rho_\eps)-\mu(1)\big)\big(\nabla\times B\big)\,\big(\nabla\times \de b_\eps\big) \nonumber \\
&\,-\,\int_\Omega\left( (\de u_\eps\cdot\nabla) U - (\de b_\eps\cdot\nabla) B - \de r_\eps U^\perp\right)\cdot\de u_\eps\, \nonumber \\
&\,-\,\int_\Omega\big(\nu(\rho_\eps)-\nu(1)\big)\nabla U:\nabla\de u_\eps\,-\,\eps\int_\Omega r_\eps\big(\d_tU+ (u_\eps\cdot\nabla) U\big)\cdot\de u_\eps\,=\,\sum_{\ell=1}^6J_\ell\,. \nonumber 
\end{align}

Our next goal is to bound all the terms $J_\ell$ appearing in the previous expression: this will be done in the next paragraph. Notice that
those bounds will complete the proof to Theorem \ref{THSt}.
Indeed, since the density is a perturbation of a constant state, \tsl{i.e.} $\rho_\veps = 1 + \veps r_\veps$, the relative entropy $\mathcal{E}\Big([r_\eps,u_\eps,b_\eps]\,\Big|\,[R,U,B]\Big)$
is in fact equivalent to the $L^2$ norm of the error function $(\dr, \du, \db)$.

\subsubsection{Quantitative estimates for the viscous resistive system} \label{sss:quantitative}

In the computations below, we make extensive use of the Gagliardo-Nirenberg inequality (GN for short) reproduced in Proposition \ref{GN}.
According to the notation introduced above, we agree to note $M_p(t) \in L^p(\R_+)$ generic globally $L^p$ functions; on the other hand, we will use the notation $N_p(t)$ to denote functions
in $L^p_{\rm loc}(\R_+)$.

\medbreak
Let us bound all the terms appearing in \eqref{eq:RemFinal}. 
We start by handling $J_1$. Recall that $R \in L^\infty_T(H^{1 + \gamma})$ for any $0 \leq \gamma < \beta$. Making use of H\"older's and GN inequalities, we get
\begin{align*}
\left|  \int_\Omega (\nabla R \cdot \du) \dr \dx  \right| & \leq \| \dr \|_{L^2} \| \du \|_{L^p} \| \nabla R \|_{L^q} \leq
C \| \nabla R \|_{L^q} \| \dr \|_{L^2} \| \du \|_{L^2}^{2/p} \| \nabla( \du) \|_{L^2}^{1 - 2/p}\\
& \leq  \eta \| \nabla (\du) \|_{L^2}^2 + C(\eta,q) \| \nabla R \|_{L^q}^{q'} \| \dr \|_{L^2}^{q'} \| \du \|_{L^2}^{2q'/p}\,,
\end{align*}
where $\eta > 0$ is arbitrarily small, $p, q \geq 2$ are chosen so that $\frac{1}{p} + \frac{1}{q} = \frac{1}{2}$ and the exponent $q'$ is associated to $q$ in Young's inequality by
$\frac{1}{q} + \frac{1}{q'} = 1$. Using Young's inequality one more time with the exponents $\alpha = \frac{2(q-1)}{q}$ and $\beta = \frac{2(q-1)}{q-2}$ (which satisfy
$\frac{1}{\alpha} + \frac{1}{\beta} = 1$) allows us to introduce the relative entropy function $\mc E (t)$ in the right-hand side:
\begin{equation*}
\left|  \int_\Omega (\nabla R \cdot \du) \dr \dx  \right| \leq   \eta \| \nabla(\du) \|_{L^2}^2 + C(\eta,q) \left(1+\| \nabla R \|^{2}_{L^q}\right) \mc E(t)\,.
\end{equation*}
Now since $\nabla R \in L^\infty_T(H^\gamma)$, we see that $\nabla R \in L^\infty_T(L^q)$ for $q$ close enough to $2$, thanks to Sobolev embedding. For such $q$, it is always possible to find a
$p \geq 2$ with $\frac{1}{p} + \frac{1}{q} = \frac{1}{2}$, so that all of the preceding inequalities are justified. \textsl{In fine}, using Proposition \ref{p:LimitEstimates},
we find the following inequality:
\begin{equation} \label{est:J_1}
\left|  \int_\Omega (\nabla R \cdot \du) \dr \dx  \right| \leq  \eta \| \nabla (\du) \|_{L^2}^2 + C(\eta,q,\|r_0\|_{H^{1+\beta}},T)  \mc E(t)\,.
\end{equation}

Next, we look at $J_2$. Using H\"older's inequality with exponents $\frac{1}{2} + \frac{1}{4} + \frac{1}{4} = 1$, we get 
\begin{equation*}
\left|  \int_\Omega (\db \cdot \nabla) U \cdot \db \dx  \right| \, \leq\, \| \nabla U \|_{L^2}  \| \db \|_{L^4}^2 \,\leq\, \| \nabla U \|_{L^2} \| \db \|_{L^2} \| \nabla \db \|_{L^2},
\end{equation*}
where we have also exploited GN inequality. Using now Young's inequality, we infer, for all $\eta > 0$, the bound
\begin{equation*}
\begin{split}
\left|  \int_\Omega (\db \cdot \nabla) U \cdot \db \dx  \right| & \leq \eta \| \nabla \db \|_{L^2}^2 + C(\eta) \| \nabla U \|_{L^2}^2 \| \db \|_{L^2}^2\\
& = \eta \| \nabla \db \|_{L^2}^2 + M_1(t) \| \db \|_{L^2}^2,
 \end{split}
\end{equation*}
where we have set $M_1(t)\,=\,C(\eta)\| \nabla U(t) \|_{L^2}^2$. Notice that, thanks to the estimates of Proposition \ref{p:LimitEstimates}, one has
$M_1\in L^1(\R_+)$, with $\| M_1 \|_{L^1(\mathbb{R}_+)} = C(\eta, \| u_0 \|_{L^2}, \| b_0 \|_{L^2})$.
In the very same way, we also have 
\begin{equation*}
\begin{split}
\left| \int_\Omega (\du \cdot \nabla) B \cdot \db \dx \right| & \leq  \| \nabla B \|_{L^2} \| \du \|_{L^4} \| \db \|_{L^4}\\
& \leq C \| \nabla B \|_{L^2}  \| \du \|_{L^2}^{1/2} \| \nabla \du \|_{L^2}^{1/2} \|  \db \|_{L^2}^{1/2}  \| \nabla \db \|_{L^2}^{1/2}\\
& \leq \eta \Big( \| \nabla \du \|_{L^2}^2 + \| \nabla \db \|_{L^2}^2 \Big) + C(\eta) \| \nabla B \|_{L^2}^2 \Big( \| \du \|_{L^2}^2 + \| \db \|_{L^2}^2 \Big)\\
& = \eta \Big( \| \nabla \du \|_{L^2}^2 + \| \nabla \db \|_{L^2}^2 \Big) + M_1(t) \mc E(t),
\end{split}
\end{equation*}
where $M_1(t) = \| \nabla B(t) \|_{L^2}$ is, as above, an $L^1(\R_+)$ function whose $L^1$ norm is bounded by a constant $C = C(\eta, \| u_0 \|_{L^2}, \| b_0 \|_{L^2})$.
In the end, we have proved the following bound for $J_2$:
\begin{equation} \label{est:J_2}
 \left|J_2\right|\,\leq\,2\eta \Big( \| \nabla \du \|_{L^2}^2 + \| \nabla \db \|_{L^2}^2 \Big) + M_1(t) \mc E(t)\,,
\end{equation}
where $\eta>0$ can be chosen arbitrarily small.

We now consider $J_3$. By assumption, $\mu$ is $\sigma$-continuous in a neighbourhood of $1$, with $\s$ being non-decreasing. Therefore, for $\veps>0$ so small that
$\veps M\leq 1$, with $M$ defined in the statement of Theorem \ref{THSt}, we can estimate
\begin{align} \label{est:J_3}
\left|  \int_\Omega \big( \mu(\rho_\veps) - \mu(1) \big) (\nabla \times  B) \cdot ( \nabla \times \db)  \right| & \leq
| \mu |_{C_\sigma} \sigma\left(\veps \| r_\veps \|_{L^\infty}\right) \| \nabla \times B \|_{L^2} \| \nabla \times \db \|_{L^2}\\
& \leq \eta \| \nabla \times \db \|_{L^2}^2 + C(\eta, |\mu|_{C_\s}) \sigma(M\veps)^2 \| \nabla \times B \|_{L^2}^2 \nonumber \\
& = \eta \| \nabla \times \db \|_{L^2}^2 + \sigma(M \veps)^2 M_1(t)\,, \nonumber
\end{align}
where $M_1\in L^1(\R_+)$, with (in view of Proposition \ref{p:LimitEstimates}) $\| M_1 \|_{L^1(\mathbb{R}_+)}$ depending only on $\eta$, $| \mu|_{C_\s}$, $\| u_{0} \|_{L^2}$ and $\| b_{0} \|_{L^2}$.
The integral $J_5$ containing the viscosity term is dealt with in the same way.

For $J_4$, we separate the integral into three summands:
\begin{equation*}
J_4 = \int_\Omega (\db \cdot \nabla) B \cdot \du \dx - \int_\Omega (\du \cdot \nabla) U \cdot \du \dx -  \int_\Omega \dr U^\perp \cdot \du \dx := J_{4,1} + J_{4,2} + J_{4,3}\,.
\end{equation*}
The first term $J_{4,1}$ can be dealt with in a way analogous to the second term appearing in $J_2$: combining H\"older and GN inequalities, we get
\[ 
\left|  \int_\Omega (\db \cdot \nabla) B \cdot \du \dx \right|\,\leq\, \| \du \|_{L^4} \| \db \|_{L^4} \| \nabla B \|_{L^2}\,
\leq\, \eta \Big( \| \nabla \du \|_{L^2}^2 + \| \nabla \db \|_{L^2}^2 \Big) + M_1(t) \mc E(t)\,,
\]
where $M_1 \in L^1(\R_+)$, with $\| M_1 \|_{L^1(\mathbb{R}_+)}$ being a function of $(\eta, \| u_0 \|_{L^2}, \| b_0 \|_{L^2})$.
Now look at $J_{4,2}$: 
a very similar argument yields, for $M_1(t)\,=\,C(\eta) \| \nabla U (t) \|_{L^2}^2$, with $M_1\in L^1(\R_+)$, the inequality
\[ 
\left|  \int_\Omega (\du \cdot \nabla) U \cdot \du \dx  \right| \leq \eta \| \nabla(\du) \|_{L^2}^2 +  M_1(t) \mc E(t)\,.
\] 
Finally, we notice that $J_{4,3}$ is very similar to $J_1$, up to substituting $\nabla R$ with $U$.
In fact, since $U \in L^\infty_T(H^1)$ (see Proposition \ref{p:LimitEstimates}), and not only $L^\infty_T(H^\gamma)$ as $\nabla R$ before, it suffices to conduct the computations for any values
of $p$ and $q$: taking for simplicity $p = q = 4$, we deduce
\begin{align*}
\left|  \int_\Omega \dr U^\perp \cdot \du \dx  \right| & \leq \| U \|_{L^4} \| \dr \|_{L^2} \| \du \|_{L^4} \leq
\eta \| \nabla(\du) \|^2_{L^2} + C(\eta) \| U \|^{4/3}_{L^\infty_T(H^1)}   \| \dr \|_{L^2}^{4/3} \| \du \|_{L^2}^{2/3} \\
& \leq \eta \| \nabla (\du) \|_{L^2}^2 + C(\eta,\|u_0\|_{H^1}, \| b_0 \|_{H^1}, T)\,  \mc E(t)\,.
\end{align*}
Summing up all the last inequalities, we gather
\begin{equation} \label{est:J_4}
 \left|J_4\right|\,\leq\,3\eta \Big( \| \nabla \du \|_{L^2}^2 + \| \nabla \db \|_{L^2}^2 \Big) + \big(M_1(t)\,+\,C\big) \mc E(t)\,,
\end{equation}
where, for simplicity, we have omitted the various quantities on which the constant $C>0$ depends.

It remains to bound $J_6$. On the one hand, the integral containing the time derivative can be bounded by using Proposition \ref{p:LimitEstimates}:
\begin{align*}
\veps \left| \int_\Omega r_\veps \d_t U \cdot \du \dx \right| & \leq \veps \| \partial_t U \|_{L^2} \| r_\veps \|_{L^\infty} \| \du \|_{L^2}\,\leq\,
\veps^2 C(\| r_{0, \veps} \|_{L^\infty}) \| \partial_t U \|_{L^2}^2 + \mc E(t)\\
& \leq \veps^2 N_1(t) + \mc E(t),
\end{align*}
where $\| N_1 \|_{L^1_T}$ grows at polynomial speed $1+T^h$ and depends on $(h,\| r_{0, \veps} \|_{L^\infty}, \| u_0 \|_{H^1}, \| b_0 \|_{H^1}, T)$.
On the other hand, using H\"older's and GN inequalities we infer
\begin{align*}
\left|  \veps \int_\Omega r_\veps (u_\veps \cdot \nabla) U \cdot \du \dx  \right| & \leq \veps \| r_\veps \|_{L^\infty} \| u_\veps \|_{L^4} \| \nabla U \|_{L^2} \| \du \|_{L^4}\\
& \leq \veps C(\| r_{0, \veps} \|_{L^\infty}) \| u_\veps \|_{L^2}^{1/2} \| \nabla u_\veps \|_{L^2}^{1/2} \| \nabla U \|_{L^2} \| \du \|_{L^2}^{1/2} \| \nabla (\du) \|_{L^2}^{1/2}
\end{align*}
Recall that $u_\veps \in L^\infty (L^2)$. Using Young's inequality a first time with coefficients $\frac{1}{4} + \frac{3}{4} = 1$ yields
\[ 
\left|  \veps \int_\Omega r_\veps (u_\veps \cdot \nabla) U \cdot \du \dx  \right| \leq \eta \| \nabla (\du) \|_{L^2}^2
+ \veps^{4/3} C \| \nabla u_\veps \|_{L^2}^{2/3} \| \nabla U \|^{4/3}_{L^2} \| \du \|_{L^2}^{2/3}\,,
\]
with $C=C(\eta, \| r_{0, \veps} \|_{L^\infty}, \| u_{0, \veps} \|_{L^2}, \| b_{0, \veps} \|_{L^2})$, and a second time on the second summand with coefficients $\frac{1}{3} + \frac{2}{3} = 1$,
we gather
\begin{align*}
\left|  \veps \int_\Omega r_\veps (u_\veps \cdot \nabla) U \cdot \du \dx  \right| &\leq \eta \| \nabla (\du) \|_{L^2}^2 +  \| \du \|_{L^2}^2 \| \nabla u_\veps \|_{L^2}^2
+ \veps^{2} C \| \nabla U \|_{L^2}^2 \\
&\leq \eta \| \nabla (\du) \|_{L^2}^2 + M_1(t) \mc E(t) + \veps^{2} M_1(t)\,,
\end{align*}
where $C>0$ depends on the same quantities as the previous constant, and we have used the fact that both $\nabla u_\veps$ and $\nabla U$ belong to $L^2(\R_+;L^2)$ to introduce
the function $M_1\in L^1(\R_+)$. Notice that the $L^1$ norm of $M_1$
depends on $(\eta, \| r_{0, \veps} \|_{L^\infty}, \| u_{0, \veps} \|_{L^2}, \| b_{0, \veps} \|_{L^2})$.
In the end, we deduce that
\begin{equation} \label{est:J_6}
\left| J_6 \right|\,\leq\,\eta \Big( \| \nabla \du \|_{L^2}^2 + \| \nabla \db \|_{L^2}^2 \Big) + \big(M_1(t)\,+\,1\big) \mc E(t)\,+\,\veps^2\,\big(M_1(t)\,+\,N_1(t)\big)\,.
\end{equation}

Piecing inequalities \eqref{est:J_1}, \eqref{est:J_2}, \eqref{est:J_3}, \eqref{est:J_4}, \eqref{est:J_6} all together and taking $\eta$ small enough, say $\eta = \frac{1}{100} \min \{ \nu_*, \mu_* \}$,
we find
\begin{align*}
&\mc E(T) + \int_0^T \int_\Omega \bigg\{ \nu_* | \nabla \du |^2 + \mu_* | \nabla \db |^2  \bigg\} \dx \dt \\
&\qquad
\leq C
\left(\mc E\Big([r_{0,\eps},u_{0,\eps},b_{0,\eps}]\,\Big|\,[R_0,U_0,B_0]\Big) + \int_0^T \Big( \mathcal{M}_1(t) \mc E(t) + \max\left\{\veps^2\,,\,\s^2(M \veps)\right\} \mathcal{N}_1(t) \Big)  {\rm d}t\right)
\end{align*}
for any $T>0$,
with $\mathcal{M}_1$ and $\mathcal{N}_1$ being locally integrable functions on $\mathbb{R}_+$. 
Use of Gr\"onwall's lemma on this differential inequality provides the result we covet, namely inequality \eqref{EQRelIn}.

The proof of Theorem \ref{THSt} is thus completed.

\subsection{Vanishing viscosity and resistivity limit: derivation of the ideal system} \label{ss:VanVisc}

In this subsection, we show the proof of Theorem \ref{th:InvConv}, concerning the derivation of the ideal system, which corresponds to the case $h(\veps) \rightarrow 0^+$. With respect to the previous case, we lose any control on the gradients
of the quantities $\de u_\veps$ and $\de b_\veps$, since we have to deal with a vanishing viscosity and resistivity limit. On the other hand, the solution $(R,U,B)$ to the limit problem
will enjoy, on its lifespan, much more smoothness than in the previous section. In addition, we point out that the convergence here is limited to the time $T^*$ representing the lifespan
of $(R,U,B)$, which is possibly finite.

%

%
Also in this section, the main ingredient is the relative entropy inequality of Theorem \ref{t:rel-en}. We skip the proof of the fact that $(R,U,B)$ verifies indeed the regularity hypotheses
of that statement.

So, let us write the relative entropy inequality \eqref{est:rel-entropy} for $(r, u, b)$ and $(R, U, B)$: we get
\begin{multline}\label{eq:entrId}
\mc E \Big( [r_\veps, u_\veps, b_\veps] \big| [R, U, B] \Big)(T) + h(\veps) \int_0^T \int_\Omega \bigg\{ \nu(\rho_\veps) |\nabla \du|^2 + \mu(\rho_\veps)|\nabla\times \db|^2 \bigg\} \dx \dt \\
\leq \mc E\Big([r_{0,\eps},u_{0,\eps},b_{0,\eps}]\,\Big|\,[R_0,U_0,B_0]\Big) \int_0^T \mc R_\veps \dt.
\end{multline}
Performing exactly the same computations as in Paragraph \ref{sss:rel-entr-rot}, we get an expression for the reminder term analogous to \eqref{eq:RemFinal}: 
\begin{align*}
\mc R_\eps\,=&\,-\int_\Omega\de r_\eps\,\de u_\eps\cdot\nabla R\,+\,  \int_\Omega \Big( (\de b_\eps\cdot\nabla) U  -  (\du \cdot \nabla)B \Big)\cdot\de b_\eps \\ 
&\,-\,h(\veps) \int_\Omega \mu(\rho_\eps) \big(\nabla\times B\big)\,\big(\nabla\times \de b_\eps\big)\,-\,
\int_\Omega\left( (\de u_\eps\cdot\nabla) U - (\de b_\eps\cdot\nabla) B - \de r_\eps U^\perp\right)\cdot\de u_\eps\, \nonumber \\
&\,-\, h(\veps) \int_\Omega \nu(\rho_\eps) \nabla U:\nabla\de u_\eps\,-\,\eps\int_\Omega r_\eps\big(\d_tU+ (u_\eps\cdot\nabla) U\big)\cdot\de u_\eps\,=\,\sum_{\ell=1}^6J_\ell\,. \nonumber 
\end{align*}
We are going to bound all the integrals $J_1,\ldots, J_6$ one after the other.

First of all, for $J_1$ we have
\begin{equation*}
\begin{split}
\left| \int_\Omega \dr \du \cdot \nabla R \dx \right| & \leq \| \nabla R \|_{L^\infty} \| \dr \|_{L^2} \| \du \|_{L^2} \leq \| R \|_{L^\infty_T (H^s)} \Big( \| \dr \|_{L^2}^2 + \| \du \|_{L^2}^2 \Big) \\
&\leq C\big( T, \| (R_0, U_0, B_0) \|_{H^s} \big) \mc E(t)\,.
\end{split}
\end{equation*}
As for $J_2$, we argue in the very same way and use Sobolev inequality to get
\begin{equation*}
\begin{split}
\left| \int_\Omega \Big( (\db \cdot \nabla) U \cdot \db - (\du \cdot \nabla)B \cdot \db \Big) \dx \right| & \leq \Big( \|\nabla U \|_{L^\infty} + \|\nabla B \|_{L^\infty} \Big) \Big( \| \du \|_{L^2}^2 + \| \db \|_{L^2}^2 \Big) \\
&\leq C\big( T, \| (R_0, U_0, B_0) \|_{H^s} \big) \mc E(t)\,.
\end{split}
\end{equation*}
The fourth integral $J_4$ is dealt with in the same manner:
\begin{equation*}
\begin{split}
\bigg| \int_\Omega\Big( (\de u_\eps\cdot\nabla) U & + (\de b_\eps\cdot\nabla) B - \de r_\eps U^\perp\Big)\cdot\de u_\eps \dx \bigg| \\
& \leq  \| \nabla U \|_{L^\infty} \| \du \|_{L^2}^2 + \| \nabla B \|_{L^\infty} \| \du \|_{L^2} \| \db \|_{L^2} +  \| U \|_{L^\infty} \| \dr \|_{L^2} \| \du \|_{L^2}\\
& \leq C \big( T, \| (R_0, U_0, B_0) \|_{H^s} \big)\,\mc E(t)\,.
\end{split}
\end{equation*}

Now we take care of the integrals containing the derivatives of the error functions $\du$ and $\db$, namely $J_3$ and $J_5$.
Here, we use the fact that $\nabla \du$ and $\nabla \db$ have $L^2$ regularity, even though this property is not uniform with respect to $\veps$. More precisely,
the energy inequality for the primitive system \eqref{MHD1}, see item (vii) of Definition \ref{d:weak}, yields, for any $T>0$, the uniform bound
$$
\big\| \sqrt{h(\veps)} \,\, \nabla u_\veps \big\|_{L^2_T(L^2)} + \big\| \sqrt{h(\veps)} \,\, \mu(\rho_\veps) \nabla \times b_\veps \big\|_{L^2_T(L^2)} \leq C \big( \| u_{0, \veps} \|_{L^2}, \| b_{0, \veps}\|_{L^2} \big).
$$
This means that the derivatives of the difference functions $\nabla \du$ and $\nabla \db$ also have $L^2$ regularity and, thanks to the entropy inequality \eqref{eq:entrId}, 
they will enjoy similar bounds. Thus, for any small $\eta > 0$, we can estimate
\begin{multline*}
h(\veps) \left| \int_\Omega \mu(\rho_\veps) (\nabla \times B) (\nabla \times \db) \dx \right| \leq \sqrt{h(\veps)}\,\,  \| \nabla \times B \|_{L^2} \big\| \sqrt{h(\veps)}\,\, \mu(\rho_\veps) \nabla \times \db \big\|_{L^2} \\
\leq \eta \big\| \sqrt{h(\veps)} \,\, \mu(\rho_\veps) \nabla \times \db \big\|_{L^2_T(L^2)}^2 + h(\veps)\, C \big( T, \eta, \| (R_0, U_0, B_0) \|_{H^s} \big).
\end{multline*}
Exactly in the same way, we also have 
\begin{equation*}
h(\veps) \left| \int_\Omega \nu(\rho_\veps) \nabla  U :  \nabla \du \dx \right| \leq \eta \big\| \sqrt{h(\veps)} \,\, \nu(\rho_\veps) \nabla \du \big\|_{L^2_T(L^2)}^2 + h(\veps)\,
C \big( T, \eta, \| (R_0, U_0, B_0) \|_{H^s} \big).
\end{equation*}

Only the last integral $J_6$ remains. It involves the time derivative $\partial_t U$, whose $L^\infty_T(H^{s-1})$ regularity (for all $T<T^*$) is given by Theorem \ref{th:BesovWP}
above. Using the embedding $H^{s-1} \hookrightarrow L^\infty$, we finally gather
\begin{equation*}
\begin{split}
\veps \bigg| \int_\Omega r_\veps \big( \partial_t U + (u_\veps \cdot \nabla) U \big) \cdot \du \dx  \bigg| & \leq \veps \| \partial_t U \|_{L^\infty} \| r_\veps \|_{L^2} \| \du \|_{L^2} + \veps \| u_\veps \|_{L^2} \| \nabla U \|_{L^\infty} \| \du \|_{L^2}\\
& \leq \| \du \|_{L^2}^2 + \veps^2 \| \partial_t U \|_{H^{s-1}}^2 \| r_\veps \|^2_{L^2} + \veps^2 \| U \|_{H^s}^2 \| u_\veps \|_{L^2}^2   \\
& \leq \| \du \|_{L^2}^2 + \veps^2 C \big( T, \| (R_0, U_0, B_0) \|_{H^s},\| (r_{0, \veps},u_{0, \veps}, b_{0, \veps}) \|_{L^2} \big).
\end{split}
\end{equation*}

Putting all the estimates for $J_1, ..., J_6$ together and choosing $\eta$ small enough, we get
\begin{multline*}
\mc E \Big( [r_\veps, u_\veps, b_\veps] \big| [R, U, B] \Big)(T) + \frac{1}{2} h(\veps) \int_0^T \int_\Omega \bigg\{ \nu(\rho_\veps) |\nabla \du|^2 + \mu(\rho_\veps)|\nabla \db|^2 \bigg\} \dx \dt \\
\leq \int_0^T \mc E \Big( [r_\veps, u_\veps, b_\veps] \big| [R, U, B] \Big) \dt + \left(h(\veps) + \veps^2 \right) \, C,
\end{multline*}
for a suitable constant $C\,=\,C\Big( T, \| (R_0, U_0, B_0) \|_{H^s}, \| (r_{0, \veps},u_{0, \veps}, b_{0, \veps}) \|_{L^2} \Big)>0$. An application of Gr\"onwall's lemma gives estimate \eqref{EQRelIn2},
completing in this way the proof to Theorem \ref{th:InvConv}.


\appendix

\section{Appendix -- Fourier and harmonic analysis toolbox} \label{app:LP}

In this appendix, we collect tools from Fourier analysis and Littlewood-Paley theory which we have freely used throughout all our paper. We refer to Chapters 2 and 3 of \cite{BCD} for details.

\subsection{Non-homogeneous Littlewood-Paley theory and Besov spaces}

We recall here the main ideas of Littlewood-Paley theory, which we exploited in the previous analysis.
For simplicity of exposition, let us deal with the $\R^d$ case; however, the whole construction can also be adapted to the $d$-dimensional torus $\T^d$.

First of all, let us introduce the so-called ``Littlewood-Paley decomposition''.
We fix a smooth radial function $\chi$ supported in the ball $B(0,2)$, equal to $1$ in a neighbourhood of $B(0,1)$
and such that $r\mapsto\chi(r\,e)$ is non-increasing over $\R_+$ for all unitary vectors $e\in\R^d$. Set
$\varphi\left(\xi\right)=\chi\left(\xi\right)-\chi\left(2\xi\right)$ and
$\vphi_j(\xi):=\vphi(2^{-j}\xi)$ for all $j\geq0$.
The dyadic blocks $(\Delta_j)_{j\in\Z}$ are defined by\footnote{Throughout we agree  that  $f(D)$ stands for 
the pseudo-differential operator $u\mapsto\mc{F}^{-1}[f(\xi)\,\what u(\xi)]$.} 
$$
\Delta_j\,:=\,0\quad\mbox{ if }\; j\leq-2,\qquad\Delta_{-1}\,:=\,\chi(D)\qquad\mbox{ and }\qquad
\Delta_j\,:=\,\varphi(2^{-j}D)\quad \mbox{ if }\;  j\geq0\,.
$$
We  also introduce the following low frequency cut-off operator:
\begin{equation*} \label{eq:S_j}
S_ju\,:=\,\chi(2^{-j}D)\,=\,\sum_{k\leq j-1}\Delta_{k}\qquad\mbox{ for }\qquad j\geq0\,.
\end{equation*}
Note that the operator $S_j$ is a convolution operator with a function $K_j(x) = 2^{dj}K_1(2^j x) = \mathcal{F}^{-1}[\chi (2^{-j} \xi)] (x)$ of constant $L^1$ norm,
and hence defines a continuous operator for the $L^p \longrightarrow L^p$ topologies, for any $1 \leq p \leq +\infty$.

\medskip

The following classical property holds true: for any $u\in\mc{S}'$, then one has the equality~$u=\sum_{j}\Delta_ju$ in the sense of $\mc{S}'$.
Let us also mention the so-called \emph{Bernstein inequalities}.
  \begin{lemma} \label{l:bern}
Let  $0<r<R$.   A constant $C$ exists so that, for any nonnegative integer $k$, any couple $(p,q)$ 
in $[1,+\infty]^2$, with  $p\leq q$,  and any function $u\in L^p$,  we  have, for all $\lambda>0$,
$$
\displaylines{
{\Supp}\, \widehat u \subset   B(0,\lambda R)\quad
\Longrightarrow\quad
\|\nabla^k u\|_{L^q}\, \leq\,
 C^{k+1}\,\lambda^{k+d\left(\frac{1}{p}-\frac{1}{q}\right)}\,\|u\|_{L^p}\;;\cr
{\Supp}\, \widehat u \subset \{\xi\in\R^d\,|\, r\lambda\leq|\xi|\leq R\lambda\}
\quad\Longrightarrow\quad C^{-k-1}\,\lambda^k\|u\|_{L^p}\,
\leq\,
\|\nabla^k u\|_{L^p}\,
\leq\,
C^{k+1} \, \lambda^k\|u\|_{L^p}\,.
}$$
\end{lemma}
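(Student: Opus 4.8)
The plan is to prove both estimates by first scaling $\lambda$ out of the problem and then writing the relevant operators as convolutions against \emph{fixed} Schwartz kernels, after which Young's convolution inequality does the work with constants depending only on $d$, $r$ and $R$. For the first estimate I would fix a cut-off $\phi\in\mathcal{D}(\R^d)$ with $\phi\equiv1$ on $B(0,R)$ and $\Supp\phi\subset B(0,2R)$. When $\Supp\widehat u\subset B(0,\lambda R)$ one has $\phi(\lambda^{-1}D)u=u$, hence $\partial^\alpha u=\mathcal{F}^{-1}\big[(i\xi)^\alpha\phi(\xi/\lambda)\big]\star u$ for $|\alpha|=k$, and a change of variable gives $\mathcal{F}^{-1}\big[(i\xi)^\alpha\phi(\xi/\lambda)\big](x)=\lambda^{k+d}g_\alpha(\lambda x)$ with $g_\alpha:=\mathcal{F}^{-1}\big[(i\zeta)^\alpha\phi(\zeta)\big]$. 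Young's inequality with $1+1/q=1/m+1/p$ (so $m\in[1,+\infty]$, using $p\le q$) then yields
\begin{equation*}
\|\partial^\alpha u\|_{L^q}\,\le\,\lambda^{k+d(1/p-1/q)}\,\|g_\alpha\|_{L^m}\,\|u\|_{L^p}\,,
\end{equation*}
so everything reduces to the uniform bound $\|g_\alpha\|_{L^m}\le C^{k+1}$. I would obtain this by the interpolation inequality $\|g_\alpha\|_{L^m}\le\|g_\alpha\|_{L^1}^{1/m}\|g_\alpha\|_{L^\infty}^{1-1/m}$, controlling $\|g_\alpha\|_{L^\infty}$ directly from the integral formula and $\|g_\alpha\|_{L^1}$ by integration by parts: for $N$ large, $(1+|x|^2)^Ng_\alpha(x)$ is the inverse transform of $(\mathrm{Id}-\Delta_\zeta)^N\big((i\zeta)^\alpha\phi(\zeta)\big)$, whose $L^1$ norm is bounded on $\Supp\phi$ once the derivatives of $\zeta\mapsto(i\zeta)^\alpha$ are estimated. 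Summing over the multi-indices of length $k$ gives the stated inequality for $\|\nabla^k u\|_{L^q}$.

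For the second estimate the upper bound is just the case $q=p$ of the first, since the annulus sits inside $B(0,R\lambda)$. For the lower bound I would reduce to $k=1$ by iteration: if $\Supp\widehat u$ lies in the annulus, so does $\Supp\widehat{\partial_ju}$, and each iteration costs only a fixed multiplicative constant, producing the right $C^{k+1}$. When $k=1$, I would pick $\eta\in\mathcal{D}(\R^d)$ with $\eta\equiv1$ on $\{r\le|\zeta|\le R\}$ and $\Supp\eta\subset\{r/2\le|\zeta|\le2R\}$ and use, on $\Supp\widehat u$, the identity
\begin{equation*}
\widehat u(\xi)\,=\,\eta(\xi/\lambda)\,\widehat u(\xi)\,=\,\sum_{j=1}^d\frac{-i\,\xi_j\,\eta(\xi/\lambda)}{|\xi|^2}\,\widehat{\partial_ju}(\xi)\,,
\end{equation*}
which is legitimate precisely because $\eta$ is supported away from the origin, so that each $m_j(\zeta):=-i\zeta_j\eta(\zeta)/|\zeta|^2$ is a fixed function in $\mathcal{D}(\R^d)$. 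Undoing the scaling turns this into $u=\lambda^{-1}\sum_j\big(\lambda^d(\mathcal{F}^{-1}m_j)(\lambda\,\cdot)\big)\star\partial_ju$, and Young's inequality gives $\|u\|_{L^p}\le\lambda^{-1}\big(\sum_j\|\mathcal{F}^{-1}m_j\|_{L^1}\big)\|\nabla u\|_{L^p}$, which is the $k=1$ claim.

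I expect the only genuine difficulty to be bookkeeping: making sure that every polynomial-in-$k$ factor — those coming from differentiating $\zeta\mapsto(i\zeta)^\alpha$ in the $L^1$ bound for $g_\alpha$, the number of multi-indices of length $k$, and the combinatorial cost of iterating the reverse inequality — is absorbed into a single exponential constant $C^{k+1}$ (using, for instance, $k^N\le C(N)e^{Nk}$ for each fixed $N$). One then takes $C$ to be the maximum of the finitely many constants that appeared, all of which depend only on $d$, $r$ and $R$, and the statement follows.
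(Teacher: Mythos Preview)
The paper does not actually prove this lemma: it is stated in the appendix as a classical tool, with the blanket reference to Chapters~2 and~3 of \cite{BCD} for all details. Your argument is correct and is essentially the standard proof one finds there --- fix the scale by dilation, write the relevant Fourier multipliers as convolutions against fixed Schwartz kernels, and apply Young's inequality; for the reverse annulus estimate, invert the gradient on the Fourier side using a cut-off supported away from the origin and iterate. The only care needed, which you already flag, is absorbing the various polynomial-in-$k$ factors (from derivatives of $\zeta\mapsto(i\zeta)^\alpha$, from counting multi-indices, and from the iteration) into a single exponential constant $C^{k+1}$; this is routine.
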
   

By use of Littlewood-Paley decomposition, we can define the class of Besov spaces.
\begin{defi} \label{d:B}
  Let $s\in\R$ and $1\leq p,r\leq+\infty$. The \emph{non-homogeneous Besov space}
$B^{s}_{p,r}$ is defined as the subset of tempered distributions $u$ for which
$$
\|u\|_{B^{s}_{p,r}}\,:=\,
\left\|\left(2^{js}\,\|\Delta_ju\|_{L^p}\right)_{j\geq -1}\right\|_{\ell^r}\,<\,+\infty\,.
$$
\end{defi}
Besov spaces are interpolation spaces between Sobolev spaces. In fact, for any $k\in\N$ and~$p\in[1,+\infty]$
we have the following chain of continuous embeddings: $ B^k_{p,1}\hookrightarrow W^{k,p}\hookrightarrow B^k_{p,\infty}$,
where  $W^{k,p}$ denotes the classical Sobolev space of $L^p$ functions with all the derivatives up to the order $k$ in $L^p$.
When $1<p<+\infty$, we can refine the previous result (this is the non-homogeneous version of Theorems 2.40 and 2.41 in \cite{BCD}): we have
$$
 B^k_{p, \min (p, 2)}\hookrightarrow W^{k,p}\hookrightarrow B^k_{p, \max(p, 2)}\,.
$$
In particular, for all $s\in\R$ we deduce the equivalence $B^s_{2,2}\equiv H^s$, with equivalence of norms.

As an immediate consequence of the first Bernstein inequality, one gets the following embedding result.
\begin{prop}\label{p:embed}
The continuous embedding $B^{s_1}_{p_1,r_1}\,\hookrightarrow\,B^{s_2}_{p_2,r_2}$ holds whenever $p_1\,\leq\,p_2$ and
$$
s_2\,<\,s_1-d\left(\frac{1}{p_1}-\frac{1}{p_2}\right)\,,\qquad\mbox{ or }\qquad
s_2\,=\,s_1-d\left(\frac{1}{p_1}-\frac{1}{p_2}\right)\;\;\mbox{ and }\;\;r_1\,\leq\,r_2\,. 
$$
\end{prop}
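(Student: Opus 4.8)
The plan is to prove the embedding directly from Definition \ref{d:B}, using only the first Bernstein inequality of Lemma \ref{l:bern} and elementary properties of the sequence spaces $\ell^r$.

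First I would reduce everything to a statement about individual dyadic blocks. Fix $u \in B^{s_1}_{p_1,r_1}$ and an index $j \ge -1$. The Fourier transform of $\Delta_j u$ is supported in a ball $B(0, C\,2^j)$ for some absolute constant $C$ (with the convention that $2^{-1}$ is used when $j=-1$, where $\Delta_{-1} = \chi(D)$ has Fourier support in $B(0,2)$). Hence the ``ball'' form of the first Bernstein inequality applies with $\lambda \sim 2^j$ and $k=0$, giving
\[
\|\Delta_j u\|_{L^{p_2}} \,\lesssim\, 2^{\,jd\left(\frac{1}{p_1}-\frac{1}{p_2}\right)}\,\|\Delta_j u\|_{L^{p_1}}.
\]
Multiplying by $2^{js_2}$ and setting $a_j := 2^{js_1}\|\Delta_j u\|_{L^{p_1}}$ and $\delta := s_1 - s_2 - d\big(\tfrac{1}{p_1}-\tfrac{1}{p_2}\big)$, this reads $2^{js_2}\|\Delta_j u\|_{L^{p_2}} \lesssim 2^{-j\delta}\,a_j$, where $\delta \ge 0$ by the hypothesis of the proposition. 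It then remains to bound $\big\|(2^{-j\delta}a_j)_{j\ge-1}\big\|_{\ell^{r_2}}$ by a constant times $\|(a_j)_{j\ge-1}\|_{\ell^{r_1}} = \|u\|_{B^{s_1}_{p_1,r_1}}$.

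This is where I would split into cases. If $\delta = 0$ (the borderline case), the hypothesis forces $r_1 \le r_2$, and then $\|(a_j)\|_{\ell^{r_2}} \le \|(a_j)\|_{\ell^{r_1}}$ by the elementary inclusion $\ell^{r_1} \hookrightarrow \ell^{r_2}$, closing the estimate. If $\delta > 0$, then $(2^{-j\delta})_{j\ge-1}$ lies in every $\ell^q$, $1\le q\le+\infty$: when $r_1 \le r_2$ one concludes again via $\ell^{r_1} \hookrightarrow \ell^{r_2}$ together with $\sup_j 2^{-j\delta} < +\infty$, and when $r_1 > r_2$ one applies H\"older's inequality with the exponent $q$ defined by $\tfrac{1}{r_2} = \tfrac{1}{q}+\tfrac{1}{r_1}$ (so that $q<+\infty$), using $(2^{-j\delta})_{j\ge-1} \in \ell^q$. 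In all cases this yields $\|u\|_{B^{s_2}_{p_2,r_2}} \lesssim \|u\|_{B^{s_1}_{p_1,r_1}}$, which is exactly the claimed continuous embedding. I do not expect any genuine difficulty: the only points needing a little care are handling the low-frequency block $j=-1$ (where one invokes the ball form, rather than the annulus form, of Bernstein's inequality) and the bookkeeping of the $\ell^r$-summation in the critical case $\delta = 0$, where the extra assumption $r_1 \le r_2$ is precisely what compensates for the absence of the decaying factor $2^{-j\delta}$.
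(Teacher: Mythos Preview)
Your proof is correct and is exactly the argument the paper has in mind: the proposition is stated without proof in the appendix, introduced simply as ``an immediate consequence of the first Bernstein inequality'', and your write-up makes precisely that immediate consequence explicit. The only comment is cosmetic: for $j=-1$ the exact scale $\lambda$ is irrelevant (any fixed constant works), so there is no need for a special convention there.
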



In particular, under the conditions 
\begin{equation}\label{eq:AnnLInfty}
s > \frac{d}{p}\,, \qquad\qquad \text{ or } \qquad\qquad s = \frac{d}{p} \quad \text{ and } \quad r = 1,
\end{equation}
on $(s, p, r) \in \mathbb{R} \times [1, +\infty]^2$, we get the following chain of embeddings:
\begin{equation*}
\B \hookrightarrow B^{s - \frac{d}{p}}_{p, r} \hookrightarrow B^0_{\infty, 1} \hookrightarrow L^\infty.
\end{equation*}

Now, let us make an important remark about the Leray projector $\P$.
\begin{rmk}\label{r:Leray}
The Leray projector $\mathbb{P} = I + \nabla (- \Delta)^{-1} \D$ is a Fourier multiplier whose symbol is a bounded rational fraction of order zero. By use of Calder\'on-Zygmund theory, it can be shown that $\P$ maps \emph{continuously} $\B$ into itself, as long as $1 < p < +\infty$ and for all $(s, r) \in \mathbb{R} \times [1, +\infty]$. 
%
\end{rmk}

The next proposition is a functional inequality which we used repeatedly in this article, the classical \emph{Gagliardo-Nirenberg inequality}. Its proof can be found \tsl{e.g.} in
Corollary 1.2 of \cite{CDGG}.

\begin{lemma}\label{GN}
Let $2 \leq p < +\infty$ such that $1/p > 1/2 - 1/d$. Then, for all $u \in H^1$, one has
\begin{equation*}
\| u \|_{L^p}\, \leq\, C(p)\, \| u \|_{L^2}^{1-\lambda}\, \| \nabla u \|_{L^2}^{\lambda}\,, \qquad\qquad \text{ with } \qquad \lambda = \frac{d(p-2)}{2p}\,.
\end{equation*}
In particular, in dimension $d = 2$, we have $\| u \|_{L^p}\, \leq\, \| u \|_{L^2}^{2/p}\, \| \nabla u \|_{L^2}^{1 - 2/p}$ for any $p\in[2,+\infty[\,$.
\end{lemma}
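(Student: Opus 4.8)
Set $\lambda := d\big(\tfrac12-\tfrac1p\big)$, and observe at once that the coincidence $d(p-2)/(2p)=d\big(\tfrac12-\tfrac1p\big)$ identifies this quantity with the one in the statement. The hypotheses $2\le p<+\infty$ and $\tfrac1p>\tfrac12-\tfrac1d$ are \emph{exactly} the requirements $\lambda\ge 0$ and $\lambda<1$; in particular $0\le\lambda<d/2$, so all the frequency estimates below are licit. Since the inequality is scaling invariant, the natural strategy is to combine the Littlewood--Paley decomposition, the Bernstein inequalities of Lemma \ref{l:bern}, and a preliminary normalization by dilation.

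First I would reduce to a normalized situation. If $\nabla u\equiv 0$ then $u\equiv 0$ (because $u\in L^2$) and there is nothing to prove, so assume $\|u\|_{L^2},\|\nabla u\|_{L^2}>0$. Both sides of the claimed estimate transform in the same way under $u\mapsto u(\mu\,\cdot)$, while $\|\nabla u(\mu\,\cdot)\|_{L^2}/\|u(\mu\,\cdot)\|_{L^2}=\mu\,\|\nabla u\|_{L^2}/\|u\|_{L^2}$; choosing $\mu$ appropriately, we may therefore assume $\|u\|_{L^2}=\|\nabla u\|_{L^2}=:A$, and it suffices to show $\|u\|_{L^p}\le C(p)\,A$. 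Now decompose $u=\sum_{j\ge-1}\Delta_j u$ and use $\|u\|_{L^p}\le\sum_{j\ge-1}\|\Delta_j u\|_{L^p}$ (the finiteness of the right-hand side, proved below, legitimizes this through convergence of the series in $L^p$ and in $\mathcal{S}'$). By the first Bernstein inequality applied with the couple $(2,p)$, $\|\Delta_j u\|_{L^p}\lesssim 2^{j\lambda}\|\Delta_j u\|_{L^2}$ for every $j\ge-1$. For the low blocks $-1\le j\le 0$ I bound crudely $\|\Delta_j u\|_{L^2}\lesssim\|u\|_{L^2}=A$, whereas for $j\ge1$ the block $\Delta_j u$ is frequency-localized in a dyadic annulus, so the second Bernstein inequality gives $\|\Delta_j u\|_{L^2}\lesssim 2^{-j}\|\nabla\Delta_j u\|_{L^2}\lesssim 2^{-j}\|\nabla u\|_{L^2}=2^{-j}A$. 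Summing,
\[
\|u\|_{L^p}\;\lesssim\; A\sum_{-1\le j\le 0}2^{j\lambda}\;+\;A\sum_{j\ge1}2^{j(\lambda-1)}\;\lesssim\; A,
\]
the last series converging precisely because $\lambda<1$. Undoing the dilation yields $\|u\|_{L^p}\le C(p)\,\|u\|_{L^2}^{1-\lambda}\|\nabla u\|_{L^2}^{\lambda}$; specializing to $d=2$ gives $\lambda=1-2/p$ and $1-\lambda=2/p$, which is the stated two-dimensional form.

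\textbf{Expected main difficulty.} There is no genuine obstacle; the one place needing care is the treatment of the low frequencies, since in a homogeneous inequality one cannot afford to pay a factor $\|u\|_{L^2}$ on the block $\Delta_{-1}u$ without compensation — and this is exactly what the preliminary rescaling (equivalently, an optimization over the dyadic threshold at which one switches from the $\|u\|_{L^2}$-bound to the $\|\nabla u\|_{L^2}$-bound) neutralizes. If one insists on the \emph{sharp} constant (i.e.\ $C(p)=1$ in the two-dimensional statement, rather than merely some $C(p)$), this does not follow from the argument above and requires instead the classical one-dimensional-slicing estimate of Ladyzhenskaya/Gagliardo--Nirenberg type, for which we refer to \cite{CDGG}.
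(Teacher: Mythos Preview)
Your proof is correct. The paper itself does not supply a proof of this lemma at all: it simply refers the reader to Corollary 1.2 of \cite{CDGG}. Your argument is therefore a genuine addition rather than a paraphrase --- a self-contained derivation using the Littlewood--Paley decomposition and the Bernstein inequalities already stated in the appendix, with the scaling normalization neatly replacing the usual optimization over a frequency threshold. The only point worth flagging is the one you already raise yourself: the paper's two-dimensional statement is written without an explicit constant, and your method produces some $C(p)$ rather than $C(p)=1$; since every use of the lemma in the body of the paper is insensitive to the value of the constant, this is harmless.
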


\subsection{Non-homogeneous paradifferential calculus}\label{s:NHPC}

Let us now introduce the paraproduct operator (after J.-M. Bony, see \cite{Bony}). Constructing the paraproduct operator relies on the observation that, 
formally, any product  of two tempered distributions $u$ and $v,$ may be decomposed into 
\begin{equation}\label{eq:bony}
u\,v\;=\;\mathcal{T}_u(v)\,+\,\mathcal{T}_v(u)\,+\,\mathcal{R}(u,v)\,,
\end{equation}
where we have defined
$$
\mathcal{T}_u(v)\,:=\,\sum_jS_{j-1}u\Delta_j v,\qquad\qquad\mbox{ and }\qquad\qquad
\mathcal{R}(u,v)\,:=\,\sum_j\sum_{|j'-j|\leq1}\Delta_j u\,\Delta_{j'}v\,.
$$
The above operator $\mc T$ is called ``paraproduct'' whereas
$\mc R$ is called ``remainder''.
The paraproduct and remainder operators have many nice continuity properties. 
The following ones have been of constant use in this paper. 
\begin{prop}\label{p:op}
For any $(s,p,r)\in\R\times[1,+\infty]^2$ and $t>0$, the paraproduct operator 
$\mathcal{T}$ maps continuously $L^\infty\times B^s_{p,r}$ in $B^s_{p,r}$ and  $B^{-t}_{\infty,\infty}\times B^s_{p,r}$ in $B^{s-t}_{p,r}$.
Moreover, the following estimates hold:
$$
\|\mathcal{T}_u(v)\|_{B^s_{p,r}}\,\leq\, C\,\|u\|_{L^\infty}\,\|\nabla v\|_{B^{s-1}_{p,r}}\qquad\mbox{ and }\qquad
\|\mathcal{T}_u(v)\|_{B^{s-t}_{p,r}}\,\leq\, C\|u\|_{B^{-t}_{\infty,\infty}}\,\|\nabla v\|_{B^{s-1}_{p,r}}\,.
$$
For any $(s_1,p_1,r_1)$ and $(s_2,p_2,r_2)$ in $\R\times[1,+\infty]^2$ such that 
$s_1+s_2>0$, $1/p:=1/p_1+1/p_2\leq1$ and~$1/r:=1/r_1+1/r_2\leq1$,
the remainder operator $\mathcal{R}$ maps continuously~$B^{s_1}_{p_1,r_1}\times B^{s_2}_{p_2,r_2}$ into~$B^{s_1+s_2}_{p,r}$.
In the case $s_1+s_2=0$, provided $r=1$, operator $\mathcal{R}$ is continuous from $B^{s_1}_{p_1,r_1}\times B^{s_2}_{p_2,r_2}$ with values
in $B^{0}_{p,\infty}$.
\end{prop}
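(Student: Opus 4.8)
The plan is to prove Proposition~\ref{p:op} by the classical Littlewood--Paley method, the only two external ingredients being the standard ``quasi-orthogonality'' lemmas: \textbf{(a)} if $(u_j)_j$ satisfies $\Supp\widehat{u_j}\subset 2^j\mc C$ for a fixed annulus $\mc C$, then $\big\|\sum_j u_j\big\|_{B^\sigma_{p,r}}\lesssim\big\|(2^{j\sigma}\|u_j\|_{L^p})_j\big\|_{\ell^r}$ for \emph{every} $\sigma\in\R$; and \textbf{(b)} if instead $\Supp\widehat{u_j}\subset 2^j B(0,C)$, the same conclusion holds \emph{provided} $\sigma>0$. The paraproduct terms will be handled by (a) and the remainder terms by (b); the sign restriction $s_1+s_2>0$ in the statement is exactly what makes (b) applicable, and the endpoint $s_1+s_2=0$ will be treated by hand.

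For the paraproduct, I would first note that $S_{j-1}u\,\Delta_j v$ vanishes for $j\le 0$ and has spectrum in an annulus $\{|\xi|\sim 2^j\}$ for $j\ge 1$, so that $\mc T_u(v)=\sum_{j\ge1}S_{j-1}u\,\Delta_j v$ fits the setting of (a). By Hölder, $\|S_{j-1}u\,\Delta_j v\|_{L^p}\le\|S_{j-1}u\|_{L^\infty}\|\Delta_j v\|_{L^p}$. In the first case one uses that $S_{j-1}$ is a convolution operator with an $L^1$-normalised kernel, hence $\|S_{j-1}u\|_{L^\infty}\le C\|u\|_{L^\infty}$, and (a) with $\sigma=s$ gives the bound in $L^\infty\times B^s_{p,r}$; replacing $\|\Delta_j v\|_{L^p}$ by $\lesssim 2^{-j}\|\Delta_j\nabla v\|_{L^p}$ via Bernstein (legitimate since $j\ge1$) yields the estimate with $\nabla v$. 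In the second case one estimates $\|S_{j-1}u\|_{L^\infty}\le\sum_{k\le j-2}\|\Delta_k u\|_{L^\infty}\le\|u\|_{B^{-t}_{\infty,\infty}}\sum_{k\le j-2}2^{kt}\lesssim 2^{jt}\|u\|_{B^{-t}_{\infty,\infty}}$, the geometric sum being dominated by its largest term precisely because $t>0$; then (a) with $\sigma=s-t$ concludes, again with the Bernstein trick for the $\nabla v$ version.

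For the remainder, I would group $\mc R(u,v)=\sum_j\Delta_j u\,\widetilde\Delta_j v$ with $\widetilde\Delta_j:=\Delta_{j-1}+\Delta_j+\Delta_{j+1}$; each summand has spectrum in a ball $\{|\xi|\le C2^j\}$, so (b) applies as soon as $\sigma:=s_1+s_2>0$. By Hölder in $x$ with $1/p=1/p_1+1/p_2$,
\[
2^{j\sigma}\|\Delta_j u\,\widetilde\Delta_j v\|_{L^p}\le\big(2^{js_1}\|\Delta_j u\|_{L^{p_1}}\big)\big(2^{js_2}\|\widetilde\Delta_j v\|_{L^{p_2}}\big)=:a_j\,b_j,
\]
with $(a_j)\in\ell^{r_1}$ of norm $\lesssim\|u\|_{B^{s_1}_{p_1,r_1}}$ and $(b_j)\in\ell^{r_2}$ of norm $\lesssim\|v\|_{B^{s_2}_{p_2,r_2}}$; discrete Hölder with $1/r=1/r_1+1/r_2$ then gives $(a_jb_j)\in\ell^r$ with the claimed bound, and (b) finishes. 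For the endpoint $s_1+s_2=0$, $r=1$, where (b) is unavailable, I would estimate $\Delta_k\mc R(u,v)$ directly: since $\Delta_k(\Delta_j u\,\widetilde\Delta_j v)=0$ unless $j\ge k-N_0$ for a fixed $N_0$, one gets $\|\Delta_k\mc R(u,v)\|_{L^p}\le\sum_{j\ge k-N_0}\|\Delta_j u\|_{L^{p_1}}\|\widetilde\Delta_j v\|_{L^{p_2}}\le\sum_j a_j b_j\le\|u\|_{B^{s_1}_{p_1,r_1}}\|v\|_{B^{s_2}_{p_2,r_2}}$ by discrete Hölder with $1/r_1+1/r_2=1$, and the supremum over $k$ of the left-hand side is exactly the $B^0_{p,\infty}$ norm.

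None of this is genuinely hard: the only points requiring a little care are recognising which quasi-orthogonality lemma applies to which piece (annulus versus ball), the resulting necessity of $s_1+s_2>0$ for the remainder, the convergence of the series $\sum 2^{kt}$ which forces $t>0$ in the second paraproduct bound, and the ad hoc treatment of the limiting case $s_1+s_2=0$. For a fully detailed argument one may also simply refer to \cite{BCD}.
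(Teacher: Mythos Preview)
Your sketch is correct and is essentially the standard Littlewood--Paley proof of these paraproduct/remainder estimates as given in \cite{BCD}, Theorems~2.82 and~2.85. Note, however, that the paper does not actually prove Proposition~\ref{p:op}: it is stated in the appendix as a reference result, with the blanket citation ``We refer to Chapters~2 and~3 of \cite{BCD} for details'' covering all the tools collected there, so there is no proof in the paper to compare against beyond that reference.
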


The consequence of this proposition is that the spaces $\B$ are Banach algebras as long as the condition \eqref{eq:AnnLInfty} holds with $s > 0$. Moreover, in that case, we have the so-called \textit{tame estimates}.

\begin{cor}\label{c:tame}
Let $(s, p, r)$ be as in \eqref{eq:AnnLInfty} with the extra assumption that $s > 0$. Then, we have
\begin{equation*}
\forall f, g \in \B, \qquad \| fg \|_{\B} \lesssim \| f \|_{L^\infty} \|g\|_{\B} + \| f \|_{\B} \| g \|_{L^\infty}.
\end{equation*}
\end{cor}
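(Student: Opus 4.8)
The natural approach is to exploit the Bony decomposition \eqref{eq:bony}, namely $fg = \mathcal{T}_f(g) + \mathcal{T}_g(f) + \mathcal{R}(f,g)$, and to estimate each of the three pieces separately by means of the continuity properties collected in Proposition \ref{p:op}, together with the embedding $L^\infty \hookrightarrow B^0_{\infty,\infty}$. First I would handle the two paraproduct terms: by the first mapping property in Proposition \ref{p:op}, the operator $\mathcal{T}$ is bounded from $L^\infty \times \B$ into $\B$, so that
\[
\big\| \mathcal{T}_f(g) \big\|_{\B} \lesssim \|f\|_{L^\infty}\, \|g\|_{\B}
\qquad\text{and}\qquad
\big\| \mathcal{T}_g(f) \big\|_{\B} \lesssim \|g\|_{L^\infty}\, \|f\|_{\B}\,,
\]
which already produces both terms appearing in the right-hand side of the claimed inequality; note that here no restriction beyond $(s,p,r)\in\R\times[1,+\infty]^2$ is needed.

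Next I would deal with the remainder $\mathcal{R}(f,g)$, which is where the extra hypothesis $s>0$ comes into play. Applying the remainder part of Proposition \ref{p:op} with the choice of indices $(s_1,p_1,r_1) = (s,p,r)$ and $(s_2,p_2,r_2) = (0,\infty,\infty)$, one checks that the conditions $s_1+s_2 = s > 0$, $1/p_1 + 1/p_2 = 1/p \leq 1$ and $1/r_1 + 1/r_2 = 1/r \leq 1$ are all satisfied, so that $\mathcal{R}$ maps $\B \times B^0_{\infty,\infty}$ continuously into $\B$. Using then $L^\infty \hookrightarrow B^0_{\infty,\infty}$ and the symmetry $\mathcal{R}(f,g) = \mathcal{R}(g,f)$, I obtain
\[
\big\| \mathcal{R}(f,g) \big\|_{\B} \lesssim \|f\|_{\B}\, \|g\|_{B^0_{\infty,\infty}} \lesssim \|f\|_{\B}\, \|g\|_{L^\infty}\,,
\]
and equally well the bound with $f$ and $g$ exchanged. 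Summing the three contributions gives precisely $\|fg\|_{\B} \lesssim \|f\|_{L^\infty}\|g\|_{\B} + \|f\|_{\B}\|g\|_{L^\infty}$, as desired.

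The argument is entirely a bookkeeping exercise once Proposition \ref{p:op} is available, so there is no genuine obstacle; the only point requiring a little care is the remainder estimate, where one must verify that the index constraint $s_1+s_2>0$ is met — this is exactly why the corollary carries the additional assumption $s>0$ on top of the condition \eqref{eq:AnnLInfty} ensuring $\B \hookrightarrow L^\infty$. (If one only had $s_1+s_2 = 0$, Proposition \ref{p:op} would merely give $\mathcal{R}(f,g) \in B^0_{p,\infty}$ under the extra requirement $r=1$, which would not suffice to close the estimate in $\B$.)
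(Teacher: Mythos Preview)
Your proof is correct and follows exactly the approach the paper has in mind: the corollary is stated without proof as an immediate consequence of Proposition \ref{p:op}, and your argument via the Bony decomposition with the choice $(s_1,p_1,r_1)=(s,p,r)$, $(s_2,p_2,r_2)=(0,\infty,\infty)$ for the remainder is precisely the standard derivation. Your remark on why $s>0$ is needed (to ensure $s_1+s_2>0$ in the remainder estimate) is also spot on.
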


\begin{rmk}
The space $B^0_{\infty, 1}$ is not an algebra. If $f, g \in B^0_{\infty, 1}$, one can use Proposition \ref{p:op} to bound the paraproducts $T_f(g)$ and $T_g(f)$, but not the remainder $\mathcal{R}(f, g)$, because the sum of the regularities of $f$ and $g$ is zero.
\end{rmk}

\subsection{Transport equations and commutator estimates}

In this section, we focus on the transport. We refer again to Chapters 2 and 3 of \cite{BCD} for additional details. We study the initial value problem
\begin{equation}\label{eq:TV}
\begin{cases}
\partial_t f + v \cdot \nabla f = g \\
f_{|t = 0} = f_0\,.
\end{cases}
\end{equation}
The velocity field $v=v(t,x)$ will always assumed to be divergence-free, \tsl{i.e.} $\D(v) = 0$, and Lipschitz. It is therefore practical to make the following definition: the triplet $(s, p, r) \in \mathbb{R} \times [1, +\infty]^2$ will be said to satisfy the \emph{Lipschitz condition} if 
condition \eqref{i_eq:Lip} holds. Notice that this implies the embedding $\B \hookrightarrow W^{1, \infty}$.

Finding \textsl{a priori} estimates for problem \eqref{eq:TV} in Besov spaces requires to look at the dyadic blocks. Let $j \geq -1$. Applying $\Delta_j$ to the transport equation yields
\begin{equation*}
\partial_t \Delta_j f + (v \cdot \nabla) \Delta_j f = \big[ v \cdot \nabla, \Delta_j \big] f + \Delta_j g,
\end{equation*}
where $\big[ v \cdot \nabla, \Delta_j \big] f$ is the commutator $(v \cdot \nabla) \Delta_j - \Delta_j (v \cdot \nabla)$. The following estimate is of recurring use in this article (see Lemma 2.100 and Remark 2.101 in \cite{BCD}). 

\begin{lemma}\label{l:CommBCD}
Assume that $v \in \B$, with $(s, p, r)$ satisfying the Lipschitz condition \eqref{i_eq:Lip}. Then
\begin{equation*}
\forall f \in \B, \qquad  2^{js} \left\| \big[ v \cdot \nabla, \Delta_j \big] f  \right\|_{L^p} \lesssim c_j \Big( \|\nabla v \|_{L^\infty} \| f \|_{\B} + \|\nabla v \|_{B^{s-1}_{p, r}} \|\nabla f \|_{L^\infty} \Big),
\end{equation*}
for some sequence $\big(c_j\big)_{j\geq -1}$ in the unit ball of $\ell^r$. 
\end{lemma}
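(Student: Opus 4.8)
The plan is to establish the estimate by the classical Bony-decomposition argument (this is, in essence, Lemma 2.100 together with Remark 2.101 of \cite{BCD}). First I would write $v\cdot\nabla f=\sum_k v^k\,\d_k f$ and expand, \emph{via} the paraproduct formula \eqref{eq:bony}, both $v^k\,\d_k f$ and $v^k\,\d_k\Delta_j f$; subtracting, the commutator $[v\cdot\nabla,\Delta_j]f$ decomposes (with an implicit summation over $k$) into five pieces: a paraproduct commutator $[\Delta_j,\mc T_{v^k}]\d_k f$, two ``reverse'' paraproducts $\mc T_{\d_k\Delta_j f}v^k$ and $\Delta_j\mc T_{\d_k f}v^k$, and two remainders $\mc R(v^k,\d_k\Delta_j f)$ and $\Delta_j\mc R(v^k,\d_k f)$. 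Each piece would then be estimated separately and the bounds collected by taking the $\ell^r$ norm in $j$, the sequence $(c_j)_j$ arising from the $\ell^r$-summability of $\big(2^{js}\|\Delta_j f\|_{L^p}\big)_j$ and of $\big(2^{(s-1)j}\|\Delta_j\nabla v\|_{L^p}\big)_j$.

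For the paraproduct commutator I would use that $[\Delta_j,\mc T_{v^k}]\d_k f=\sum_{|l-j|\le N}[\Delta_j,S_{l-1}v^k]\Delta_l\d_k f$ and, writing $\Delta_j$ (for $j\ge0$) as convolution against a rescaled Schwartz kernel $2^{dj}g(2^j\cdot)$, perform a first-order Taylor expansion of $S_{l-1}v^k$: since $\|\nabla S_{l-1}v^k\|_{L^\infty}\le\|\nabla v\|_{L^\infty}$, this gains a factor $2^{-j}$ which is absorbed by the derivative $\d_k$ falling on $f$, leading after the $2^{js}$ weighting and the finite sum over $l\sim j$ to a bound by $c_j\,\|\nabla v\|_{L^\infty}\,\|f\|_{\B}$. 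For the two reverse paraproducts, the point is that the spectral localisation of $\Delta_j$ (together with that of the paraproduct) forces the argument $v^k$ to contribute only frequencies $\gtrsim 2^j$, so a Bernstein inequality converts a factor $2^{-l}$ into a gradient on $v$, while the low-frequency factor is controlled by $\|\d_k\Delta_j f\|_{L^\infty}$ or $\|S_{l-1}\d_k f\|_{L^\infty}$, both $\lesssim\|\nabla f\|_{L^\infty}$; the condition $s>0$ (guaranteed by \eqref{i_eq:Lip}) makes the resulting sum over $l$ a convergent $\ell^1\!\ast\ell^r$ convolution, giving $c_j\,\|\nabla v\|_{B^{s-1}_{p,r}}\,\|\nabla f\|_{L^\infty}$. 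The two remainder terms are treated in the same spirit: $\Delta_j\mc R(v^k,\d_k f)$ only involves blocks $\Delta_m v^k$ with $m\gtrsim j$, so another Bernstein inequality on those blocks brings out $\|\nabla v\|_{L^\infty}$ (up to the harmless lowest-frequency block), and $s>0$ makes the geometric tail summable, contributing to the $\|\nabla v\|_{L^\infty}\|f\|_{\B}$ term; $\mc R(v^k,\d_k\Delta_j f)$ is handled likewise, using that $\d_k\Delta_j f$ is spectrally supported in an annulus of size $2^j$.

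The main obstacle is the paraproduct-commutator term: there one must genuinely exploit the commutator structure so that only a single derivative lands on $f$ — a crude product estimate would cost an extra full derivative on $v$ and ruin the bound — and this is precisely what the Taylor expansion of the kernel of $\Delta_j$ is designed to achieve. The remaining bookkeeping, in particular producing $\|\nabla v\|_{B^{s-1}_{p,r}}$ rather than $\|v\|_{B^s_{p,r}}$ and handling the lowest-frequency blocks in the non-homogeneous setting, is routine but should be carried out with some care.
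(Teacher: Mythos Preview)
Your proposal is correct and is precisely the argument of Lemma~2.100 and Remark~2.101 in \cite{BCD}, which is all the paper invokes: the lemma is stated in the appendix without proof, with an explicit reference to those results. Your decomposition into the five pieces, the Taylor expansion of the convolution kernel for the paraproduct commutator, and the Bernstein-based handling of the reverse paraproducts and remainders match the cited source exactly.
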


We also require another commutator lemma (this is Lemma 2.99 in \cite{BCD}).

\begin{lemma}\label{l:ParaComm}
Let $\k$ be a smooth function on $\mathbb{R}^d$, which is homogeneous of degree $m$ away from a neighbourhood of $0$. Then, for a vector field $v$ such that $\nabla v \in L^\infty$, one has:
\begin{equation*}
\forall f \in \B, \qquad \left\| \big[ \mathcal{T}_v, \k(D) \big] f \right\|_{B^{s-m+1}_{p, r}} \lesssim \|\nabla v\|_{L^\infty} \|f\|_{\B}.
\end{equation*}
\end{lemma}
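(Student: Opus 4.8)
The plan is to localise the commutator dyadically and reduce everything to a first-order Taylor estimate, exploiting the frequency gap built into the paraproduct. Recalling $\mc T_v f=\sum_{j\ge1}S_{j-1}v\,\Delta_jf$ and that the Fourier multiplier $\k(D)$ commutes with each $\Delta_j$, I would write
\begin{equation*}
\big[\mc T_v,\k(D)\big]f=\sum_{j\ge1}u_j\,,\qquad u_j:=S_{j-1}v\,\k(D)\Delta_jf-\k(D)\big(S_{j-1}v\,\Delta_jf\big)\,.
\end{equation*}
For each $j\ge1$ the product $S_{j-1}v\,\Delta_jf$ has spectrum contained in a fixed dyadic annulus $2^j\mc C$ (a ball of radius $\lesssim2^{j-1}$ added to an annulus of size $\sim2^j$), and $\k(D)$ preserves spectral supports, so $\Supp\widehat{u_j}\subset 2^j\mc C$. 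By the characterisation of Besov regularity for functions with such dyadically-annular spectra (Chapter 2 of \cite{BCD}), valid for all $s\in\R$, together with $\|f\|_{\B}\simeq\big\|(2^{js}\|\Delta_jf\|_{L^p})_{j}\big\|_{\ell^r}$, it is enough to prove the pointwise-in-$j$ bound
\begin{equation*}
\|u_j\|_{L^p}\,\lesssim\,2^{j(m-1)}\,\|\nabla v\|_{L^\infty}\,\|\Delta_jf\|_{L^p}\,.
\end{equation*}

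For the bulk of the indices — those $j\ge J_0$ for which $2^j\mc C$ lies inside the region $\{|\xi|\ge\rho_0\}$ where $\k$ is homogeneous of degree $m$ — I would fix a cutoff $\theta\in C^\infty_c(\R^d\setminus\{0\})$ equal to $1$ on a neighbourhood of $\mc C$ (so that $\theta(2^{-j}D)$ is the identity on $\Delta_jf$ and on $S_{j-1}v\,\Delta_jf$), and use homogeneity to rewrite the symbol $\k(\xi)\theta(2^{-j}\xi)=2^{jm}\Psi(2^{-j}\xi)$ with $\Psi:=\k\,\theta\in C^\infty_c(\R^d\setminus\{0\})$. Setting $g:=\mathcal{F}^{-1}\Psi\in\mathcal{S}(\R^d)$ and $g_j:=2^{dj}g(2^j\cdot)$, one gets $\k(D)\Delta_jh=2^{jm}\,g_j*h$, whence
\begin{equation*}
u_j(x)=2^{jm}\int_{\R^d}g_j(y)\,\big(S_{j-1}v(x)-S_{j-1}v(x-y)\big)\,\Delta_jf(x-y)\,{\rm d}y\,.
\end{equation*}
The mean value theorem, together with $\nabla S_{j-1}v=S_{j-1}\nabla v$ and the uniform $L^\infty\!\to\!L^\infty$ bound on the operators $S_k$, gives $|S_{j-1}v(x)-S_{j-1}v(x-y)|\leq C|y|\,\|\nabla v\|_{L^\infty}$, and Young's inequality then yields $\|u_j\|_{L^p}\leq C\,2^{jm}\|\nabla v\|_{L^\infty}\,\big\||\cdot|\,g_j\big\|_{L^1}\|\Delta_jf\|_{L^p}$; since $\big\||\cdot|\,g_j\big\|_{L^1}=2^{-j}\big\||\cdot|\,g\big\|_{L^1}<+\infty$ because $g\in\mathcal{S}$, this is exactly the required bound.

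For the finitely many remaining indices $1\leq j<J_0$, where $2^j\mc C$ may still meet the region in which $\k$ is only smooth rather than homogeneous, I would argue crudely: there $\k(D)\Delta_j$ is a fixed convolution operator with Schwartz kernel, and the same Taylor expansion gives $\|u_j\|_{L^p}\lesssim\|\nabla v\|_{L^\infty}\|\Delta_jf\|_{L^p}$, which is absorbed into the claimed estimate since $j$ runs over a bounded set. Summing the dyadic bound in $\ell^r$ then produces $\big\|[\mc T_v,\k(D)]f\big\|_{B^{s-m+1}_{p,r}}\lesssim\|\nabla v\|_{L^\infty}\|f\|_{\B}$. I expect the only genuinely delicate point to be the frequency-rescaling step resting on the homogeneity of $\k$, together with the attendant need to peel off the low block indices; the rest is routine bookkeeping with Young's inequality and a first-order Taylor expansion.
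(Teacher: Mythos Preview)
Your proof is correct and is precisely the argument behind Lemma~2.99 of \cite{BCD}, which the paper merely cites without reproducing: dyadic localisation of the commutator, the homogeneity-based rescaling $\k(\xi)\theta(2^{-j}\xi)=2^{jm}\Psi(2^{-j}\xi)$ to get a Schwartz kernel, and a first-order Taylor expansion on $S_{j-1}v$ combined with Young's inequality. The separate treatment of the finitely many low-frequency blocks where homogeneity may fail is also exactly how one handles this point in \cite{BCD}.
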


\begin{rmk}
From the proof of this lemma in \cite{BCD}, it appears that the result holds if $\k$ is a bounded homogeneous function of degree zero, as it does not blow up at $\xi = 0$. In particular, if $\k(D) = \mathbb{P}$ is the Leray projector, then we have
\begin{equation*}
\forall f \in \B, \qquad \left\| \big[ \mathcal{T}_v, \mathbb{P} \big] f \right\|_{B^{s+1}_{p, r}} \lesssim \|\nabla v\|_{L^\infty} \|f\|_{\B}.
\end{equation*}
\end{rmk}

All this results in a well-posedness theorem for problem \eqref{eq:TV} in general Besov spaces (see Theorem 3.19 in \cite{BCD}).

\begin{thm}\label{th:transport}
Let $(s, p, r) \in \mathbb{R} \times [1, +\infty]^2$ satisfy the Lipschitz condition \eqref{i_eq:Lip}. Given some $T>0$, let $g \in L^1_T(\B)$ and $v \in L^1_T(\B)$ such that there exist real numbers $q > 1$ and $M > 0$ for which $v \in L^q_T(B^{-M}_{\infty, \infty})$. Finally, let $f_0 \in \B$ be an initial datum. Then, the transport equation \eqref{eq:TV} has a unique solution $f$ in 
\begin{itemize}
\item the space $C^0_T(\B)$, if $r < +\infty$;
\item the space $\left( \bigcap_{s'<s} C^0_T(B^{s'}_{p, \infty}) \right) \cap C^0_{w, T}(B^s_{p, \infty})$, if $r = +\infty$.
\end{itemize}
Moreover, this unique solution satisfies the following estimate:
\begin{equation*} 
\| f \|_{L^\infty_T(\B)} \leq \exp \left( C \int_0^T \| \nabla v \|_{B^{s-1}_{p, r}} \right)
\left\{ \| f_0 \|_{\B} + \int_0^T \exp \left( - C \int_0^\tau \| \nabla v \|_{B^{s-1}_{p, r}} \right) \| g(\tau) \|_{\B} {\rm d} \tau  \right\},
\end{equation*}
for some constant $C = C(d, p, r, s)>0$.
\end{thm}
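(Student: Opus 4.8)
The plan is to follow the classical three-step scheme for linear transport equations in Besov spaces. \emph{First}, I would establish the \textsl{a priori} estimate for smooth solutions. Localising \eqref{eq:TV} in frequency, apply $\Delta_j$ to obtain
\[
\partial_t \Delta_j f + (v\cdot\nabla)\Delta_j f \,=\, \big[v\cdot\nabla,\Delta_j\big]f + \Delta_j g .
\]
Since $v$ is divergence-free (so the transport is measure-preserving) and Lipschitz, the elementary $L^p$ estimate obtained by integrating along the flow of $v$ gives
\[
\|\Delta_j f(t)\|_{L^p} \,\le\, \|\Delta_j f_0\|_{L^p} + \int_0^t \Big( \big\|\big[v\cdot\nabla,\Delta_j\big]f\big\|_{L^p} + \|\Delta_j g\|_{L^p} \Big)\,{\rm d}\tau .
\]
Multiplying by $2^{js}$, using the commutator bound of Lemma \ref{l:CommBCD} (which, under the Lipschitz condition \eqref{i_eq:Lip}, reads $2^{js}\|[v\cdot\nabla,\Delta_j]f\|_{L^p}\lesssim c_j\,\|\nabla v\|_{B^{s-1}_{p,r}}\,\|f\|_{\B}$ for some $(c_j)_j$ in the unit ball of $\ell^r$), then taking the $\ell^r$ norm in $j$ and slipping it inside the time integral by Minkowski's inequality, one reaches
\[
\|f(t)\|_{\B} \,\le\, \|f_0\|_{\B} + \int_0^t \Big( C\,\|\nabla v(\tau)\|_{B^{s-1}_{p,r}}\,\|f(\tau)\|_{\B} + \|g(\tau)\|_{\B} \Big)\,{\rm d}\tau ,
\]
and Grönwall's lemma yields precisely the stated exponential bound.

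\emph{Second}, I would prove existence by regularisation. Set $v_n := S_n v$, $g_n := S_n g$ and $f_{0,n} := S_n f_0$ (or mollify in the space variable); each $v_n$ is smooth and globally Lipschitz in $x$ and integrable in time, so the flow of $v_n$ is well defined and the method of characteristics produces a smooth solution $f_n$ of the regularised problem. Step 1 bounds $(f_n)_n$ uniformly in $L^\infty_T(\B)$. Writing the equation satisfied by $f_n-f_m$ --- transport by $v_n$ with source $(v_m-v_n)\cdot\nabla f_m+(g_n-g_m)$ --- and running the same estimate one regularity level lower, namely in $B^{s-1}_{p,r}$, absorbs the derivative loss and shows that $(f_n)_n$ is Cauchy in $C^0_T(B^{s-1}_{p,r})$; call $f$ its limit. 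The uniform $L^\infty_T(\B)$ bound combined with interpolation upgrades the convergence to $C^0_T(B^{s'}_{p,r})$ for every $s'<s$, the Fatou property of Besov norms gives $f\in L^\infty_T(\B)$ with the estimate of Step 1, and passing to the limit in the weak formulation shows that $f$ solves \eqref{eq:TV}. The auxiliary hypothesis $v\in L^q_T(B^{-M}_{\infty,\infty})$ with $q>1$ is what guarantees, in the endpoint case $r=+\infty$, that the limit is weakly-$*$ continuous with values in $B^s_{p,\infty}$ and is correctly identified.

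\emph{Third}, uniqueness and time continuity. The difference of two solutions with the same data solves the homogeneous transport equation driven by the Lipschitz field $v$, so the $L^p$ (or $B^{s-1}_{p,r}$) transport estimate forces it to vanish. For time continuity when $r<+\infty$, continuity of each $t\mapsto\Delta_j f(t)$ (read off from the localised equation) together with uniform smallness of the tail $\big(2^{js}\|\Delta_j f(t)\|_{L^p}\big)_{j\ge N}$ in $\ell^r$ --- a consequence of the uniform $L^\infty_T(\B)$ bound --- upgrades to $f\in C^0_T(\B)$; when $r=+\infty$ only weak-$*$ continuity survives, whence the $C^0_{w,T}(B^s_{p,\infty})$ statement. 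The step I expect to be the main obstacle is the \emph{endpoint} regularity $s=1+d/p$ with $r=1$ (and, symmetrically, $r=+\infty$): there is no room to trade regularity for a gain, so the commutator estimate must be used in its sharp form and the limiting/continuity arguments require care --- this is exactly where the precise statement of Lemma \ref{l:CommBCD} under the full Lipschitz condition \eqref{i_eq:Lip}, the algebra and embedding properties of $B^{d/p+1}_{p,1}$, and the auxiliary hypothesis on $v$ all do their work.
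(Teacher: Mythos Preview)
Your proposal is correct and follows the standard three-step scheme (a priori estimate via frequency localisation and Lemma \ref{l:CommBCD}, existence by regularisation, uniqueness and time continuity) that is used in \cite{BCD} to prove this result. Note, however, that the paper does not give its own proof of Theorem \ref{th:transport}: the theorem is stated in the appendix as a quotation of Theorem 3.19 in \cite{BCD}, so there is nothing to compare against beyond the reference itself.
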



{\small

}

\end{document}